\documentclass[a4paper, 11pt]{article} 
\usepackage[margin=2.3cm, includefoot, footskip=30pt]{geometry}

\usepackage[T1]{fontenc}

\usepackage{lmodern}

\usepackage{amssymb, mathtools, stmaryrd}
\usepackage{graphicx}

\usepackage{pgfplots}
\pgfplotsset{compat=1.11}
\usepackage{mathrsfs}
\usepackage{mathtools}
\usepackage{dsfont}
\usepackage{amsthm}
\usepackage{enumerate}
\usepackage{color}
\usepackage{verbatim}

\usepackage{array}

\usepackage{amssymb, amsopn, amscd}

\usepackage{microtype}

\usepackage{babel}

\usepackage{xcolor}
\usepackage[colorlinks]{hyperref}
\hypersetup{final}
\hypersetup{
    colorlinks,
    linkcolor={blue!40!black},
    citecolor={blue!40!black},
    urlcolor={blue!80!black}
}
\usepackage{nameref}
\usepackage[all]{hypcap}

\theoremstyle{plain}
\newtheorem{theorem}{Theorem}[section]
\newtheorem{corollary}[theorem]{Corollary}
\newtheorem{proposition}[theorem]{Proposition}
\newtheorem{lemma}[theorem]{Lemma}
\newtheorem{remark}[theorem]{Remark}
\newtheorem{definition}[theorem]{Definition}

\newtheorem{assumption}{Assumption}

\newtheorem{theoremI}{Theorem}

\newcommand{\backward}{\mathrm{Pend}}

\newcommand{\trace}{\mathrm{Tr}}
\newcommand{\Int}{\mathrm{Int}}
\newcommand{\clInt}{\overline{\mathrm{Int}}}
\newcommand{\Ext}{\mathrm{Ext}}
\newcommand{\clExt}{\overline{\mathrm{Ext}}}
\newcommand{\D}{\mathcal D}

\newcommand{\E}{\mathbb{E}}

\newcommand{\R}{\mathbb{R}}

\newcommand{\N}{\mathcal{N}}
\newcommand{\Q}{\mathbb{Q}}
\newcommand{\Z}{\mathbb{Z}}

\newcommand{\1}{\mathds 1}

\renewcommand{\P}{\mathbb{P}}

\numberwithin{equation}{section}

\begin{document}

\title{Connected components and topological ends of stationary planar forests}
\author{Tom Garcia-Sanchez\footnote{\href{mailto:tom.garcia-sanchez@imt-nord-europe.fr}{tom.garcia-sanchez@imt-nord-europe.fr} -- Univ. Lille, CNRS, IMT Nord Europe, UMR 8524 - Laboratoire Paul Painlevé, F-59000 Lille, France}}

\maketitle

\begin{abstract}
    We study the topological structure of random geometric forests $G$ in the Euclidean plane under mild assumptions: non-crossing edges, stationarity, and finite edge intensity. The framework covers a broad range of constructions, including models based on stationary point processes as well as lattices, and encompasses many already well-studied examples among drainage networks, geodesic forests arising from first- and last-passage percolation, and minimal or uniform spanning trees. First, denoting by $N_k$ the number of $k$-ended connected components in $G$ for each $k\geq0$, we show that almost surely, all trees of $G$ have at most two topological ends, $N_0\in\{0,\infty\}$, $N_1\leq2$, and $N_1=2\implies N_2<\infty$. We then construct explicit examples realizing all possibilities compatible with these constraints, yielding a complete classification of the admissible topological structures for $G$. As a second result, we prove that under the additional assumptions that $G$ is non-empty, oriented, out-degree one, with all its directed paths going to infinity along a fixed deterministic direction, the situation reduces to a dichotomy: $G$ consists almost surely of either a unique one-ended tree, or infinitely many two-ended trees. The latter extends a theorem of Chaika and Krishnan (2019), who considered a lattice setting. Our proofs combine classical Burton--Keane type arguments with substantial new conceptual ideas using planar topology, resulting in a robust, unified approach. 
\end{abstract}

\noindent\textbf{MSC2020:} 60D05, 60K35.\\
\textbf{Keywords:} stochastic geometry, random forests, percolation, planar topology.

\section{Introduction}

\label{section_introduction}

Consider a \emph{random geometric forest} $G$ in the plane. Formally,
    \[G=(V,E):(\Omega,\mathcal F,\P)\to\mathcal G\]
is a random graph valued in the configuration space $\mathcal G$ of pairs $(V, E)$, where $V\subset\R^2$ is locally finite and $E\subset V\times V\setminus\{(v, v):v\in V\}$, endowed with the $\sigma$-field generated by the counting functions $(V, E)\mapsto\#(V\cap D)$ and $(V, E)\mapsto\#(E\cap H)$ for bounded Borel sets $D\subset\R^2$ and $H\subset\R^2\times\R^2$. We assume that $G$ is a forest in the sense that its underlying undirected graph is almost surely acyclic, and require that with probability one all vertices of $G$ have finite degree. Note that we allow directed $2$-cycles in $G$ to provide a unified treatment of the case where the graph is effectively undirected, with $(v, v')\in E\iff(v', v)\in E$. By contrast, we say that $G$ is \emph{oriented} if for all $(v, v')\in E$, $(v',v)\notin E$.

Throughout this paper, we focus on the undirected structure of $G$, while certain assumptions will require us to consider directed edges. We write $v\leftrightarrow v'$ for vertices $v,v'\in V$ if there exists an undirected path between them in $G$. The \emph{connected components} of $G$ are then defined as equivalence classes of vertices under this relation. For each connected component $C\subset V$, a \emph{topological end} is an equivalence class of infinite undirected simple paths in $C$ under the relation of \emph{coalescence}, that is, two such elements are equivalent if their vertex sets differ by only finitely many vertices. We then say that $C$ is $n$-ended, with $n\in\Z_+\cup\{\infty\}$, if the number of topological ends in $C$ is $n$.

A fundamental problem is to understand the topological structure of random graphs by determining, under general assumptions, the number of $n$-ended connected components they possess for each $n\in\Z_+\cup\{\infty\}$. Classical tools to address such questions include Burton–Keane type arguments~\cite{burton_and_keane}, various forms of mass-transport principles, and, in some cases, planarity. As an example, one may cite the influential work concerning group-invariant percolation on graphs by Benjamini, Lyons, Peres, and Schramm~\cite{BLPS}, which establishes several results under mild hypotheses. In this spirit, we study the problem in the setting of random forests. Since, without further restrictions, essentially any behavior is possible, we impose the following additional natural hypotheses throughout this work.

\begin{assumption}
    \label{assumptions}
    The random geometric forest $G$ satisfies the following conditions:
    \begin{itemize}
        \item\textbf{Planarity.} For any pair of edges $(u,u')$ and $(v, v')$ in $E$, the corresponding line segments in $\R^2$ can only intersect at a common extremity when they are distinct.
        \item\textbf{Stationarity and ergodicity.} The distribution of $G$ is stationary and ergodic with respect to translations of $\R^2$, i.e., there exists a measurable, measure-preserving, ergodic action $(\theta_s)_{s\in\R^2}$ on $\Omega$ such that $G\circ\theta_s=s+G$ for all $s\in\R^2$, where $s+G$ denotes a geometric graph translation.
        \item\textbf{Finite edge intensity.} The edge set of the graph admits a finite intensity in the sense that 
            \begin{equation}
                \label{eq_Lambda_def}
                \Lambda\coloneqq\E[\#\{(u, v)\in E:[u,v]\cap[0,1]^2\neq\emptyset\}]<\infty.
            \end{equation}
    \end{itemize}
\end{assumption}

\begin{remark}
    \label{remark_ergodicity_stationarity}
    The \emph{ergodicity} requirement may be relaxed by the ergodic decomposition theorem for stationary measures, up to treating the corresponding factors separately. Moreover, \emph{stationarity and ergodicity} may be taken with respect to $\Z^2$-translations: by enlarging the probability space and applying an independent uniform random shift in $[0,1]^2$ to the graph, one recovers the condition as originally formulated.
\end{remark}

These conditions, up to Remark~\ref{remark_ergodicity_stationarity}, hold for all standard examples, as will be illustrated later.

\subsection*{Main results}

Our first result provides a classification of the possible topological structures for $G$.

\begin{theoremI}
    \label{thm_main_undirected}
    With probability one, every connected component of $G$ is a tree with at most $2$ topological ends. Let $N_0, N_1$ and $N_2$ respectively denote the number of finite, one-ended and two-ended connected components of $G$, taking values in $\Z_+\cup\{\infty\}$. Then, $(N_0,N_1,N_2)$ is constant on a full-probability event, and the following holds almost surely:
        \begin{itemize}
            \item $N_0\in\{0,\infty\}$,
            \item $N_1\leq 2$, and
            \item $N_1=2\implies N_2<\infty$.
        \end{itemize}
    Moreover, this classification is optimal: every value for $(N_0,N_1,N_2)$ satisfying the above constraints is realized by some model on which the result applies.
\end{theoremI}

Note that the constancy of $(N_0,N_1,N_2)$ on a full-probability event is an immediate consequence of \emph{ergodicity} since this variable is invariant under translations. Beyond this, the main content of Theorem~\ref{thm_main_undirected} is the almost sure statements $N_1\leq2$ and $N_1=2\implies N_2<\infty$. The other assertions are not specific to the planar setting and extend directly to higher dimensions, as will be explained throughout the paper. More precisely, $N_0\in\{0,\infty\}$ a.s.\ is a straightforward consequence of \emph{stationarity} while the fact that with probability one all connected components have at most $2$ ends follows from a classical Burton–Keane type argument involving \emph{trifurcations}, relying on the combinatorial structure of forests together with \emph{stationarity} and \emph{finite edge intensity}. 

In contrast, the bound $N_1\leq2$ a.s.\ is achieved through a delicate Burton–Keane type argument, exploiting the planar structure and relying on the notion of \emph{finite pendant trees} (Definition~\ref{def_backward_tree}, Section~\ref{section_trifurcation}). 

The implication $N_1=2\implies N_2<\infty$ a.s.\ constitutes the most challenging part of the theorem. The proof we provide for this final piece, which enables the optimal classification once examples realizing all the cases are constructed, is, to our knowledge, entirely novel and represents a key conceptual contribution of our work. Roughly, we formalize the intuition that, thanks to \emph{planarity}, when $N_1=2$ the two one-ended components are intertwined so that the space between them looks like a bi-infinite \emph{corridor}, by introducing a notion of \emph{doors}. Using the planar structure, we show that the set of doors can be endowed with a suitable \emph{betweenness} relation induced by a total order isomorphic to $(\Z,<)$. Notably, this requires the development of a general deterministic topological result of independent interest (Theorem~\ref{thm_planar_ordering}, Section~\ref{section_doors}). Then we prove that \emph{stationarity} together with \emph{finite edge intensity} forces two‑ended components to cross every door while each of them can be crossed only finitely many times, yielding the desired conclusion.

As a second result, we establish a dichotomy under additional assumptions, which will be derived from Theorem~\ref{thm_main_undirected} via an elementary yet efficient argument.

\begin{theoremI}
    \label{thm_main_directed}
    Assume additionally that with probability one, $G$ is oriented with out-degree $1$, non-empty and all directed infinite path $(\pi_n)_{n\geq0}$ in $G$ satisfies $\lim_{n\to\infty}\pi_n\cdot e_2=+\infty$, where $e_2\coloneqq (0,1)\in\R^2$ and $\cdot$ denotes the standard scalar product. Then, exactly one of the following holds almost surely:
        \begin{itemize}
            \item $G$ is a single one-ended tree, or
            \item $G$ consists of infinitely many disjoint two-ended trees.
        \end{itemize}
\end{theoremI}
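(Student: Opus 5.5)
The plan is to derive Theorem~\ref{thm_main_directed} from Theorem~\ref{thm_main_undirected} by ruling out every configuration other than the two announced ones. The first step is to use the out-degree one assumption. From any vertex $v$, the forward path $v,\sigma(v),\sigma^2(v),\dots$ (with $\sigma(v)$ the unique out-neighbour) is infinite. Because $G$ is oriented and acyclic, this path never revisits a vertex, so it is an infinite simple path. Hence $G$ has no finite components, and $N_0=0$. Every component is then one- or two-ended by Theorem~\ref{thm_main_undirected}, and since $G$ is non-empty, $N_1+N_2\geq1$.

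The second step shows that in each component all forward paths coalesce. If $u\leftrightarrow v$, the undirected path joining them is a sequence of edges in a tree where every vertex has exactly one outgoing edge. Walking along it, the orientation pattern forces the path to consist of a forward segment from $u$ followed by a backward segment to $v$; equivalently, $\sigma^m(u)=\sigma^n(v)$ for some $m,n$. So every component $C$ carries a distinguished end: the class of its forward paths, along which $\pi_n\cdot e_2\to+\infty$. Any second end of $C$ is necessarily realised by a backward ray $v_0,v_1,\dots$ with $\sigma(v_{k+1})=v_k$.

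The third step, which is the crux, rules out $N_1=2$ and the mixed case $N_1=1$, $N_2\geq1$, and also rules out $N_1=0$ with $N_2<\infty$. In the end only $N_1=1,N_2=0$ or $N_1=0,N_2=\infty$ should survive.

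\begin{itemize}
\item \textbf{Finitely many components.} If the total number of components is finite but at least $2$, I would use planarity and the direction $e_2$. Two distinct components have disjoint forward rays, each going to $+\infty$ in the $e_2$ direction, and these rays are non-crossing. So the components can be ordered left to right, for instance by the order in which their forward rays cross a high horizontal line. This yields a translation-covariant choice of a leftmost component.
\item \textbf{Selecting a vertex.} Inside a leftmost component, stationarity plus finite edge intensity should let me select a unique point, contradicting stationarity through the standard ``no translation-invariant choice of finitely many points'' argument. Concretely, I would look at the lowest points of the boundary between the leftmost and next component, as seen from a fixed far-up region, and use a mass-transport or ergodic density argument.
\item \textbf{Simpler alternative.} It might be easier to use the trick that, when there are exactly $k$ components, each has a well-defined ``density'' of vertices in large boxes, and the left-right order is invariant. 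Then the indicator that the origin's nearest vertex lies in the leftmost component defines an invariant event whose probability must equal a positive number, which by ergodicity is constant. That alone is not a contradiction, so I prefer the boundary-selection argument.
\end{itemize}

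The fourth step handles the two-ended components. If $N_1\leq 2$ were combined with $1\leq N_2<\infty$, the finiteness of the total count is again contradicted by the previous step. If $N_1=1$ and $N_2=\infty$, the one-ended component $T$ would be sandwiched among infinitely many two-ended ones. Each two-ended component is a bi-infinite path $\gamma$ whose backward ray must go to $-\infty$ in $e_2$ direction. That ray cannot escape sideways forever: translation invariance together with finite edge intensity should force it downward, by an argument I would adapt from the doors idea. Then $\gamma$ separates the plane into a left side and a right side, so $T$ lies entirely on one side. Using the left-right ordering of these paths, $T$ would again be a translation-covariantly distinguished object, for example as the first component to the right of a designated set. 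Stationarity then contradicts uniqueness, unless the paths accumulate on only one side. That one-sided case can be excluded by symmetry of the argument with infinitely many paths on both sides.

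The main obstacle I expect is proving that a backward ray of a two-ended component actually goes downward, i.e.\ that it separates the plane. I would attempt it first via the mass-transport/door approach from the proof of $N_1=2\implies N_2<\infty$.
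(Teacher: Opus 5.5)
Steps 1--2 and the reduction via Theorem~\ref{thm_main_undirected} are fine. However, the exclusions that carry the content are not proved, and one case is missed.

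\textbf{The single two-ended tree.} Nothing in your plan rules out $N_1=0$, $N_2=1$. Step~3 only treats a finite total $\geq2$ of components, and Step~4 defers finite cases back to Step~3. No leftmost-component argument can catch this case, because a single one-ended tree is allowed; you must use the bi-infinite trunk.

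\textbf{No contradiction mechanism in Step~3.} A translation-covariant leftmost component is not contradictory in itself, as you note. Your point selection via a ``fixed far-up region'' destroys translation covariance. The interface between two components also need not have a lowest point, since by vertical stationarity every component reaches arbitrarily far down. Even the left-to-right order is not free: each component has one forward ray per vertex, and you must show that the rays of one component never cross a horizontal line on both sides of a ray of another.

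\textbf{Step~4 rests on an unproved claim.} It hinges on backward rays of two-ended components going to $-\infty$ in direction $e_2$. You do not prove this, and it is not needed. The subsequent ``distinguished object / one-sided accumulation'' reasoning is not an argument.

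\textbf{The missing idea} is an elementary Jordan-curve \emph{trapping} argument on the fixed line $\R\times\{0\}$. For $v\in V$ with $v\cdot e_2\leq0$, let $x_v$ be the abscissa of the last exit of its forward path from $\R\times\R_-$. Suppose $u,u'$ lie in the same component and $x_u<x_v<x_{u'}$. The post-exit forward paths of $u$ and $u'$ coalesce and, together with the segment joining their exit points, form a Jordan curve whose interior contains the points just above $(x_v,0)$. The post-exit path of $v$ stays in the open upper half-plane, so if $v\nleftrightarrow u$ it would be trapped in a bounded region. Hence exit abscissae of distinct components never interleave.

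The paper uses this through the event $A_k$ that some $u\in V_1^-$ and $v\in V^-$ with $-k<x_u<x_v<k$ are disconnected (plus its mirror image). Occurrences of $A_k$ at the translates by $2kie_1$, $i\in\Z$, yield pairwise distinct one-ended components. Therefore $2\geq\E[N_1]\geq\sum_{i\in\Z}\P(A_k)$, so $\P(A_k)=0$. With vertical stationarity this makes $G$ connected as soon as $N_1\geq1$. That disposes of $N_1=2$ and of all mixed cases at once, with no information on backward rays.

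When $N_1=0$, trunk vertices below the line exist by vertical stationarity. Distinct two-ended components have distinct exit abscissae of their bi-infinite trunks. By horizontal stationarity, the expected number of these abscissae in $[i,i+1)$ does not depend on $i$. Hence $N_2\in\{0,\infty\}$, which in particular excludes a single two-ended tree.

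Once non-interleaving is known, your boundary-point intuition can also be made rigorous along the fixed line. For example, for a finite total $\geq2$, the supremum of the exit abscissae of the leftmost component is a finite real $b$ with $b\circ\theta_{se_1}=b+s$, which stationarity forbids.
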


\begin{remark}
    \label{remark_extension_edge_curves}
    The proofs of both Theorems~\ref{thm_main_undirected} and \ref{thm_main_directed} are robust and extend directly to the setting where the line segments associated with the edges of $G$, appearing explicitly in the \emph{planarity} and \emph{finite edge intensity} conditions, are replaced by arbitrary, and possibly random, curves homeomorphic to $[0,1]$.
\end{remark}

It is important to note that the analogue of Theorem~\ref{thm_main_directed} for subgraphs of the $\Z^2$ lattice was previously established by Chaika and Krishnan~\cite{lattice_setting}. Their approach is, however, rather technical and exploits specific features of such setting, which limits its generalization. In fact, as they observe, their argument does not even directly extend to all rotations of the lattice, although they suggest this should be achievable with additional work. In contrast, the present work adopts a different, more conceptual perspective, allowing the result to be established seamlessly in considerably greater generality.

Similarly, some studies on specific models such as~\cite{competition_interface, coalescence_dsf}, following the seminal work of Licea and Newman~\cite{licea_and_newman}, all of which fall within the scope of Theorem~\ref{thm_main_directed} up to Remark~\ref{remark_ergodicity_stationarity}, proved that the graphs they consider are almost surely connected via a Burton–Keane type argument using \emph{planarity} and \emph{local modifications}. All these approaches are nonetheless intrinsically tied to the specificities of each model, and it remains an open problem to determine the minimal general assumptions required to distinguish between the alternatives of Theorems~\ref{thm_main_undirected} and~\ref{thm_main_directed}.

\subsection*{Higher dimensions}

While it is natural to ask what the classification of possible topological structures for $G$ becomes under the analogue of our assumptions beyond dimension $2$, the problem is, perhaps surprisingly, simpler. Indeed, in dimension $d\geq 3$, the \emph{planarity} assumption loses both meaning and rigidity, becoming a \emph{non-crossing property} that is essentially non-restrictive via a superposition principle. More precisely, if $G_1$ and $G_2$ are two independent random geometric graphs satisfying the analogues of our assumptions in dimension $d\geq3$, then their union also satisfies them. To see this, it suffices to verify that with probability $1$, no two line segments of $G_1$ and $G_2$ intersect. Since two line segments can only intersect if all four endpoints lie in a common affine plane, which has zero Lebesgue measure in dimension $d\geq3$, one can then check that this holds thanks to independence, \emph{stationarity}, and \emph{finite edge intensity} of $G_1$ and $G_2$. Furthermore, examples of a single one-ended tree and of a single two-ended tree satisfying the assumptions up to Remark~\ref{remark_ergodicity_stationarity} are known to exist in any dimension $d\geq3$, see~\cite{existence_single_one_ended, existence_single_two_ended} using that the $\Z^d$ lattice is amenable, unimodular, one-ended, and a Cayley graph. In light of previous remarks, combining these constructions with the superposition principle shows that for $d\geq3$, the classification becomes essentially trivial: each component has at most two ends a.s., the triple $(N_0,N_1,N_2)$ is constant on a full-probability event, $N_0\in\{0,\infty\}$ a.s., and examples exist realizing all possible values for $(N_0, N_1, N_2)$ respecting these constraints. In this sense, the additional exclusions of Theorem~\ref{thm_main_undirected} reflect the genuine rigidity of the planar setting.

Concerning the analogous question for Theorem~\ref{thm_main_directed}, what remains in dimension $d\geq3$ is that $N_2\in\{0,\infty\}$ almost surely, as will be explained later. Furthermore, examples of a single one-ended tree satisfying the analogues of the assumptions in Theorem~\ref{thm_main_directed} up to Remark~\ref{remark_ergodicity_stationarity} are known to exist in any dimension $d\geq3$, see the construction provided in~\cite{nearest_neighbor_graphs}. Therefore, by the same superposition principle, in dimension $d\geq3$, Theorem~\ref{thm_main_directed} takes the following form: almost surely, each component has at most $2$ ends, $N_0=0$, $N_2\in\{0,\infty\}$, and examples exist realizing all possible values for $(N_1, N_2)$ respecting these constraints.

Overall, our results complete the classification of possible topological structures for stationary random forests in all dimensions under mild assumptions.

\subsection*{An important application}

Among the examples covered by our setting is a particular class of oriented out-degree one forests constructed via deterministic translation-covariant rules on a point process, sometimes called \emph{drainage networks}. As expanded below, in this setting our results yield a new route for establishing coalescence, i.e., almost sure connectedness of the corresponding graphs. Formally, let $\N$ be a stationary and ergodic point process in $\R^2$ and define
    \[E_\Psi\coloneqq\{(v, v+\Psi[\mathcal{N}-v]):v\in\N\},\]
where $\Psi$ is a fixed deterministic measurable mapping such that the graph $G_\Psi\coloneq(\N,E_\Psi)$ is well defined and almost surely acyclic. Then, considering $G\coloneqq G_\Psi$, the assumptions of our setup are satisfied whenever the \emph{planarity} condition holds, $\N$ has a \emph{finite intensity} $\lambda\coloneqq\mathbb{E}[\#(\mathcal{N} \cap [0,1]^2)]$, and $G_\Psi$ admits a \emph{finite typical squared out-going edge length}, that is $\E^0[\|\Psi(\N)\|^2]<\infty$, where $\|\cdot\|$ is the standard Euclidean norm and $\E^0$ denotes the expectation with respect to the Palm measure of $\N$. Indeed, one can check that any line segment $[v, v+\Psi(\N-v)]$ with $v\in\N$ that intersects $[0,1]^2$ must satisfy $\|\Psi(\N-v)\|\geq\|v\|-\rho$ where $\rho\coloneqq\sup_{z\in[0,1]^2}\|z\|$, so that recalling the definition of $\Lambda$ in \eqref{eq_Lambda_def},
    \begin{equation*}
        \begin{split}
            \Lambda\leq\E\left[\sum_{v\in\N}\1_{\|\Psi(\N-v)\|\geq\|v\|-\rho}\right]
            &=\lambda\E^0\left[\int_{\R^2}\1_{\|\Psi(\N)\|\geq\|z\|-\rho}\mathrm dz\right]\\
            &=\lambda\E^0\left[\int_0^\infty 2\pi r\1_{\rho+\|\Psi(\N)\|\geq r}\mathrm dr\right]\\
            &=\lambda\pi\E^0\left[\int_0^{\rho+\|\Psi(\N)\|}2r\mathrm dr\right]
            =\lambda\pi\E^0[(\rho+\|\Psi(\N)\|)^2],
        \end{split}
    \end{equation*}
where the first equality follows from the Campbell-Mecke formula. Analogous considerations apply when the construction is based on a marked point process. 

Many works, such as \cite{coalescence_dicrete_drainage_network, discrete_dsf, poisson_trees, lattice_drainage, dsf_coal_vs_dim}, have investigated such drainage networks generalized in arbitrary dimension $d\geq2$. In dimension $d=2$, these models satisfy the assumptions of Theorem~\ref{thm_main_directed} up to Remark~\ref{remark_ergodicity_stationarity}. The main objective of these studies is to establish coalescence almost surely if and only if $d\leq3$, which involves a delicate recurrence analysis of suitable random walks. Several of these papers then show separately that, regardless of dimension, there are almost surely no bi-infinite paths, i.e., every connected component has a single end. While this two-step strategy is natural and effective in arbitrary dimensions, Theorem~\ref{thm_main_directed} shows that in the planar setting it is sufficient to focus solely on either coalescence or absence of bi-infinite paths. In practice, although a unified proof is difficult due to dependencies on model-specific features, ruling out bi-infinite paths is often straightforward regardless of the dimension: in many instances, one can argue by contradiction that if bi-infinite paths existed, a local modification would allow two of them to merge, producing a three-ended component with positive probability, which is impossible by our results. In fact, the alternative behavior described in Theorem~\ref{thm_main_directed}, in which bi-infinite paths exist, can appear pathological, and it is natural to ask whether one can construct an example in which it arises spontaneously, without being explicitly designed.

\subsection*{Other related works}

Beyond drainage networks, our framework encompasses a broad class of constructions, including already well-studied examples ranging from \emph{uniform and minimal spanning trees}~\cite{uniform_spanning_forest, minimal_spanning_forest} to directed geodesic forests arising in first- and last-passage percolation models~\cite{competition_interface, licea_and_newman, euclidean_fpp}\nocite{coalescence_dsf}, both on suitable $\Z^2$ lattice setups and in generalizations or variants based on stationary point processes in $\R^2$. We also mention that the study of random forests has generated a substantial literature, especially in the so-called unimodular setting: see~\cite{unimodular_classification, unimodular_random_trees} for some examples related to the present work.

\subsection*{Structure of the paper}

In Section~\ref{section_trifurcation}, we implement a classical Burton–Keane argument based on trifurcations to show that, almost surely, the connected components of $G$ have at most two ends, and then establish $N_1\leq 2$ almost surely with more delicate considerations involving \emph{planarity} and \emph{finite pendant trees} that we define. In Section~\ref{section_protected_paths}, we derive Theorem~\ref{thm_main_directed} via an efficient elementary proof. In Section~\ref{section_examples}, we construct examples demonstrating the optimality statement of Theorem~\ref{thm_main_undirected}. Finally, in Section~\ref{section_doors}, we prove the key implication $N_1=2\implies N_2<\infty$ almost surely. The argument relies on a general deterministic topological result (Theorem~\ref{thm_planar_ordering}) whose proof is deferred to the concluding Section~\ref{section_planar_ordering}.

\section{Trifurcations}

\label{section_trifurcation}

The goal of this section is to show that with probability one, all components of $G$ have at most two ends, and $N_1\leq 2$. Throughout this work, $\Omega_0$ denotes a full-probability event that may change from line to line on which all almost sure properties encountered hold, starting with the assumptions on $G$. The two results of this section are obtained through different applications of the classical Burton–Keane argument with \emph{trifurcations} formalized in the following lemma and subsequent proposition. Recall that $(\theta_s)_{s\in\R^2}$ denote the shift operators on $\Omega$ mentioned in Assumption~\ref{assumptions}.

\begin{lemma}
    \label{lemma_box_crossings}
    For any integer $n\geq1$, define the random variable that counts the number of edges crossing $\partial([0,n]^2)$ by
        \[\chi_n\coloneqq\#\{(u,v)\in E:[u,v]\cap\partial([0,n]^2)\neq\emptyset\}.\]
    Then $\E[\chi_n]\leq 4\Lambda n$.
\end{lemma}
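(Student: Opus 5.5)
The plan is to cover the boundary $\partial([0,n]^2)$ by unit squares and then use a union bound, stationarity, and the definition of $\Lambda$ in \eqref{eq_Lambda_def}. The boundary consists of four sides of length $n$. Each side is contained in the union of $n$ unit squares of the form $k+[0,1]^2$ with $k\in\Z^2$, for example along the bottom side $k=(i,0)$ for $i=0,\dots,n-1$. The top side requires care: it lies at height $n$, so I would use the squares $(i,n-1)+[0,1]^2$, whose top edges lie on that side. The left and right sides are handled the same way with $(0,j)$ and $(n-1,j)$. This gives a family of at most $4n$ unit squares $Q_1,\dots,Q_m$, $m\leq 4n$, whose union contains $\partial([0,n]^2)$. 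Repeats at the corners can simply be counted with multiplicity.

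Next I would bound the counting variable. Any edge $(u,v)$ with $[u,v]\cap\partial([0,n]^2)\neq\emptyset$ meets some $Q_i$. Hence
\[
\chi_n\leq\sum_{i=1}^m\#\{(u,v)\in E:[u,v]\cap Q_i\neq\emptyset\},
\]
since each such edge is counted at least once on the right.

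Then I would take expectations and use stationarity. For $Q_i=k_i+[0,1]^2$, the count $\#\{(u,v)\in E:[u,v]\cap(k_i+[0,1]^2)\neq\emptyset\}$ is the count for $[0,1]^2$ evaluated on the translated graph $G-k_i$. By the stationarity assumption, $G-k_i=G\circ\theta_{-k_i}$, and $G-k_i$ has the same law as $G$. Therefore each term has expectation exactly $\Lambda$, and summing gives $\E[\chi_n]\leq m\Lambda\leq 4\Lambda n$.

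I do not expect a real obstacle. The main point needing care is the covering of the top and right sides, so that exactly $n$ squares are used per side and the constant $4$ comes out. One also needs the measurability of the counts, which follows from the $\sigma$-field on $\mathcal G$ because the edge set $H=\{(u,v):[u,v]\cap D\neq\emptyset\}$ is Borel. The sum is a sum of nonnegative variables, so no integrability issues arise.
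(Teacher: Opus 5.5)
Your proposal is correct and follows essentially the same route as the paper: cover $\partial([0,n]^2)$ by $4n$ translated unit squares, bound $\chi_n$ by the sum of the corresponding translated counts $X\circ\theta_{-k}$, and take expectations using stationarity. Your use of the squares $(i,n-1)+[0,1]^2$ for the top side (and similarly for the right side) differs only cosmetically from the paper's choice of $(i,n)+[0,1]^2$ and $(n,i)+[0,1]^2$, which lie just outside the box; both choices cover the boundary.
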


\begin{proof}
    Fix $n\geq1$. Observe that one can cover $\partial([0,n]^2)$ with $4n$ translations of $[0,1]^2$ by writing $\partial([0,n]^2)\subset\bigcup_{i=0}^{n-1}\{(i, 0),(i, n), (0,i), (n, i)\}+[0,1]^2$, where $+$ denotes the Minkowski sum of sets. Then, denoting $X\coloneq\#\{(u,v)\in E:[u,v]\cap [0,1]^2\neq\emptyset\}$, it comes
        \[\chi_n\leq\sum_{i=0}^{n-1}[X\circ\theta_{-(i,0)}+X\circ\theta_{-(i,n)}+X\circ\theta_{-(0,i)}+X\circ\theta_{-(n,i)}].\]
    Finally, taking the expectation using \emph{stationarity} and the \emph{finite edge intensity} assumptions, one obtains $\E[\chi_n]\leq4n\E[X]=4\Lambda n$, as desired.
\end{proof}

\begin{proposition}[Burton--Keane argument]
    \label{prop_burton_and_keane}
    Fix an event $T\in\mathcal F$, a collection of centers $(c_{i, j})_{i, j\geq1}\in\Z^{\Z_{\geq 0}\times\Z_{\geq 0}}$ and consider for each $n\geq 1$ the random variable $\eta_n$ defined for all $\omega\in\Omega$ by
        \[\eta_n(\omega)\coloneqq\sum_{i=0}^{n-1}\sum_{j=0}^{n-1}\1_{\theta_{-c_{i,j}}(\omega)\in T}.\]
    Assume that there exists an integer $r\geq1$ such that $\chi_{rn}\geq\eta_n$ a.s.\ for all $n\geq 1$. Then, $\P(T)=0$.
\end{proposition}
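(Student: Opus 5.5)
The plan is to take expectations on both sides of $\chi_{rn}\geq\eta_n$ and compare growth rates: the left side grows linearly in $n$ by Lemma~\ref{lemma_box_crossings}, while the right side grows quadratically unless $\P(T)=0$.

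\textbf{Lower bound.} By linearity of expectation and the fact that each $\theta_{-c_{i,j}}$ preserves $\P$ (the \emph{stationarity} assumption), we get
\[
\E[\eta_n]=\sum_{i=0}^{n-1}\sum_{j=0}^{n-1}\P\bigl(\theta_{-c_{i,j}}^{-1}(T)\bigr)=n^2\,\P(T).
\]
Here $\{\omega:\theta_{-c_{i,j}}(\omega)\in T\}=\theta_{-c_{i,j}}^{-1}(T)$, and this set has probability $\P(T)$ by measure preservation. Measurability of $\eta_n$ follows from the measurability of the action.

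\textbf{Upper bound.} Since $\chi_{rn}\geq\eta_n$ almost surely, taking expectations and applying Lemma~\ref{lemma_box_crossings} with $rn$ in place of $n$ gives
\[
n^2\,\P(T)=\E[\eta_n]\leq\E[\chi_{rn}]\leq 4\Lambda r n .
\]

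\textbf{Conclusion.} Dividing by $n^2$ yields $\P(T)\leq 4\Lambda r/n$ for every $n\geq1$. Since $\Lambda<\infty$ by \emph{finite edge intensity}, letting $n\to\infty$ gives $\P(T)=0$.

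There is no genuine obstacle in this argument. The only points that need care are confirming that each shifted event has probability exactly $\P(T)$, which is immediate from measure preservation, and that $\Lambda$ is finite; ergodicity is not needed. The real work is deferred to the later applications, where one must build $T$ (trifurcation-type events) and the centers $c_{i,j}$ so that the deterministic inequality $\chi_{rn}\geq\eta_n$ actually holds. This is typically done with a forest-counting argument: distinct trifurcations inside the box force distinct edges crossing its boundary.
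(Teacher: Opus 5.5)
Your proposal is correct and follows the paper's proof exactly: stationarity gives $\E[\eta_n]=n^2\P(T)$, then the almost sure inequality together with Lemma~\ref{lemma_box_crossings} gives $\P(T)\leq 4\Lambda r/n$, and you let $n\to\infty$. Your side remarks also hold: only measure preservation is used, and ergodicity is not needed for this step.
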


\begin{proof}
    Fix $n\geq1$. By definition of $\eta_n$ and \emph{stationarity}, $\E[\eta_n]=n^2\P(T)$. Additionally, since $\chi_{rn}\geq\eta_n$ almost surely, one has in particular $\E[\chi_{rn}]\geq\E[\eta_n]$. Together, this implies
        \[\P(T)=\frac{\E[\eta_n]}{n^2}\leq\frac{\E[\chi_{rn}]}{n^2}\leq\frac{4\Lambda r}{n},\]
    where the last inequality follows from Lemma~\ref{lemma_box_crossings}. Letting $n\to\infty$ yields $\P(T)=0$, as desired.
\end{proof}

Informally, in Proposition~\ref{prop_burton_and_keane}, $T$ is typically called a \emph{trifurcation} event, and the random variables $(\eta_n)_{n\geq1}$ count suitably translated occurrences of $T$ that induce a.s.\ at least as many edges crossing the boundaries of boxes of side lengths $(rn)_{n\geq1}$. In what follows we will consider two different possibilities for such an event, namely those defined in \eqref{eq_trifurcation_1} and \eqref{eq_trifurcation_2}.

Now, for each $v\in V$, denote by $\deg_\infty(v)$ the number of vertices $u\in V$ that are neighbors of $v$, i.e.\ $(v,u)\in E$ or $(u, v)\in E$, and such that there exists an infinite simple undirected path starting at $v$ and passing through $u$. The following result is a standard application of the Burton--Keane argument.

\begin{proposition}
    \label{prop_at_most_2_ends}
    Almost surely, for all $v\in V$, $\deg_\infty(v)\leq2$. In particular, with probability one, the number of ends of any connected component of $G$ is at most $2$.
\end{proposition}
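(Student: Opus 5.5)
We argue by contradiction, following the classical Burton--Keane scheme through Proposition~\ref{prop_burton_and_keane}. Call $v\in V$ a \emph{trifurcation} if $\deg_\infty(v)\geq3$. If, with positive probability, some vertex is a trifurcation, then by countable additivity some unit box contains one with positive probability. We then upgrade this to a \emph{local} event. Define $T_{R}$ as the event that some vertex $v\in[0,1]^2$ has three distinct neighbours $u_1,u_2,u_3$ such that
\begin{itemize}
\item removing $v$ leaves three disjoint subtrees $S_1,S_2,S_3$ containing $u_1,u_2,u_3$, each infinite,
\item all of $v,u_1,u_2,u_3$ lie in $[-R,R+1]^2$.
\end{itemize}
Since vertices have finite degree, $\P(T_R)>0$ for $R$ large. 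We fix such an $R$ and take $r\coloneqq 2R+2$, say. The centers $c_{i,j}$ are chosen so that the translated boxes $c_{i,j}+[-R,R+1]^2$ are disjoint and lie inside $[0,rn]^2$; for instance $c_{i,j}=(R+ri,\,R+rj)$, which fits because $r$ exceeds the box width.

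The key step is the deterministic combinatorial bound $\chi_{rn}\geq\eta_n$. Let $k=\eta_n$ be the number of translated occurrences of $T_R$, and pick one trifurcation vertex $v_\ell$ in each occurring box. Each $v_\ell$ lies in $[0,rn]^2$, and each of its three branches is infinite, so every branch must leave $[0,rn]^2$. Following a branch outward produces an edge crossing $\partial([0,rn]^2)$.

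To count these crossings, we use the standard forest argument. Consider the finite subforest $F$ spanned by the $v_\ell$ together with, for each branch, a path from $v_\ell$ to the first edge crossing the boundary. These crossing edges are the ``exits''. In a forest, a set of $k$ vertices of degree at least $3$ forces at least $k+2$ leaves per component. More precisely, we contract each tree of $F$ so that its leaves are exits. Then the number of leaves is at least $2+\sum_\ell(\deg-2)\geq k+2$ per component containing at least one $v_\ell$. We also need the exits to be distinct edges. For that, the paths in $F$ should be cut at their first crossing edge, so distinct leaves correspond to distinct crossing edges; acyclicity guarantees these are distinct. Summing over components gives $\chi_{rn}\geq k$. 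Directed $2$-cycles pose no problem: we work throughout with the underlying undirected simple forest and count each geometric edge once, possibly undercounting $\chi$ only by the factor of orientation, which is harmless.

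Proposition~\ref{prop_burton_and_keane} then gives $\P(T_R)=0$ for every $R$, a contradiction. Hence almost surely $\deg_\infty(v)\leq2$ for all $v$.

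For the ends statement, take a component $C$ with at least three ends. Pick three pairwise non-coalescing rays. In a tree, there is a vertex $v$ (a branching point of the three rays, e.g.\ the median of their starting points after truncation) from which three disjoint rays leave through distinct neighbours. That vertex has $\deg_\infty(v)\geq3$, which is excluded.

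The main obstacle I expect is making the leaf-counting rigorous. It must handle trifurcations whose branches re-enter the box, and possibly several $v_\ell$ in the same component sharing branches. I would handle this by working only with the finite tree obtained by truncating every branch at its first exit edge and applying the degree-sum identity $\#\text{leaves}=2+\sum_{\deg\geq3}(\deg-2)$ for finite trees.
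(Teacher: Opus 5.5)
Your proof is correct and takes essentially the same Burton--Keane route as the paper: you apply Proposition~\ref{prop_burton_and_keane} after bounding $\chi$ from below by the number of leaves of a finite sub-forest whose branch points include every trifurcation in the box, via the degree-sum identity. The differences are cosmetic. The paper builds the finite forest from the edges of the leafless core $G_\infty$ (vertices with $\deg_\infty\geq2$) that meet $[0,n]^2$, so no rays need to be chosen, and it works directly with $T=\{\exists u\in[0,1)^2:\deg_\infty(u)\geq3\}$, $r=1$ and unit-spaced centers; your localization to $T_R$ with spaced boxes is harmless but unnecessary.
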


\begin{proof}
    Fix $\omega\in\Omega_0$. Consider the sub-graph $G_\infty$ of $G$ induced by the set of vertices 
        \[V_\infty\coloneqq\{v\in V:\deg_\infty(v)\geq2\},\]
    and denote its edge set by $E_\infty$. By construction, no vertex of this graph has degree less than $2$, so it forms a forest without any leaves. Now fix $n\geq 1$ and denote by $G_\infty^n$ the sub-graph of $G_\infty$ induced by the set of vertices
        \[V_\infty^n\coloneqq\bigcup_{(u, v)\in E_\infty^n}\{u,v\}\quad\text{where}\quad E_\infty^n\coloneqq\{(u,v)\in E_\infty:[u,v]\cap[0,n]^2\neq\emptyset\}.\]
    By construction, $G_\infty^n$ has no isolated vertices, and it forms a finite forest since $\#E_\infty^n<\infty$ from \emph{finite edge intensity} combined with \emph{stationarity}. Next, observe that if $v\in V\cap[0,n]^2$, then all its adjacent edges must cross $[0,n]^2$, so if $\deg_\infty(v)\geq2$, then $v$ is an internal vertex of $G_\infty^n$ with degree $\deg_\infty(v)$. Therefore, if $v\in V$ is a leaf of $G_\infty^n$, then $v\notin[0,n]^2$ and, since $v$ is not isolated, there exists a unique $u\in[0,n]^2$ such that $(u,v)$ or $(v,u)$ is an edge of $E_\infty^n$, which must cross $\partial([0,n]^2)$. This shows that the number of edges of $G_\infty^n$ crossing $\partial([0,n]^2)$ is larger than its number of leaves. As it is a classical consequence of the degree-sum formula that, in a finite forest, the leaf count is larger than the number of vertices with degree at least 3, it follows
        \[\chi_n\geq\#\{v\in[0,n]^2:\deg_\infty(v)\geq 3\}.\]
    Now, consider the random variables $(\eta_n)_{n\geq 1}$ associated to the event 
        \begin{equation}
            \label{eq_trifurcation_1}
            T\coloneqq\{\exists u\in[0,1)^2:\deg_\infty(u)\geq3\}
        \end{equation}
    and the centers $(c_{i, j})_{i,j\geq1}\coloneqq[(i, j)]_{i, j\geq1}$ by Proposition~\ref{prop_burton_and_keane}. By construction, for all $n\geq 1$, $\eta_n\leq\#\{v\in[0,n]^2:\deg_\infty(v)\geq 3\}$ and thus $\eta_n\leq\chi_n$, which implies $\P(T)=0$ from Proposition~\ref{prop_burton_and_keane}. It follows by \emph{stationarity} that with probability one, every vertex $v\in V$ satisfies $\deg_\infty(v)\leq 2$, and since $G$ is a.s.\ a forest, none of its connected components can have more than two ends.
\end{proof}

\begin{remark}
    The proof of Proposition~\ref{prop_at_most_2_ends} does not rely on \emph{planarity} and generalizes directly to all dimensions $d\geq2$, for stationary forests in $\R^d$ with finite edge intensity.
\end{remark}

We now turn to the more subtle problem of proving $N_1\leq 2$ almost surely. To that end, we need to introduce some definitions. For any compact $K\subset\R^2$, let $N_1(K)$ denote the number of one-ended connected components that have a vertex in $K$. Define $V_1$ to be the set of vertices that belong to a one-ended component of $G$. Intuitively, the event $\{N_1(K)\geq3\}$, where $K$ is a large enough box, will be our starting point to construct a suitable trifurcation event. In order to exploit the \emph{planarity} assumption there and throughout, we will consider the continuous paths induced by the graph, together with their associated first‑intersection and last‑exit times, defined formally below, as well as the Jordan curve theorem, recalled thereafter with the relevant notation.

\begin{definition}[Continuous infinite path]
    \label{def_continuous_infinite_path}
    For any vertex $v\in V$ such that either $v\in V_1$ or $G$ is oriented with out-degree $1$, let $\pi_v:\R_+\to\R^2$ denote the continuous function obtained by linear interpolation from the unique infinite simple or directed path $\Z_+\to V$ starting at $v$, respectively. Then, for any closed set $F\subset\R^2$, define
        \[t_u^-(F)\coloneqq\inf\{t\geq0:\pi_u(t)\in F\}\quad\text{and}\quad t_u^+(F)\coloneqq\sup\{t\geq0:\pi_u(t)\in F\}\]
    with the convention $\sup\emptyset=-\infty$ and $\inf\emptyset=+\infty$
\end{definition}

\begin{definition}[Jordan curve]
    A subset $J\subset\R^2$ is called a Jordan curve if there exists a continuous bijection $\mathbb S^1\to J$. By the Jordan curve theorem (see \cite{topology_book}), $\R^2\setminus J$ then consists of exactly two open connected components, one bounded and one unbounded, denoted $\Int J$ and $\Ext J$, respectively, both having boundary $J$. The closures of $\Int J$ and $\Ext J$ are denoted by $\clInt J$ and $\clExt J$, respectively.
\end{definition}

We next introduce the notion of \emph{finite pendant trees}, whose key properties are summarized in the subsequent lemma. These objects will be used to define a suitable trifurcation event here, and will prove useful throughout.

\begin{definition}[Finite pendant trees]
    \label{def_backward_tree}
    For any $v\in V$, define the \emph{finite pendant tree} of $v$ as the sub-tree of $G$ induced by the set $\backward(v)$ of vertices $s\in V$ such that $s\leftrightarrow v$ and every infinite undirected simple path starting at $s$ passes through $v$. Additionally, for any compact $K\subset\R^2$, set
        \[\backward(K)\coloneqq\bigcup_{\substack{(u,v)\in E\\ [u,v]\cap K\neq\emptyset}}\backward(v).\]
\end{definition}

\begin{lemma}
    \label{lemma_backward_tree}
    Almost surely, $\backward(v)$ is finite for each $v\in V$. In particular, for every compact set $K\subset \R^2$, $\backward(K)$ is a.s.\ finite. Furthermore, for any compact set $K\subset\R^2$ and vertex $v\in V_1\setminus\backward(K)$, $\pi_v$ avoids $K$.
\end{lemma}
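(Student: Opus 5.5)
The plan is to prove the three assertions in order, working on a full-probability event $\Omega_0$ on which $G$ is a forest with all vertex degrees finite.

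\textbf{Finiteness of $\backward(v)$.} This will be a deterministic consequence of local finiteness and König's lemma.

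First I will check that $\backward(v)$ induces a connected subtree containing $v$. Note that $v\in\backward(v)$ trivially. Take $s\in\backward(v)$ and a vertex $w$ on the tree path from $s$ to $v$, and suppose some infinite simple path $\gamma$ from $w$ avoids $v$. The segment from $s$ to $w$ also avoids $v$, since it is part of the geodesic from $s$ to $v$ stopped before $v$. The simple path from $s$ to the tail of $\gamma$ lies in the union of this segment and $\gamma$. It is therefore an infinite simple path from $s$ avoiding $v$, which contradicts $s\in\backward(v)$. So every $w$ on the path lies in $\backward(v)$.

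Now suppose $\backward(v)$ is infinite. It is then an infinite, connected, locally finite tree, so König's lemma gives an infinite simple path $v=x_0,x_1,x_2,\dots$ inside $\backward(v)$. The tail $x_1,x_2,\dots$ is an infinite simple path starting at $x_1\in\backward(v)$ that never visits $v$. This is a contradiction, so $\backward(v)$ is finite.

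\textbf{Finiteness of $\backward(K)$.} Cover $K$ by finitely many unit translates of $[0,1]^2$. By stationarity and finite edge intensity, each translate meets finitely many edges almost surely. Hence only finitely many edges $(u,v)$ satisfy $[u,v]\cap K\neq\emptyset$ almost surely (after intersecting $\Omega_0$ over a countable family of boxes). Then $\backward(K)$ is a finite union of finite sets, so it is finite.

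\textbf{Avoidance of $K$.} The key observation is that in a one-ended tree, each vertex $v$ has a unique infinite simple path starting from it. Indeed, two distinct such rays would split at some vertex and then never meet again, since the graph is a tree, and so would give two distinct ends.

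Let $v\in V_1$ and suppose $\pi_v$ meets $K$. Then some edge $(u,w)\in E$ with $[u,w]\cap K\neq\emptyset$ is traversed by the ray from $v$, so both $u$ and $w$ lie on that ray. By uniqueness, every infinite simple path from $v$ is this ray, so it passes through $w$, the head of the edge. Hence $v\in\backward(w)\subset\backward(K)$.

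Taking the contrapositive gives the claim: if $v\notin\backward(K)$, then $\pi_v$ avoids $K$. The only delicate points are the connectedness check needed to apply König's lemma and the uniqueness of rays in one-ended trees; neither is a serious obstacle.
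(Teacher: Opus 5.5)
Your proposal is correct and follows essentially the same route as the paper: König's lemma with local finiteness for $\backward(v)$, finite edge intensity plus stationarity for $\backward(K)$, and uniqueness of the ray from $v$ in a one-ended component for the avoidance claim (the paper phrases this as $v\in\backward(\pi_v(n))$ for all $n\geq 0$). You also make explicit several points the paper leaves implicit: the connectedness of $\backward(v)$ needed for König's lemma, the uniqueness of rays in one-ended trees, the head/tail orientation of the edge meeting $K$, and the countable intersection that makes the finiteness statement hold simultaneously for all compact $K$.
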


\begin{proof}
    Fix $\omega\in\Omega_0$. For each $v\in V$, since by definition $\backward(v)$ induces a tree that can not contain any infinite branch, König's lemma ensures it is finite since vertices of $G$ have finite degrees. Then, for each compact $K\subset\R^2$, since $\{(u,v)\in E:[u,v]\cap K\neq\emptyset\}$ is almost surely finite from the \emph{finite edge intensity} condition combined with \emph{stationarity}, then $\backward(K)$ is a.s.\ finite. Now fix a compact $K\subset\R^2$ and a vertex $v\in V_1\setminus\backward(K)$. Since $v$ belongs to a one-ended component, then for all $n\geq0$, $v\in\backward[\pi_v(n)]$. In particular, since $v\notin\backward(K)$, for all $n\geq 0$ one must have $[\pi_v(n),\pi_v(n+1)]\cap K=\emptyset$. Hence, $\pi_v$ avoids $K$, which concludes the proof.
\end{proof}

We can now use all of these definitions to conclude the section.

\begin{proposition}
    \label{prop_N1_leq_2}
    Almost surely, $G$ does not contain more than $2$ one-ended connected components.
\end{proposition}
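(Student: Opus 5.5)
We argue by contradiction, assuming $N_1\geq 3$ with positive probability, hence almost surely by ergodicity. The aim is to build a trifurcation event $T$ to which Proposition~\ref{prop_burton_and_keane} applies, with finite pendant trees playing the role that trifurcation vertices played in Proposition~\ref{prop_at_most_2_ends}.

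\textbf{Step 1: localization.} Since $N_1\geq3$ a.s., for $m$ large enough we have $\P(N_1([0,m]^2)\geq 3)>1/2$. Using Lemma~\ref{lemma_backward_tree}, the set $\backward([0,m]^2)$ is a.s. finite. So there is $r\geq m$ such that, with probability bounded below, $\backward([0,m]^2)\subset[0,r)^2$ and three distinct one-ended components meet $[0,m]^2$. For each such component we pick a vertex $v_i$ in $[0,m]^2$. The ray $\pi_{v_i}$ eventually leaves every box, and its vertices lying outside $\backward([0,m]^2)$ never return to $[0,m]^2$. Let $T$ be this event, translated so that the box has size $r$, and take centers $c_{i,j}=(ri,rj)$, so that the copies of $[0,r)^2$ tile $[0,rn)^2$.

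\textbf{Step 2: counting crossings.} On $\bigcap\{\theta_{-c_{i,j}}\in T\}$ over the relevant $(i,j)$ we need $\chi_{rn}\geq \eta_n$, or at least $\geq c\,\eta_n$ after adjusting constants; the constant can be absorbed by adapting the proof of Proposition~\ref{prop_burton_and_keane}. For each occurrence we get three disjoint one-ended rays issued from the small box. Since components have at most one end here, the rays from different occurrences either coalesce or lie in distinct components. Following the forest argument, consider the finite graph obtained by taking the union, over occurrences, of the rays up to their first exit from $[0,rn]^2$. Any two rays in the same one-ended component coalesce, so each one-ended component meeting $[0,rn]^2$ induces a finite tree whose leaves inside the box come from occurrences. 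The exits through $\partial([0,rn]^2)$ must then be counted.

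\textbf{Step 3: the planar part, which is the main obstacle.} Without planarity this fails: a single one-ended component could absorb all occurrences and exit only once. This is exactly where planarity must be used. For an occurrence $k$ with components $C_1,C_2,C_3$, the three rays from the box go to infinity without re-entering it. By the Jordan curve theorem, they split the exterior of the box into three unbounded sectors, and one of the three components, say the middle one $C_2$, is trapped in a sector bounded by the rays of $C_1$ and $C_3$ plus a piece of the box.

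I would then define the trifurcation point as the pair consisting of the occurrence and the component $C_2$ in its trapped sector. The key claim is that distinct occurrences whose trapped components lie in a given region force distinct boundary crossings. Concretely, I would assign to each occurrence a ray segment, from its box to $\partial([0,rn]^2)$, lying in its own sector, and show that these segments are edge-disjoint. The argument is that the regions delimited by different occurrences are nested or disjoint, as in a planar tree, so the leaf-counting bound ``leaves $\geq$ branch points'' from Proposition~\ref{prop_at_most_2_ends} applies to a planar dual tree whose nodes are sectors.

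Making this nesting argument rigorous is the hard step. I would first try to handle it by choosing, in each occurrence, the middle component in a canonical way via the cyclic order of exit points on the box boundary. I would then apply the leaf/branch counting to the tree formed by the unions of the rays, whose vertices of degree at least three are precisely the occurrences. This gives $\chi_{rn}\geq\eta_n$, and Proposition~\ref{prop_burton_and_keane} yields $\P(T)=0$, a contradiction.
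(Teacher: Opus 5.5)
Your outer framework is the paper's: three one-ended components meeting a small box whose pendant trees stay in a larger box, a tiling by disjoint copies, and Proposition~\ref{prop_burton_and_keane}. However, the step carrying all the content, $\chi_{rn}\geq\eta_n$, is not proved, and both mechanisms you propose for it fail as stated.

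First, three rays in cyclic order have no distinguished middle. Each lies between the other two, and no component is ``trapped''. The paper gets a canonical choice only by cutting $\partial Q$, with $Q=[0,rn]^2$, at a fixed base point. For each occurrence it then takes the ray whose first hitting point on $\partial Q$ is the median of the three in this \emph{linear} order.

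Second, leaf/branch counting cannot be run on the union of the rays. The three rays of an occurrence lie in different components and share no vertex. Gluing them through the box creates cycles whenever two occurrences have rays in two common components that coalesce inside $Q$. Nothing prevents this, and when $N_1=3$ every occurrence uses the same three components. The inequality from Proposition~\ref{prop_at_most_2_ends} rests on acyclicity and fails for such a glued graph. Its combinatorics alone allow arbitrarily many occurrences to funnel into three exit points, exactly as in the non-planar scenario you describe. A ``dual tree of sectors'' could be made to work, but only after proving the separation statement below, which is the real content.

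The missing idea is a separation statement: for distinct occurrences $A\neq B$, the three exit points of $B$ on $\partial Q$ all lie in one of the three closed arcs of $\partial Q$ cut out by the exit points of $A$. The exit points of $B$ are pairwise distinct, since rays in distinct components are disjoint. Hence the median of $B$ differs from that of $A$. So distinct occurrences have distinct median first-exit points, and therefore distinct edges crossing $\partial Q$.

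To prove separation, take two consecutive exit points of $A$. Form a Jordan curve $J$ from the two corresponding ray portions (after their last exit from the small box of $A$ containing its starting vertices), a segment joining them inside that box, and the arc of $\partial Q$ between the exit points. Then $B$'s box, together with its ray portions up to their first hit of $\partial Q$, lies on one side of $J$:
\begin{itemize}
\item $B$'s starting vertices lie outside $\backward$ of $A$'s box, so $B$'s rays avoid that box by Lemma~\ref{lemma_backward_tree}. Symmetrically, $A$'s rays avoid $B$'s box.
\item A ray of $B$ that meets a ray of $A$ must coalesce with it, since a one-ended tree has a unique ray from each vertex. So it never crosses $J$.
\end{itemize}
One then identifies which arcs of $\partial Q$ border $\Int J$ and $\Ext J$. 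Your Step~1 records the pendant-tree property but never connects it to this separation, which is exactly where it is needed.

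Finally, in Step~1 the condition $\backward([0,m]^2)\subset[0,r)^2$ is mis-specified. Endpoints of edges meeting $[0,m]^2$ lie in $\backward([0,m]^2)$ and may have negative coordinates, so this event can be null. You need a margin on every side, e.g.\ $(-\ell,m+\ell)^2$, with centers spaced by $m+2\ell$.
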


\begin{figure}[ht]
    \label{fig_planar_one_ended_trifurcations}
    \centering
    \includegraphics[width=0.35\linewidth]{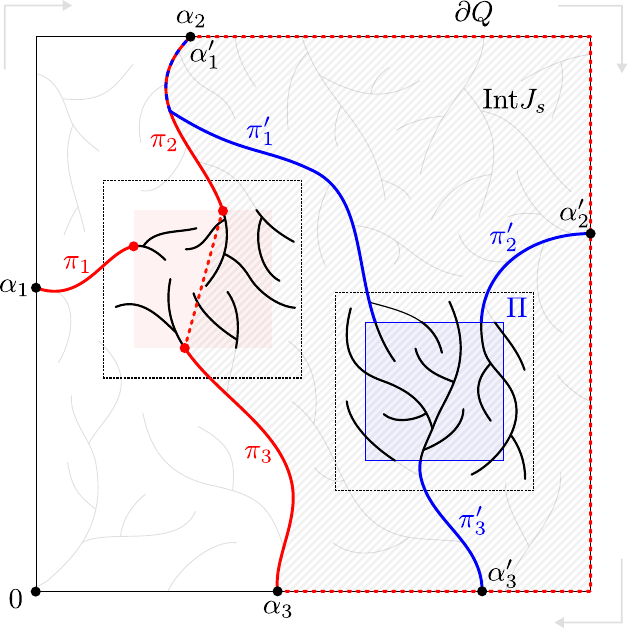}
    \caption{Illustration behind the proof of Proposition~\ref{prop_N1_leq_2}. Dotted squares represent $c_{i, j}+(-k,k+\ell)^2$ and $c_{i', j'}+(-k,k+\ell)^2$ while bold lines represent edges of $\backward(c_{i, j}+[0,k]^2)$ and $\backward(c_{i', j'}+[0,k]^2)$. Here, $s=2$ and $\{\alpha_r'\}_{r\in\{1,2,3\}}\subset [\alpha_2,\alpha_3]$.}
\end{figure}

\begin{proof}
    Let us first reduce the problem to a deterministic question. For all integers $k,\ell\geq 1$, set
        \begin{equation}
            \label{eq_trifurcation_2}
            T_{k,\ell}\coloneqq\{N_1([0,k]^2)\geq3\}\cap\{\backward([0,k]^2)\subset(-\ell, k+\ell)^2\}.
        \end{equation}
    Since $\backward([0,k]^2)$ is almost surely finite for each $k\geq1$, using \emph{stationarity}, one gets
        \[\P(N_1\geq3)=\lim_{k\to\infty}\P[N_1([-\tfrac k2,\tfrac k2]^2)\geq 3]=\lim_{k\to\infty}\P[N_1([0,k]^2)\geq3]=\lim_{k\to\infty}\lim_{\ell\to\infty}\P(T_{k,\ell}).\]
    It is therefore sufficient to check that $\P(T_{k,\ell})=0$ for each $k,\ell\geq 1$. From now on, fix $k,\ell\geq 1$. Consider the random variables $(\eta_n)_{n\geq 1}$ associated via Proposition~\ref{prop_burton_and_keane} to the event $T\coloneq T_{k,\ell}$ and the centers defined for all $i, j\geq1$ by $c_{i, j}\coloneq([k+2\ell]i, [k+2\ell]j)$. Then, Proposition~\ref{prop_burton_and_keane} ensures it is enough to verify that $\chi_{(k+2\ell)n}\geq\eta_n$ a.s.\ for all $n\geq 1$. The problem is now purely geometric. 
    
    Now fix $n\geq 1$ and $\omega\in\Omega_0$. To prove $\chi_{(k+2\ell)n}\geq\eta_n$, let us introduce the following notation. Define $Q\coloneq[0,(k+2\ell)n]^2$ and let $I$ denote the set of index pairs $(i, j)$ between $0$ and $n-1$ such that $\theta_{-c_{i, j}}(\omega)\in T$, so that $\eta_n=\#I$. For each $(i, j)\in I$, one can by definition choose three vertices $[v_s(i, j)]_{s\in\{1,2,3\}}$ in $c_{i, j}+[-\ell, k+\ell]^2$ belonging to different one-ended components of $G$. For each $(i, j)\in I$, set
        \[\pi_{i, j}^{s}:[0,1]\to\pi_{v_s(i, j)}\left(\frac{t-t^0_{i, j}}{t^1_{i, j}-t^0_{i, j}}\right)\quad\text{where}\quad\begin{cases}
         t^0_{i, j}\coloneq t_{v_s(i, j)}^-(\partial Q)\quad\text{and}\\
        t^1_{i, j}\coloneq t_{v_s(i, j)}^+(c_{i, j}+[-\ell,k+\ell]^2).
        \end{cases}\]
    In words, $\pi_{i, j}^s$ denote the re-parametrized portion of the continuous path $\pi_{v_s(i, j)}$ between its last exit of $c_{i, j}+[-\ell,k+\ell]^2$ and first intersection with $\partial Q$. Next, define $f:[0,1)\to\partial Q$ as the continuous bijection that parametrize the boundary of the box $Q$ at constant speed in clockwise order around its center, starting from $0$. For each $(i,j)\in I$ and $s\in\{1,2,3\}$, denote
        \[\alpha_s(i, j)\coloneq f^{-1}\circ\pi_{i, j}^s(1)\in[0,1).\]
    Possibly after permuting the points $[v_s(i, j)]_{s\in\{1,2,3\}}$, let us assume that
        \[\alpha_1(i, j)<\alpha_2(i, j)<\alpha_3(i, j)\quad\text{for all $(i, j)\in I$}.\]
    Now that the relevant objects are defined, observe that since $\{\pi_{i, j}^2(1):(i, j)\in I\}$ correspond to intersection points of paths with $\partial Q$, the number of edges crossing $\partial Q$ must be at least $\{\alpha_2(i, j):(i, j)\in I\}$. Therefore, to show $\chi_{(k+2\ell)n}\geq\eta_n=\#I$, it suffices to establish $\#I=\#\{\alpha_2(i, j):(i, j)\in I\}$, that is, the map
        \[\alpha_2:I\to[0,1),~(i, j)\mapsto\alpha_2(i, j)\]
    is injective. Let us then fix two distinct elements $(i, j)$ and $(i', j')$ in $I$, and denote
        \[\alpha_s\coloneqq \alpha_s(i, j) \quad\text{and}\quad \alpha_s'\coloneqq \alpha_s(i', j') \quad\text{for all $s\in\{1,2,3\}$}\]
    for convenience. It remains to show that $\alpha_2\neq\alpha_2'$. To do so, observe that it is enough to verify that the set $\{\alpha_s'\}_{s\in\{1,2,3\}}$ lies entirely within one of $[\alpha_1,\alpha_2]$, $[\alpha_2,\alpha_3]$ or $[0,\alpha_1]\cup[\alpha_3,1)$. Indeed, containment in $[\alpha_1,\alpha_2]$ or $[\alpha_2,\alpha_3]$ forces either $\alpha_2'<\alpha_3'\leq \alpha_2$ or $\alpha_2'>\alpha_1'\geq \alpha_2$, implying $\alpha_2\neq \alpha_2'$, and if $\{\alpha_s'\}_{s\in\{1,2,3\}}\subset [0,\alpha_1]\cup[\alpha_3,1)$, then either $\alpha_2'\geq \alpha_3>\alpha_2$ or $\alpha_2'\leq \alpha_1<\alpha_2$, yielding the same conclusion. Let us now prove this last property. Set $\pi_s\coloneqq\pi_{i, j}^s$ and $\pi_s'\coloneqq\pi_{i', j'}^s$ for each $s\in\{1,2,3\}$. As illustrated in Figure~\ref{fig_planar_one_ended_trifurcations}, it appears intuitive that $(\pi_s)_{s\in\{1,2,3\}}$ allow to divide the set $Q\setminus(c_{i, j}+[0,k]^2)$ into three regions whose restriction to $\partial Q$ are precisely $[\alpha_1,\alpha_2]$, $[\alpha_2,\alpha_3]$ and $[0,\alpha_1]\cup[\alpha_3,1)$, with one containing all the images of $(\pi_s')_{s\in\{1,2,3\}}$, yielding the desired result.\\
    
    The remaining part of the proof is dedicated to making this heuristic rigorous. Fix $s\in\{1,2\}$ and consider the set
        \[J_s\coloneqq\pi_s([0,1])\cup[\pi_s(0),\pi_{s+1}(0)]\cup\pi_{s+1}([0,1])\cup f([\alpha_s,\alpha_{s+1}]).\]
    By construction, $J_s$ is a Jordan curve. Let $r\in\{1,2,3\}$. Since both $\pi_s$ and $\pi_{s+1}$ corresponds to portions of continuous paths in one-ended components of $G$ that start outside $c_{i', j'}+(-\ell,k+\ell)^2$, then as $\backward(c_{i', j'}+[0,k]^2)\subset c_{i', j'}+(-\ell,k+\ell)^2$ by definition of $(i, j)\in I$, it follows that $\pi_s$, $\pi_{s+1}$, and hence $J_s$, avoid $c_{i, j}+[0,k]^2$ from Lemma~\ref{lemma_backward_tree}. Moreover, by \emph{planarity}, if there exists $t\in[0,1]$ such that $\pi'_r(t)\in\pi_s([0,1])\cup\pi_{s+1}([0,1])$, then $\pi_r'([t,1])\subset \pi_s([0,1])\cup\pi_{s+1}([0,1])$, which implies that $\pi_r'([0,1])\setminus J_s$ is connected. Together, setting
        \[\Pi\coloneqq(c_{i', j'}+[0,k]^2)\cup\bigcup_{r\in\{1,2,3\}}\pi_r'([0,1]),\]
    this proves that $\Pi\setminus J_s$ is connected. Therefore, one has either $\Pi\subset\clInt J_s$ or $\Pi\subset\clExt J_s$ and since $\pi_r'[(0,1)]\subset Q\setminus\partial Q$ by construction for each $r\in\{1,2,3\}$, then setting
        \[S_\Int^s\coloneqq(\partial Q)\cap\partial(\clInt J_s\cap Q\setminus\partial Q)\quad\text{and}\quad S_\Ext^s\coloneqq(\partial Q)\cap\partial(\clExt J_s\cap Q\setminus\partial Q),\]
    it comes that the exit points must satisfy either $[\pi'_r(1)]_{r\in\{1,2,3\}}\subset S_\Int^s$ or $ [\pi'_r(1)]_{r\in\{1,2,3\}}\subset S_\Ext^s$. Applying $f^{-1}$, this implies one has either
        \begin{equation}
            \label{eq_exit_points_int_or_ext}
            [\alpha'_r(1)]_{r\in\{1,2,3\}}\subset f^{-1}(S_\Int^s)\quad\text{or}\quad [\alpha'_r(1)]_{r\in\{1,2,3\}}\subset f^{-1}(S_\Ext^s).
        \end{equation}
    It remains to identify the sets $f^{-1}(S_\Int^s)$ and $f^{-1}(S_\Ext^s)$. More precisely, it suffices to verify that $f^{-1}(S_\Int^s)\subset[\alpha_s,\alpha_{s+1}]$ and $f^{-1}(S_\Ext^s)\subset[0,1)\setminus(\alpha_s,\alpha_{s+1})$. Indeed, if this holds, then together with \eqref{eq_exit_points_int_or_ext} one obtains that either $[\alpha'_r(1)]_{r\in\{1,2,3\}}\subset [\alpha_s,\alpha_{s+1}]$ or $[\alpha'_r(1)]_{r\in\{1,2,3\}}\subset[0,1)\setminus(\alpha_s,\alpha_{s+1})$. Finally, taking this assertion with $s=1$ and $s=2$ yields that the set $\{\alpha_r'\}_{r\in\{1,2,3\}}$ lies entirely within one of $[\alpha_1,\alpha_2]$, $[\alpha_2,\alpha_3]$ or $[0,\alpha_1]\cup[\alpha_3,1)$, as desired.
    
    First, observe that 
        \[[0,1)\setminus[\alpha_s,\alpha_{s+1}]\subset f^{-1}([\partial Q]\cap\Ext J_s).\]
    Indeed, if $\alpha\in[0,1)\setminus[\alpha_s,\alpha_{s+1}]$, then $f(\alpha)\in(\partial Q)\setminus J_s$, and since $\R^2\setminus Q$ is connected, unbounded, and avoids $J_s$, then $\R^2\setminus Q\subset\Ext J_s$ and thus $f(\alpha)\in \Ext J_s$, as desired. Therefore, observing that $S_\Int^s\subset(\partial Q)\cap\clInt J_s=(\partial Q)\setminus\Ext J_s$, it follows
        \begin{equation*}
            \label{eq_S_int}
            f^{-1}(S_\Int^s)\subset f^{-1}([\partial Q]\setminus \Ext J_s)\subset[\alpha_s, \alpha_{s+1}].
        \end{equation*}
    It remains to check that $f^{-1}(S_\Ext^s)\subset[0,1)\setminus(\alpha_s,\alpha_{s+1})$. To that end, let us show that one has $(\alpha_s, \alpha_{s+1})\subset f^{-1}([\partial Q]\setminus S_\Ext^s)$. Fix $\alpha\in (\alpha_s, \alpha_{s+1})$. Then, $z\coloneqq f(\alpha)$ must be at positive distance from the compact set $J_s\setminus f[(\alpha_s, \alpha_{s+1})]$, hence there exists $\varepsilon\in(0,1)$ such that
        \[(z+[-\varepsilon,\varepsilon]^2)\setminus J_s=(z+[-\varepsilon,\varepsilon]^2)\setminus\partial Q.\]
    Next, observe that $(z+[-\varepsilon,\varepsilon]^2)\setminus\partial Q$ consists of exactly two connected component, namely $U_\Int\coloneqq(z+[-\varepsilon,\varepsilon]^2)\cap Q\setminus\partial Q$ and $U_\Ext\coloneqq (z+[-\varepsilon,\varepsilon]^2)\setminus Q$. Therefore, $(z+[-\varepsilon,\varepsilon]^2)\setminus J_s$ must also consist of two connected components, $(z+[-\varepsilon,\varepsilon]^2)\cap\Int J_s$ and $(z+[-\varepsilon,\varepsilon]^2)\cap\Ext J_s$. Since $U_\Ext\subset(z+[-\varepsilon,\varepsilon]^2)\cap\Ext J_s$ from $U_\Ext\subset\R^2\setminus Q\subset\Ext J_s$, it follows that $U_\Ext=(z+[-\varepsilon,\varepsilon]^2)\cap\Ext J_s$ by identifying the connected components of $(z+[-\varepsilon,\varepsilon]^2)\setminus J_s$ and $(z+[-\varepsilon,\varepsilon]^2)\setminus\partial Q$. Thus,
        \begin{equation*}
            \begin{split}
                (z+[-\varepsilon,\varepsilon]^2)\cap\clExt J_s\cap Q\setminus\partial Q&=(z+[-\varepsilon,\varepsilon]^2)\cap\clExt J_s\cap Q\setminus\partial J_s\\
                &=(z+[-\varepsilon,\varepsilon]^2)\cap\Ext J_s\cap Q\\
                &=U_\Ext\cap Q\\
                &=\emptyset,
            \end{split}
        \end{equation*}
    which shows $z\notin\partial (\clExt J_s\cap Q\setminus \partial Q)$, and hence $\alpha\in f^{-1}([\partial Q]\setminus S_\Ext^s)$. This proves the inclusion $(\alpha_s, \alpha_{s+1})\subset f^{-1}([\partial Q]\setminus S_\Ext^s)$. As a consequence,
        \begin{equation*}
            \label{eq_S_ext}
            f^{-1}(S_\Ext^s)\subset[0,1)\setminus (\alpha_s, \alpha_{s+1}).
        \end{equation*}
    This concludes the proof.
\end{proof}

\section{Derivation of the dichotomy result}

\label{section_protected_paths}

In this section, we derive Theorem~\ref{thm_main_directed} from the results already obtained in the previous section. To that end, let $V_2$ denote the set of vertices that belong to a two-ended component of $G$. Also set $V^-\coloneq V\cap(\R\times\R_-)$, $V_1^-\coloneq V_1\cap(\R\times\R_-)$ and $V_2^-\coloneq V_2\cap(\R\times\R_-)$. Under the hypothesis of Theorem~\ref{thm_main_directed}, for all $\omega\in\Omega_0$ and $v\in V^-$, one has $\pi_v(0)\cdot e_2\leq 0$ and $\lim_{t\to\infty}\pi_v(t)\cdot e_2=+\infty$, so that $t^+_v(\R\times\R_-)\in\R_+$ and we can define
    \[\pi_v^+\coloneq\pi_v(\cdot - t^+_v[\R\times\R_-])\quad\text{and}\quad x_v\coloneqq \pi_v^+(0)\cdot e_2.\]
In words, $\pi_v^+$ is the continuous path induced by $\pi_u$ after its last exit of $\R\times\R_-$. We will prove Theorem~\ref{thm_main_directed} from the following key lemma and a subsequent elementary result.

\begin{lemma}
    \label{prop_protected_path}
    Under the hypotheses of Theorem~\ref{thm_main_directed}, almost surely, for all $(u, v)\in V_1^-\times V^-$, $u\leftrightarrow v$.
\end{lemma}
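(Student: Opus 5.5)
The plan is to argue by contradiction on a positive-probability event. Suppose there are $u\in V_1^-$ and $v\in V^-$ with $u\not\leftrightarrow v$, and let $C\ni u$ and $C'\ni v$ be their components. Under the hypotheses of Theorem~\ref{thm_main_directed}, every component is infinite, since out-degree one gives an infinite forward path. By Proposition~\ref{prop_at_most_2_ends}, $C'$ is then one- or two-ended. Since $G$ is oriented with out-degree one, $C$ is one-ended exactly when every vertex has finitely many ancestors, i.e.\ $\backward(w)$ is finite for all $w\in C$ (Lemma~\ref{lemma_backward_tree}).

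\textbf{Step 1: a trapping region.} The two curves $\pi_u^+$ and $\pi_v^+$ are disjoint, since distinct components share no vertex and planarity forbids crossings. Each starts on the axis $\R\times\{0\}$, stays in the open upper half-plane afterwards, and tends to $+\infty$ along $e_2$. Hence they cut the upper half-plane into regions. In particular, they bound a region $R$ whose trace on the axis is the segment between $\pi_u^+(0)$ and $\pi_v^+(0)$; here I read $x_v$ as the first coordinate of $\pi_v^+(0)$. This is proved exactly as in Proposition~\ref{prop_N1_leq_2}: close the curves with a horizontal segment and a far-away arc, then apply the Jordan curve theorem. Planarity then gives the trapping property: any directed path whose last exit from $\R\times\R_-$ lies strictly inside the segment stays in $R$ forever, unless it merges into $C$ or $C'$.

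\textbf{Step 2: a contradiction via stationarity.} By stationarity and ergodicity, the event that such a pair exists for the half-plane $\R\times(-\infty,h]$ has the same probability for every $h$. I would first try to show that it occurs with probability $0$. Shifting downward and using the scale-free Burton--Keane counting of Proposition~\ref{prop_burton_and_keane} with Lemma~\ref{lemma_box_crossings}, I would exhibit, for many translates, disjoint trapped regions. Each one forces a distinct edge crossing through a box boundary, which contradicts $\E[\chi_n]\le4\Lambda n$ along the lines of the injectivity argument for $\alpha_2$. The extra input from one-endedness of $C$ enters here: since $\backward(\pi_u(n))$ is finite and $C$ has no bi-infinite directed path, $C$ occupies only a ``thin'' part of $R$'s boundary. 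Vertices of $C$ far up therefore cannot absorb the infinitely many paths entering $R$ from below. By stationarity such paths must exist at every height.

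\textbf{Main obstacle.} The hardest step is turning the qualitative trapping into a linear-versus-quadratic count. Concretely, one needs an event of positive probability, local in a box, witnessing a trapped region. Then the planar ordering argument must show that distinct translates yield distinct boundary crossings. I would first try to localize using $\backward$ of a large box, as in $T_{k,\ell}$. The event would require a vertex of $V_1$ and a vertex of another component inside the box, with pendant trees contained in a slightly larger box. Because all paths go up, the exit points on the top side of $Q$ should then be ordered monotonically, which I expect gives the injectivity.
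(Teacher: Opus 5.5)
\textbf{Verdict: there is a genuine gap.} Step 2 is where the proof has to happen, and it is a plan rather than an argument. You leave the localized event, the counting and the injectivity unresolved, as you acknowledge. Worse, the heuristic you give for why a contradiction should arise does not work.

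\begin{itemize}
\item The region $R$ between $\pi_u^+$ and $\pi_v^+$ is bounded by two \emph{different} components. It is therefore unbounded, and trapping a path inside it contradicts nothing.
\item Paths entering $R$ from below can be absorbed by $C'$, which may be two-ended with infinite ancestry, or can form further two-ended components. So the ``thin boundary'' remark about $C$ produces no excess of box-boundary crossings.
\item A two-path configuration lacks the three-branch nesting that made $\alpha_2$ injective in Proposition~\ref{prop_N1_leq_2}.
\item Directed paths need not leave a box through its top side, so the hoped-for monotone ordering of exit points has no basis.
\end{itemize}

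\textbf{Missing idea.} Use the bound $N_1\le 2$ already proved in Proposition~\ref{prop_N1_leq_2} instead of redoing a Burton--Keane count. Then place the Jordan-curve trap between two vertices of the \emph{same} component, where it becomes bounded.

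\begin{enumerate}
\item Reductions. Read $x_v$ as the abscissa of $\pi^+_v(0)$. If $x_u=x_v$, planarity forces $u\leftrightarrow v$. The case $x_u>x_v$ follows by reflection.
\item Event. Let $A_k$ be the event that some $u\in V_1^-$ and $v\in V^-$ with $-k<x_u<x_v<k$ are not connected. Set $X=\sum_{i\in\Z}\1_{\theta_{-2kie_1}(\omega)\in A_k}$.
\item Trap. Take $i<i'$ both counted by $X$, with witnesses $(u_i,v_i)$ and $(u_{i'},v_{i'})$. The windows are disjoint, so $x_{u_i}<x_{v_i}<x_{u_{i'}}$. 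Suppose $u_i\leftrightarrow u_{i'}$. Their upward paths then coalesce, and $[\pi^+_{u_i}(0),\pi^+_{u_{i'}}(0)]\cup\overline{\pi^+_{u_i}(\R_+)\triangle\pi^+_{u_{i'}}(\R_+)}$ is a Jordan curve. Near $\pi^+_{v_i}(0)$ the upper half-plane coincides with its interior. The path $\pi^+_{v_i}(\R_+^*)$ stays in the open upper half-plane and avoids that component, so it stays in the interior. This contradicts its unboundedness.
\item Count. The $u_i$ therefore lie in pairwise distinct one-ended components, so $X\le N_1\le 2$. Stationarity gives $\E[X]=\sum_{i\in\Z}\P(A_k)$, which forces $\P(A_k)=0$.
\end{enumerate}

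The contradiction thus comes from a one-dimensional horizontal translate count bounded by a constant, not from a linear-versus-quadratic box count.
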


\begin{proof}
    First, observe that up to also considering the graph reflected with respect to the first coordinate, defining for each integer $k\geq 1$ the event
        \[A_k\coloneq\{\exists (u,v)\in V_1^-\times V^-,~-k<x_u<x_v<k\quad\text{and}\quad v\nleftrightarrow u\},\]
    it suffices to show $\P(A_k)=0$ for each $k\geq1$. From now on, fix $k\geq1$ and define the random variable
        \[X:\omega\mapsto\sum_{i\in\Z}\1_{\theta_{-2kie_1}(\omega)\in A_k}.\]
    Then, showing $N_1\geq X$ almost surely is enough to conclude. Indeed, if this hold, then by Proposition~\ref{prop_at_most_2_ends} and \emph{stationarity}, $2\geq\E[N_1]\geq\E[X]=\sum_{i\in\Z}\P(A_k)$, which implies $\P(A_k)=0$. From now on, let us fix a realization $\omega\in\Omega_0$. Set $I\coloneqq\{i\in\Z:\theta_{-2kie_1}(\omega)\in A_k\}$. By definition, for each $i\in\Z$, there exists $(u_i,v_i)\in V_0^-\times V^-$ with $k(2i-1)<x_{u_i}<x_{v_i}<k(2i+1)$ such that $v_i\nleftrightarrow u_i$. Let us conclude by showing that for all $i,i'\in I$ with $i<i'$, $u_i$ and $u_{i'}$ belongs to different one-ended component of $G$, so that $N_1\geq\#I=X$. Fix $i,i'\in I$ such that $i<i'$ and assume by contradiction that $u_i\leftrightarrow u_{i'}$. Then, the directed infinite paths started at $u_i$ and $u_{i'}$ coalesce, so that
        \[J\coloneqq[\pi^+_{u_i}(0), \pi^+_{u_{i'}}(0)]\cup\overline{\pi^+_{u_i}(\R_+)\triangle \pi^+_{u_{i'}}(\R_+)}\]
    forms a Jordan curve. Now observe that by construction, since $i'\geq i+1$,
        \[x_{u_i}<x_{v_i}<x_{u_{i'}},\]
    so that $\pi^+_{v_i}(0)\notin\overline{\pi^+_{u_i}(\R_+)\triangle \pi^+_{u_{i'}}(\R_+)}$ and there exists $\varepsilon>0$ such that the ball 
        \[B\coloneqq\{z\in\R^2:\|z-\pi^+_{v_i}(0)\|<\varepsilon\}\]
    satisfies $B\setminus J=B\setminus(\R\times\{0\})$. Additionally, since $\R\times\R_-^*$ is connected, unbounded, and avoids $J$, then $\R\times\R_-^*\subset\Ext J$. Together, identifying the two connected components of $B\setminus(\R\times\{0\})$ to those of $B\setminus(\R\times\{0\})$, this yields
        \[B\cap (\R\times\R_+^*)=B\cap\Int J.\]
    Therefore, since $\pi^+_{v_i}(\R_+^*)\subset \R\times\R_+^*$ and $\pi^+_{v_i}(\R_+^*)$ avoids both $\pi^+_{u_i}(\R_+)$ and $\pi^+_{u_i}(\R_+)$ as $v_i\nleftrightarrow u_i\leftrightarrow u_{i'}$, this yields $\pi^+_{v_i}(\R_+^*)\subset\Int J$, which is absurd since $\pi^+_{v_i}(\R_+^*)$ is unbounded. This completes the proof.
\end{proof}

\begin{lemma}
    \label{lemma_N2_directed_dichotomy}
    Under the hypotheses of Theorem~\ref{thm_main_directed}, $N_2\geq1\implies N_2=\infty$ almost surely.
\end{lemma}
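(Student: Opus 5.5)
We argue by contradiction: assume $N_2=m$ with $1\leq m<\infty$ on a full-probability event (recall that $(N_0,N_1,N_2)$ is a.s.\ constant by ergodicity). The goal is to build a finite, translation-covariant selection from the two-ended components and obtain a contradiction with stationarity.

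The structure of a two-ended component is the first ingredient. Under the hypotheses of Theorem~\ref{thm_main_directed}, each vertex has out-degree $1$, so each component contains at most one bi-infinite undirected path, its \emph{spine}. This spine must be a directed bi-infinite path $(\gamma_n)_{n\in\Z}$: a simple undirected path in an out-degree-one forest can change direction at most once, at a vertex with two incoming edges. A bi-infinite simple path therefore consists either of a backward ray and a forward ray glued at a common vertex, or of a single directed path. In the first case, following out-edges from a vertex in either half eventually leads forward, but the two halves point ``away'' from each other. To rule this out, I would check that following the unique out-edge from each end of the path contradicts acyclicity or out-degree one. Hence the spine is directed, and along it $\gamma_n\cdot e_2\to+\infty$ as $n\to+\infty$.

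I then need the backward end. Since $N_2<\infty$ and the intensity is finite, I expect $\gamma_n\cdot e_2\to-\infty$ as $n\to-\infty$. To see this, note that if the backward ray stayed in a half-plane $\{y\geq c\}$, then the set of such backward rays would be a finite, translation-covariant family, and vertical translation invariance would contradict this. Concretely, I would define the random variable
\[Y\coloneqq\inf\{\gamma_n\cdot e_2:\ n\in\Z,\ \gamma \text{ a spine}\}\in[-\infty,\infty).\]
If $Y>-\infty$ with positive probability, then $Y\circ\theta_{s}=Y+s\cdot e_2$, which is incompatible with stationarity, since the law of $Y$ would have to be invariant under all shifts.

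Once every spine crosses every horizontal line, each spine meets $\R\times\{0\}$ at a last-exit point $x_v$, taking $v$ on the spine in $V^-$ as in Section~\ref{section_protected_paths}. Let $X$ be the leftmost such crossing abscissa among the finitely many spines; it is finite and well-defined. Horizontal translation by $t e_1$ gives $X\circ\theta_{te_1}=X+t$. This contradicts stationarity in the same way: the law of a real random variable cannot be invariant under every shift. Hence $m\in\{0,\infty\}$.

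The main obstacle I expect is making the measurability and covariance of these extremal functionals precise, and rigorously showing that spines are directed and unbounded below. For the latter, I will also handle the possibility that a spine's infimum height is finite but not attained. The shift argument covers this case as well, since it only uses the identity $Y\circ\theta_s=Y+s\cdot e_2$.
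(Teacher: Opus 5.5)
Your argument is correct in substance, but it is organized differently from the paper's. The paper does not argue by contradiction. It uses vertical stationarity, through $V_2=\bigcup_{n\geq0}(ne_2+V_2^-\circ\theta_{-ne_2})$, to get $V_2^-\neq\emptyset$ a.s. It then takes the set $X$ of last-exit abscissae $x_v$ and bounds $N_2\geq\#X\geq1$. Horizontal stationarity gives $\E[\#X]=\sum_{i\in\Z}\E[\#(X\cap[i,i+1))]\in\{0,\infty\}$, hence $\E[N_2]=\infty$ and $N_2=\infty$.

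You instead assume $N_2=m<\infty$ and build covariant extremal functionals: the infimal height $Y$ for the vertical step and the leftmost crossing for the horizontal step. Their laws would then have to be shift-invariant, which is impossible. Both routes rest on the same fact: a nonempty, translation-covariant point set on a line cannot be finite. The counting version needs neither the finiteness hypothesis nor an argmin. Yours makes the one-point-per-component selection explicit.

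That explicitness is useful. The paper ranges over all $v\in V_2^-$ and asserts $v\leftrightarrow v'\iff x_v=x_{v'}$. This holds for vertices on the bi-infinite path, but can fail for vertices of finite pendant trees, whose forward path may last leave $\R\times\R_-$ elsewhere before joining the spine. So the bound $N_2\geq\#X$ really requires restricting to spine vertices, which is what your Step~1 provides.

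Two slips should be repaired.

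\textbf{Step~1.} The case to exclude is two \emph{incoming} rays meeting at a vertex $w$. What you describe, ``a backward ray and a forward ray glued at a vertex'', is just a directed path. Acyclicity and out-degree one do not exclude the incoming-rays case by themselves. The point is that the forward directed path from $w$ is infinite and, by acyclicity, meets neither ray. This produces a third end, contradicting two-endedness (or Proposition~\ref{prop_at_most_2_ends}). Similarly, uniqueness of the bi-infinite path in a component comes from two-endedness, not from out-degree one.

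\textbf{Step~2.} $Y=-\infty$ only shows that \emph{some} spine has vertices arbitrarily far down. It does not show that every spine does, and certainly not that $\gamma_n\cdot e_2\to-\infty$. This is harmless: Step~3 only needs the a.s.\ nonempty finite family of spines that have a vertex in $V^-$. The leftmost last-exit abscissa over that family still satisfies $X\circ\theta_{te_1}=X+t$, so the contradiction goes through unchanged.
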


\begin{proof}
    Since $N_2$ is constant on an event of full-probability, let us assume that $N_2\geq 1$ with probability one. Since $N_2\geq 1$ implies $V_2\neq\emptyset$, and $V_2=\bigcup_{n\geq 0}(ne_2+V_2^-\circ\theta_{-ne_2})$, \emph{stationarity} ensures that $\P(V_2^-\neq\emptyset)=1$. Now, define
        \[X\coloneqq\{x_v:v\in V_2^-\}.\]
    By construction, for each $v,v'\in V_2^-$, $v\leftrightarrow v'$ if and only if $v'\in\pi_v(\R_+)$ or $v\in\pi_{v'}(\R_+)$, i.e., $x_v=x_{v'}$, so that
        \[N_2\geq\#X\geq\1_{V_2^-\neq\,\emptyset}\]
    almost surely. Then, denoting $X_i\coloneq X\cap[i,i+1)$ for all $i\in\Z$, it comes
        \[\E[N_2]\geq\E[\#X]=\sum_{i\in\Z}\E[X_i]=\sum_{i\in\Z}\E[X_0]\in\{0,\infty\},\]
    by \emph{stationarity}. Together with $\E[\#X]\geq\P(V^-_2\neq\emptyset)=1$, this implies that $\E[N_2]=\infty$ and thus $N_2=\infty$ since $N_2$ is constant on an event of full-probability.
\end{proof}

\begin{remark}
    The proof of Lemma~\ref{lemma_N2_directed_dichotomy} does not rely on \emph{planarity} and generalizes directly to higher dimensions.
\end{remark}

\begin{proof}[Proof of Theorem~\ref{thm_main_directed}]
    First, observe that with probability one, $G$ contains no finite components since it is an out-degree one forest. If $N_1\geq 1$ almost surely, then, since $V_1=\bigcup_{n\geq 0}(ne_2+V_1^-\circ\theta_{-ne_2})$, \emph{stationarity} gives that with probability one, $V_1^-\neq\emptyset$. Together with Lemma~\ref{prop_protected_path}, one gets that all vertices in $V^-$ belong to the same one-ended of $G$. Using \emph{stationarity} again, since $V=\bigcup_{n\geq 0} (ne_2+V^-\circ\theta_{-ne_2})$, this shows that $N_1\geq 1$ implies $N_1=1$ and $N_2=0$, almost surely. Since, with probability one, all components of $G$ have at most two ends from Proposition~\ref{prop_at_most_2_ends} and $N_2\geq 1\implies N_2=\infty$ almost surely from Lemma~\ref{lemma_N2_directed_dichotomy}, this establishes the result.
\end{proof}

\section{Examples}

\label{section_examples}


In this section, we construct examples showing the optimality of Theorem~\ref{thm_main_undirected}. Our construction is based on the \emph{uniform spanning tree} of the lattice $\Z^2$, denoted $G_\mathrm{UST}=(\Z^2, E_\mathrm{UST})$ on a probability space $(\Omega,\mathcal F,\P)$. We refer to \cite{uniform_spanning_forest} for further details. It is well known that $G_\mathrm{UST}$ is almost surely a one-ended tree and that its law is stationary and ergodic with respect to translations of $\Z^2$. Using Remark~\ref{remark_ergodicity_stationarity}, this already provides an example realizing $(N_0, N_1, N_2)=(0,1,0)$. We will now deterministically construct other examples from this base case to realize the remaining possibilities.

We begin by introducing the \emph{dual graph} of $G_\mathrm{UST}$, denoted $G_\mathrm{UST}$. Its vertices are the points $(\frac12,\frac12)+\Z^2$, and its edges consist of pairs $(u, v)$ with $\|u-v\|=1$ such that $[u, v]$ does not intersect any line segment corresponding to an edge in $G_\mathrm{UST}$. Next, we define \emph{contour} of a sub-graph of the $\Z^2$ lattice.

\begin{definition}[Contour]
    If $\mathcal Z=(V_\mathcal Z,E_\mathcal Z)$ is a sub-graph of the $\Z^2$ lattice and
    $\varepsilon\in(0,\tfrac12)$, its contour at distance $\varepsilon$ is defined as the graph
    $\mathrm{Cont}[\mathcal Z,\varepsilon]$ with vertex set
    $V_\mathcal Z+\varepsilon\Delta$, where $\Delta\coloneqq\{-1,1\}^2$, and whose edges consist of 
        \begin{itemize}
            \item pairs $(v+\varepsilon\delta,v+\varepsilon\delta')$ with $v\in V_\mathcal Z$
            and distinct $\delta,\delta'\in\Delta$ such that
            $\bigl(v,\,v+\tfrac{\delta+\delta'}2\bigr)\notin E_\mathcal Z$,
            and
            \item pairs $(v+\varepsilon\delta,v'+\varepsilon\delta')$ with $(v,v')\in E_\mathcal Z$
            and $\delta,\delta'\in\Delta$ satisfying
            $\tfrac{\delta-\delta'}2=v'-v$.
        \end{itemize}
\end{definition}

We now establish the key property that the contours of $G_\mathrm{UST}$ are almost surely bi-infinite paths.

\begin{lemma}
    \label{lemma_bi_infinite_ray}
    For all $\varepsilon\in(0,\tfrac{1}{2})$, almost surely, the graph 
    $\mathrm{Cont}[G_\mathrm{UST}, \varepsilon]$ is isomorphic to the $\Z$ lattice, hence in particular a two-ended tree.
\end{lemma}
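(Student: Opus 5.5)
The plan is to show three things in turn: every vertex of $\mathrm{Cont}[G_\mathrm{UST},\varepsilon]$ has degree exactly $2$, the contour graph is connected, and it is acyclic. A connected $2$-regular acyclic graph is a bi-infinite path, i.e.\ isomorphic to the $\Z$ lattice.

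\textbf{Degree two.} Fix $v\in\Z^2$ and a corner $v+\varepsilon\delta$. The two lattice directions adjacent to this corner are $\delta_1 e_1$ and $\delta_2 e_2$, where $\delta=(\delta_1,\delta_2)$. For each of these directions exactly one contour edge leaves the corner:
\begin{itemize}
\item if the lattice edge from $v$ in that direction is absent, there is a first-type edge to the neighbouring corner $v+\varepsilon\delta'$, where $\delta'$ differs from $\delta$ in the other coordinate (so that $\tfrac{\delta+\delta'}2$ is that direction);
\item if the lattice edge is present, there is a second-type edge running parallel to it, to the corresponding corner of $v'$.
\end{itemize}
Checking the sign condition $\tfrac{\delta-\delta'}2=v'-v$ against the definition gives exactly these edges and no others. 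Hence every corner has degree $2$.

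\textbf{Geometric picture.} The contour is the boundary of the $\varepsilon$-thickening of the planar set $\bigcup_{e\in E_\mathrm{UST}} e$. Each contour edge lies in a single face of the tree, i.e.\ in a unit square $(\tfrac12,\tfrac12)+\ldots$ of the dual picture, and it follows the tree at distance $\varepsilon$. So the contour consists of disjoint simple curves, and its edges do not cross.

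\textbf{Acyclicity.} Suppose a finite cycle exists. It would be a Jordan curve $J$ at distance $\varepsilon$ from the tree, avoiding the tree. The tree is connected, so it lies entirely on one side of $J$. It cannot lie in $\Int J$, since it is infinite and spans $\Z^2$. So it lies in $\Ext J$, which means $\Int J$ contains no lattice points. But every contour corner is within $\varepsilon$ of a lattice point $v$ whose edges bound the region locally on the other side, and the square around $v$ meets $\Int J$. This is a contradiction. A cleaner way to phrase it: the region adjacent to the cycle on the side away from $v$ contains $v$. I would make this rigorous by showing that $v$ is enclosed, via local winding.

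\textbf{Connectedness.} This is the main obstacle. I would use the classical duality: $G_\mathrm{UST}$ is a.s.\ a one-ended spanning tree, so its dual is also a.s.\ a one-ended spanning tree of $(\tfrac12,\tfrac12)+\Z^2$. The contour then coincides with the boundary between the primal and dual trees. Following it, one traverses an Euler tour around the primal tree, which is equivalently the ``Peano curve'' separating the two trees.

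For any two corners, take a large box $B$. Both trees are one-ended, so finite pendant trees are finite (Lemma~\ref{lemma_backward_tree}). I would take a finite subtree $S$ of the primal tree containing the relevant vertices, together with its unique ray to infinity. The contour around $S$ then consists of the tour around $S$, joined along the two sides of the ray. Concretely, walking along the contour from a corner, one goes around finite pendant subtrees, which are finite, and progresses along the spine.

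The key fact needed is this: the contour path starting at any corner, in both directions, visits every corner adjacent to any finite pendant tree it meets, and every vertex lies in a finite pendant tree of some spine vertex. The only way for two contour components to exist would be two disjoint primal-free infinite corridors, i.e.\ a dual tree with two ends or a primal tree with two ends. That is excluded by the one-endedness of both. Making this last step precise, by identifying components of the contour with pairs of adjacent ends of primal and dual trees, is where I expect most care to be needed.
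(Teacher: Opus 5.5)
\textbf{The gap is connectedness.} Your degree-two check is correct, and you rightly name connectedness as the crux, but that is exactly the step you do not prove. Once $2$-regularity is known, connectedness is the whole content of the lemma: a connected $2$-regular graph on the infinite vertex set $\Z^2+\varepsilon\Delta$ is automatically a bi-infinite path. So your acyclicity step is superfluous.

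As written, that acyclicity step is also not valid. ``$\Int J$ contains no lattice point'' and ``the square around $v$ meets $\Int J$'' do not contradict each other. No local winding argument can show that $v$ is enclosed: for a subgraph of $\Z^2$ containing a $4$-cycle, the inner contour of that $4$-cycle is a contour cycle with no lattice point inside. Excluding such cycles needs the global fact that $G_\mathrm{UST}$ has no cycles.

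For connectedness itself, you only assert the two claims that would carry the argument. The first is that the contour walk visits the corners of each finite pendant tree in one contiguous stretch. The second is that distinct contour components would force two ends in the primal or dual tree. You say yourself that making this precise is where the care lies. As it stands, the proposal has a gap exactly at the only nontrivial step.

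\textbf{The missing idea is the paper's short Jordan-curve argument.} Take two contour edges $[w_i,w_i']$, $i\in\{1,2\}$. Each is the unique contour edge crossing a short segment $[z_i,z_i^*]$, where $z_i$ is a point of the primal tree $C$ (the vertex $v_i$, or the midpoint of the tree edge $(v_i,v_i')$) and $z_i^*$ is a point of the dual tree $C^*$. Join $z_1$ to $z_2$ inside $C$ and $z_1^*$ to $z_2^*$ inside $C^*$; both are connected. Together with the two short segments this gives a Jordan curve $J$. Contour edges avoid $C\cup C^*$, so they cross $J$ only at $[w_1,w_1']$ and $[w_2,w_2']$, and $w_1,w_1'$ lie on opposite sides of $J$. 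The non-backtracking contour walk entering the bounded region $\Int J$ through $(w_1,w_1')$ meets only finitely many vertices there, so it must leave. Its only exit is through $[w_2,w_2']$.

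\textbf{Alternatively, your Euler-tour idea can be made rigorous without the dual.} Show by induction on $\#S$ that if a finite subtree $S$ is attached to the rest of the tree by a single edge, then the corners $S+\varepsilon\Delta$ form one finite path in the contour graph. Then apply this to the increasing trees $\backward(s_j)$ along a ray $(s_j)_{j\geq0}$; their union is $\Z^2$ by one-endedness. Either argument has to be carried out for the proof to be complete.
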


\begin{figure}[ht]
    \label{fig_contour}
    \centering
    \includegraphics[width=0.4\linewidth]{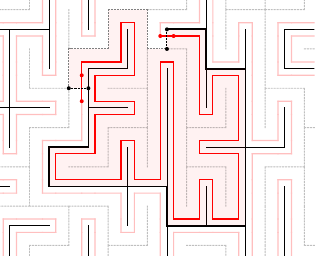}
    \caption{Simulation illustrating the proof of Lemma~\ref{lemma_bi_infinite_ray} realized using the Broder-Aldous algorithm (see \cite{broder_aldous}). The edges of $G_\mathrm{UST}$ and $G_\mathrm{UST}^*$ are represented by solid black and dotted gray lines, respectively, while those of $\mathrm{Cont}[G_\mathrm{UST}, \varepsilon]$ appear in red. Red and black dots indicate the vertices $\{w_1,w_1',w_2,w_2'\}$ and the points $\{z_1,z_1^*,z_2,z_2^*\}$, respectively. The translucent red region represents $\Int J$.}
\end{figure}

\begin{proof}
    An illustration to follow the proof is given in Figure~\ref{fig_contour}. Fix $\varepsilon\in(0,\tfrac12)$ and $\omega\in\Omega_0$. By construction,
    $\mathrm{Cont}[G_\mathrm{UST}, \varepsilon]$ is a $2$-regular infinite graph.
    Thus, it suffices to prove that it is connected. Let $C$ and $C^*$ denote the connected subsets of $\R^2$ obtained by taking the unions of the line segments corresponding to the edges of $G_\mathrm{UST}$ and $G_\mathrm{UST}^*$, respectively. Fix two distinct edges
        \[(w_1,w_1')=(v_1+\varepsilon\delta_1,\,v_1'+\varepsilon\delta_1')\quad\text{and}\quad(w_2,w_2')=(v_2+\varepsilon\delta_2,\,v_2'+\varepsilon\delta_2')\]
    in $\mathrm{Cont}[G_\mathrm{UST}, \varepsilon]$, where
    $v_1,v_1',v_2,v_2'\in\Z^2$ and $\delta_1,\delta_1',\delta_2,\delta_2'\in\Delta$. Let us show that $w_1$ and $w_2$ are connected in $\mathrm{Cont}[G_\mathrm{UST},\varepsilon]$ to conclude the proof. For each $i\in\{1,2\}$, set
        \[z_i\coloneqq \frac{v_i+v_i'}{2}\qquad\text{and}\qquad z_i^*\coloneqq z_i+\frac{\delta_i+\delta_i'}{4}.\]
    Then one checks that $z_i\in C$, $z_i^*\in C^*$, $[z_i,z_i^*]\setminus\{z_i,z_i^*\}$ avoids $C\cup C^*$, and $[w_i,w_i']$ is the only line segment of an edge in $\mathrm{Cont}[G_\mathrm{UST}, \varepsilon]$ intersecting $[z_i,z_i^*]$. Since $C$ and $C^*$ are connected sets corresponding to trees, there exist continuous simple paths $f:[0,1]\to C$ from $z_1$ to $z_2$ and $f^*:[0,1]\to C^*$ from $z_1^*$ to $z_2^*$. The set
        \[J\coloneqq [z_1,z_1^*]\cup f^*([0,1])\cup[z_2^*,z_2]\cup f([0,1])\]
    then forms a Jordan curve. One can check that by construction, the points $w_1$ and $w_1'$ lie in different connected components of $\R^2\setminus J$. Assume without loss of generality that $w_1\in\Ext J$ and $w_1'\in\Int J$. Since $\Int J$ contains only finitely many vertices of the $2$-regular graph $\mathrm{Cont}[G_\mathrm{UST}, \varepsilon]$, the unique path in $\mathrm{Cont}[G_\mathrm{UST}, \varepsilon]$ that starts by entering $\Int J$ along $(w_1,w_1')$ and never immediately retraces an edge must eventually exit $\Int J$ through $(w_2,w_2')$ or $(w_2',w_2)$. This concludes the proof.
\end{proof}

\subsection*{Cases with $N_2<\infty$}

From this point, many examples can be obtained. Indeed, for all distinct $\varepsilon,\varepsilon'\in(0,\frac{1}{2})$, the line segments corresponding to edges of the a.s.\ two-ended tree $\mathrm{Cont}[G_\mathrm{UST}, \varepsilon]$ intersect none of those in $G_\mathrm{UST}$, $G_\mathrm{UST}^*$, or $\mathrm{Cont}[G_\mathrm{UST}, \varepsilon']$. Consequently, if we denote by  $G_\mathrm{iso}\coloneqq[ (\tfrac13,\tfrac13)+\Z^2,\emptyset]$ the graph consisting of isolated vertices, then any $G$ obtained as a finite union of elements chosen among $G_\mathrm{UST}$, $G_\mathrm{UST}^*$, $(\mathrm{Cont}[G_\mathrm{UST}, \tfrac{1}{n}])_{n\geq4}$, and $G_\mathrm{iso}$ satisfies Assumption~\ref{assumptions} up to Remark~\ref{remark_ergodicity_stationarity}. Then, all cases of Theorem~\ref{thm_main_undirected} with $N_2<\infty$ are realized.

\subsection*{Cases with $N_2=\infty$}

It remains to construct examples with $N_2=\infty$. Note that while the preceding construction relied on taking finite unions, which automatically preserve the \emph{finite edge intensity} condition in Assumption~\ref{assumptions}, allowing infinitely many two-ended components when $N_1=1$ is more delicate. To address this difficulty, we introduce the following notion.

\begin{definition}[Peeling]
    For any graph $G_0$, let $\mathrm{Peel}(G_0)$ denote the graph obtained by removing all leaves of $G_0$. For all $n\geq 1$, let $\mathrm{Peel}^n$ denote the $n$-fold composition of this operation.
\end{definition}

Next, observe that since $G_\mathrm{UST}$ is a.s.\ one-ended, then for each integer $n\geq 0$, the graph $\mathrm{Peel}^n(G_\mathrm{UST})$ is a.s.\ infinite, so that for each $\varepsilon\in(0,\frac12)$, its contour $\mathrm{Cont}[\mathrm{Peel}^n(G_\mathrm{UST}), \varepsilon]$ is a.s.\ isomorphic to the $\Z$ lattice, and hence two-ended. To see this last point, one can proceed by induction, with the base case given by Lemma~\ref{lemma_bi_infinite_ray}. To get the induction step at $n\geq0$, one can check that if $(v_i)_{i\in\Z}$ is an isomorphism from the $\Z$ lattice to $\mathrm{Cont}[\mathrm{Peel}^n(G_\mathrm{UST}), \varepsilon]$, then any sequence $(v_i')_{i\in\Z}$ obtained from $(v_i)_{i\in\Z}$ by discarding the points that are vertices of $\mathrm{Cont}[\mathrm{Peel}^n(G_\mathrm{UST}), \varepsilon]$ while preserving the order of the others forms an isomorphism from the $\Z$ lattice to $\mathrm{Cont}[\mathrm{Peel}^{n+1}(G_\mathrm{UST}), \varepsilon]$. Now, for any increasing application $\phi:\Z_+\to\Z_+$, let us define $G_\phi$ as the graph obtained by union of the
    \[(\mathrm{Cont}[\mathrm{Peel}^{\phi(n)}(G_\mathrm{UST}), \tfrac1n])_{n\geq4}.\]
Then, $G_\phi$ consists a.s.\ of infinitely many two-ended components. Moreover, if $\phi$ can be chosen so that $G_\phi$ satisfies a \emph{finite edge intensity} condition, that is the average number $\Lambda_\phi$ of edges in $G_\phi$ crossing $[-\frac12,\frac12]^2$ is finite, then, up to using Remark~\ref{remark_ergodicity_stationarity}, the graphs $G$ obtained by union of elements among $G_\mathrm{UST}^*$, $G_\phi$ and $G_\mathrm{iso}$ realizes all the remaining cases in Theorem~\ref{thm_main_undirected}.

\begin{figure}[ht]
    \label{fig_peeling_example}
    \centering
    \includegraphics[width=0.33\linewidth]{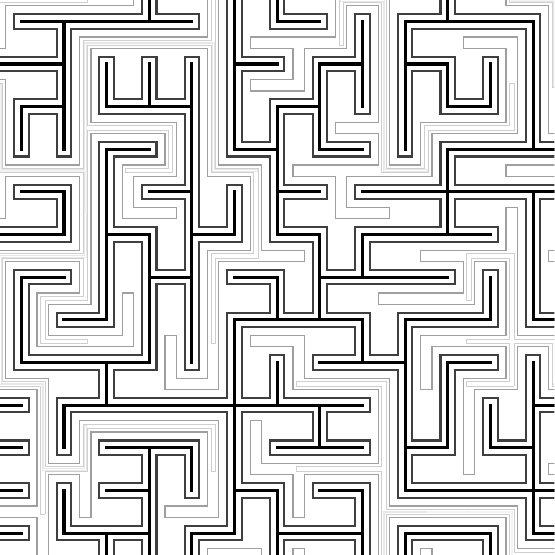}\hspace{.16\linewidth}\includegraphics[width=0.33\linewidth]{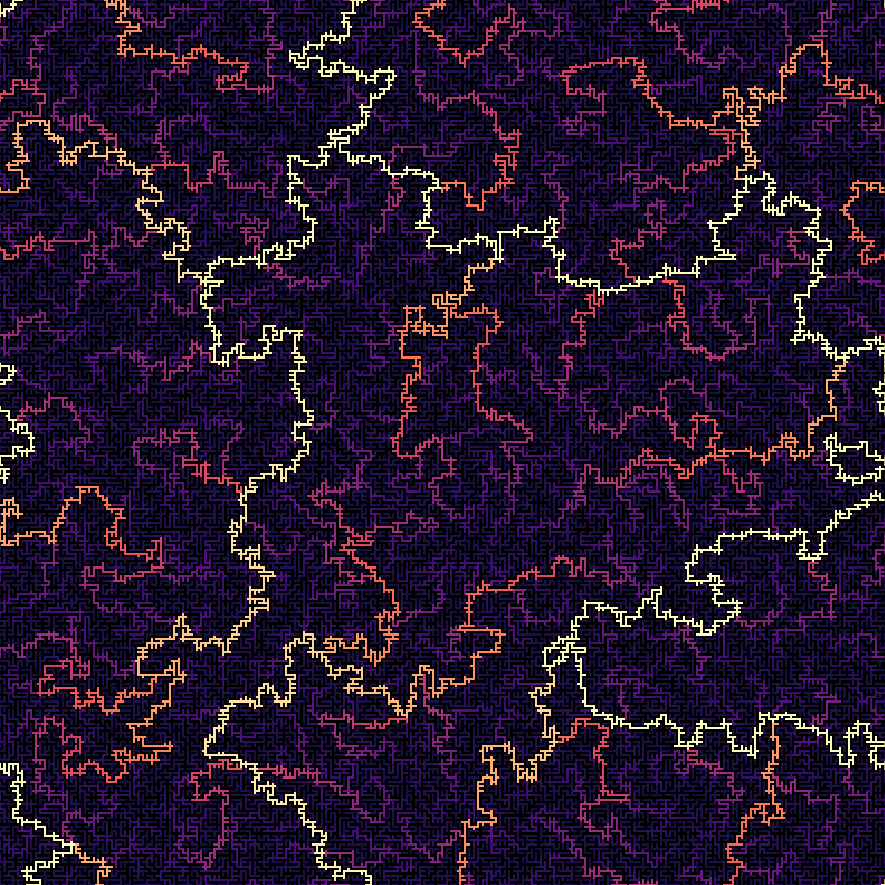}
    \caption{On the left, a simulation of $G_\mathrm{UST}^*$ and $G_\phi$ for some arbitrarily chosen increasing function $\phi:\Z_+\to\Z_+$. On the right, a simulation of $G_\mathrm{UST}$ with each edge $(v,v')$ colored according to $\max\{d_\mathrm{Peel}(v),d_\mathrm{Peel}(v')\}$, where brightness increases with higher values. Qualitatively, this picture corresponds to the edge count per unit square of the graph on the left, viewed at a larger scale.}
\end{figure}

We conclude the section by showing that $\phi$ can indeed be chosen such that $\Lambda_\phi<\infty$. An illustration to follow the argument is provided in Figure~\ref{fig_peeling_example}. For each $n\geq0$, let $V_n$ denote the vertex set of $\mathrm{Peel}^n(G_\mathrm{UST})$, and for all vertex $v\in\Z^2$, define its \emph{peeling depth} in $G_\mathrm{UST}$ by
    \[d_\mathrm{Peel}(v)\coloneqq\inf\{n\geq0:v\notin V_n\}\]
with the convention $\inf\emptyset=\infty$. Observe that for all $v\in\Z^2$, $d_\mathrm{Peel}(v)$ is almost surely finite. To see that, fix $\omega\in\Omega_0$ and $v\in\Z^2$ and consider the depth $n\in\Z_+$ of the finite sub-tree rooted at $v$ induced by $\backward(v)$. Then, one can check that in $\mathrm{Peel}^n(G_\mathrm{UST})$, all points of $\backward(v)$ have been removed except for $v$. Therefore, since $G_\mathrm{UST}$ is one-ended, $d_\infty(v)=1$ so that $v$ is a leaf of $\mathrm{Peel}^n(G_\mathrm{UST})$, and thus $d_\mathrm{Peel}(v)=n+1<\infty$. Now observe that for any $n\geq0$, the number of line segment corresponding to edges of $\mathrm{Cont}[\mathrm{Peel}^n(G_\mathrm{UST}), \varepsilon]$ intersecting $[-\frac12,\frac12]^2$ is null if $0\notin V_n$ and at most $8$ otherwise, so that for any increasing $\phi:\Z_+\to\Z_+$,
    \[\Lambda_\phi\leq8\E\left[\sum_{n\geq0}\1_{0\in V_{\phi(n)}}\right]=8\E\left[\sum_{n\geq0}\1_{d_\mathrm{Peel}(0)>\phi(n)}\right]=8\sum_{n\geq0}\P[d_\mathrm{Peel}(0)>\phi(n)].\]
Finally, since $d_\mathrm{Peel}(0)$ is almost surely finite, then $\lim_{t\to\infty}\P[d_\mathrm{Peel}(0)>t]=0$ and one can choose $\phi$ so that the series in the right hand side of the display above converges, yielding $\Lambda_\phi<\infty$ as desired.

\section{Corridor structure}

\label{section_doors}

The goal of this section is to prove the following proposition, which is the last point of Theorem~\ref{thm_main_undirected}.

\begin{proposition}
    \label{prop_N1_is_2_implies_N2_finite}
    Almost surely, $N_1=2\implies N_2<\infty$.
\end{proposition}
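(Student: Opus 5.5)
We argue by contradiction: assume that with positive probability, hence by ergodicity almost surely, $N_1=2$ and $N_2=\infty$. We then build a stationary family of ``doors'' crossing the corridor between the two one-ended trees $T_1$ and $T_2$, and run a mass-transport/Burton--Keane type counting to contradict finite edge intensity.

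\emph{Step 1: the corridor.} Fix $\omega\in\Omega_0$. Let $\gamma_1,\gamma_2$ be the unique rays of $T_1,T_2$ from given starting vertices. By planarity, the union of the continuous paths of $T_1$ and $T_2$ splits $\R^2\setminus(T_1\cup T_2)$ into regions. The relevant one is the region $R$ lying ``between'' the trees, bounded on one side by $T_1$ and on the other by $T_2$. Every two-ended component must lie in this corridor, since both its ends escape to infinity without meeting $T_1\cup T_2$.

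We define a \emph{door} as a segment $[a,b]$ with $a\in T_1$ and $b\in T_2$ (vertices, or points of edges) whose interior avoids $T_1\cup T_2$. Each door separates $R$ into two unbounded pieces. Concretely, the Jordan curve formed by the door, a finite path in $T_1$, a second door and a finite path in $T_2$ bounds a finite region.

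\emph{Step 2: ordering the doors.} We want a total order on a suitable locally finite, translation-covariant family $\mathcal D$ of doors, isomorphic to $(\Z,<)$, such that for doors $d<d'<d''$ the door $d'$ separates $d$ from $d''$ within $R$. This is the deterministic planar-topology statement, namely Theorem~\ref{thm_planar_ordering}. We would take it as given and choose $\mathcal D$, for instance, as the doors of length at most $L$ with endpoints among vertices. For large $L$ this family is nonempty with positive probability, hence infinite by stationarity.

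\emph{Step 3: every two-ended component crosses every door.} Let $C$ be two-ended, with its two ends $\pm$ escaping to infinity inside $R$. We need the two ends to go to opposite sides of the corridor, i.e.\ toward the two ``ends'' of the $\Z$-order. If both ends escaped on the same side of a door $d$, then by stationarity, applied through a Burton--Keane argument with Proposition~\ref{prop_burton_and_keane}, one would produce too many disjoint crossings of box boundaries. An alternative route is to use the separation property to show that $C$ together with the other side of the corridor is finite, which contradicts the fact that $C$ is infinite. Once the ends are on opposite sides, each door $d\in\mathcal D$ is crossed by the bi-infinite path of $C$: the path must meet $d$, and by planarity it cannot cross $T_1\cup T_2$.

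\emph{Step 4: counting.} A door of length at most $L$ meets only finitely many edges, and the expected number of such edges is controlled by $\Lambda$ via stationarity. In particular, each door is crossed by only finitely many components. But by Step 3 it is crossed by all $N_2=\infty$ two-ended components, which is a contradiction. Hence $N_2<\infty$.

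The main obstacles are Step 2, the existence of a $\Z$-ordered betweenness on doors, which is deferred to Theorem~\ref{thm_planar_ordering}, and Step 3, showing that the two ends of each bi-infinite component leave through opposite directions of the corridor. For Step 3 we would first try a trifurcation-type counting in which components that turn back inside the corridor are charged to the boundary edges of boxes, as in the proof of Proposition~\ref{prop_N1_leq_2}.
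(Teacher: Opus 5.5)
Your architecture is the paper's: doors ordered via Theorem~\ref{thm_planar_ordering}, every two-ended component crossing every door, and each door crossed finitely often. However, Step~2 fails as written.

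\textbf{Step 2: the door family.} The family of all doors of length at most $L$ with vertex endpoints does not satisfy the hypothesis of Theorem~\ref{thm_planar_ordering}, because such doors can cross. For example, let $[a,a']$ with $a=(0,0)$, $a'=(1,0)$ be an edge of $T_1$, let $[b',b]$ with $b'=(0,1)$, $b=(1,1)$ be an edge of $T_2$, and suppose nothing else meets the open unit square. Then both diagonals $[a,b]$ and $[a',b']$ are admissible doors, and they cross. For the associated lines $\gamma=\rho_1(a)\cup[a,b]\cup\rho_2(b)$ and $\gamma'$, where $\rho_i(x)$ is the ray of $T_i$ from $x$, the set $\gamma\setminus\gamma'$ misses the crossing point. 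It is therefore disconnected and not a portion, and your curve ``door, path in $T_1$, door, path in $T_2$'' is not a Jordan curve.

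The missing idea is to make doors sparse and localized. The paper places doors only at sites $2\ell d$, $d\in\Z^2$, on the event $D_{k,\ell}$ that both one-ended trees meet $[-k,k]^2$ and $\backward([-k,k]^2)\subset(-\ell,\ell)^2$. Lemma~\ref{lemma_backward_tree} then guarantees that the rays of one door avoid the box of every other door. Hence door segments never meet other doors' lines, and $\gamma_d\setminus\gamma_{d'}$ is a portion. Note that the segment need not avoid $T_1\cup T_2$ at all.

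\textbf{Step 2: the $\Z$-structure.} The theorem only gives a total order up to reversal, not $(\Z,<)$. Discreteness comes from local finiteness. The absence of extreme points needs a separate stationarity argument (Lemma~\ref{lemma_no_extremes}). You also need that every compact set lies in some $\Int J(\gamma_d,\gamma_{d'})$ (Lemma~\ref{lemma_compact}). That lemma is what guarantees every two-ended component meets some door, and what gives meaning to ``the two ends of the corridor''.

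\textbf{Step 3.} Your deterministic alternative cannot work. A two-ended component making a U-turn in the corridor (trace a half-line) is topologically consistent; only stationarity excludes it. Your Burton--Keane route does work once set up correctly, and it is a genuine alternative to the paper's argument.
\begin{itemize}
\item First prove deterministically, as in Proposition~\ref{prop_unbounded_convex}, that $\trace(C)$ is nonempty and convex. This uses Lemma~\ref{lemma_compact} and the last assertion of Theorem~\ref{thm_planar_ordering}, and you only gesture at it.
\item A convex nonempty $\trace(C)\neq\D$ then has an extreme point. Let $T$ be the event, measurable by Lemma~\ref{lemma_measurable} and invariant under reversal, that $0\in\D$ is extreme in some trace.
\item Each site of an $n\times n$ block where $T$ occurs yields a component meeting that door's box. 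That component must leave the enclosing box of side $O(\ell n)$ through its own edge, and it is charged at most twice.
\item This gives $n^2\P(T)=\E[\eta_n]\le 2\E[\chi]=O(n)$, hence $\P(T)=0$, so every trace equals $\D$.
\end{itemize}

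The paper instead uses finiteness of crossings per door to define the set $Y$ of at most two doors: the lowest element of $M^-$ and the highest of $M^+$. It concludes because no translation-covariant rule can select a nonempty finite set with positive probability (Proposition~\ref{prop_cross_all_doors}). Your count avoids that selection step, but needs the same convexity and exhaustion inputs.
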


Since $N_1$ is constant on an event of full-probability, let us assume without loss of generality in this section that $N_1(\omega)=2$ for all $\omega\in\Omega_0$. Before turning to the formal argument, we briefly describe the overall strategy. Owing to the planar structure, the region between the two one-ended components may be viewed as a \emph{corridor} equipped with a naturally ordered family of \emph{doors}, isomorphic to $(\Z,<)$. Then, every two-ended component lies in this corridor, and by \emph{finite edge intensity}, each door is crossed by only finitely many of them. This gives two consequences. First, only finitely many two-ended components can cross every door. Second, the existence of components that do not cross every door is almost surely impossible, since one could otherwise identify a smallest minimal or largest maximal crossed door among such components, which is incompatible with stationarity. Together, these observations yield that only finitely many distinct two-ended components can exist when there are two one-ended components. The remaining part of the section is dedicated to making this argument rigorous. Let us start by introducing some definitions. For all $1\leq k<\ell$, define the event
    \[D_{k,\ell}\coloneqq \{N_1([-k,k]^2)=2\}\cap\{\backward([-k,k]^2)\subset(-\ell,\ell)^2\}.\]
Then,
    \[1=\P(N_1=2)=\lim_{k\to\infty}\P[N_1([-k,k]^2)=2]=\lim_{k\to\infty}\lim_{\ell\to\infty}\P(D_{k,\ell}),\]
and we can choose $1\leq k<\ell$ such that
    \[\P(D_{k,\ell})>0.\]
From now on, we fix such values of $k$ and $\ell$. For each $\omega\in D_{k,\ell}$, let $u_1$ denote the westmost-southmost (i.e., minimizing first the horizontal coordinate and then the vertical coordinate) vertex of $V_1\cap [-k,k]^2$. Similarly, let $u_2$ denote the westmost-southmost vertex of $[-k,k]^2$ that belongs to the other one-ended component of $G$. Set
    \[\pi_{u_1}^+\coloneqq\pi_{u_1}[\cdot-t^+_{u_1}([-k, k]^2)]\quad\text{and}\quad\pi_{u_2}^+\coloneqq\pi_{u_2}[\cdot-t^+_{u_2}([-k, k]^2)].\]
In words, $\pi_{u_1}^+$ and $\pi_{u_2}^+$ corresponds to the continuous simple path started at $u_1$ and $u_2$ after their last intersection with $[-k,k]^2$. This is well defined since $\lim_{t\to\infty}\|\pi_{u_1}(t)\|=\infty$ for each $i\in\{1,2\}$. We can now properly define \emph{doors} as follows.

\begin{definition}[Doors]
    Let us define the set of \emph{doors} as the random variable
        \[\D:\omega\in\Omega\mapsto\D(\omega)\coloneqq\{d\in\Z^2:\theta_{-2\ell d}(\omega)\in D_{k,\ell}\}.\]
    For each $d\in\D$ and $i\in\{1,2\}$, set also 
        \[\pi_{d, i}^+\coloneqq 2\ell d+\pi_{u_i}^+(\cdot)\circ\theta_{-2\ell d}.\]
\end{definition}

Intuitively, one can view doors as locations of the space where the two one-ended components of $G$ are close in some suitable sense. Note that although $\D$ may not be almost surely non-empty, one has
    \[\P(\D\neq\emptyset)\geq\P(0\in\D)=\P(D_{k,\ell})>0.\]
We now want to argue that because of the planar structure, the two one-ended components of $G$ induce a natural order on the set of doors. To do so, we first introduce the following definition.

\begin{definition}[Topological lines and portions]
    A subset $\gamma\subset\R^2$ is called a \emph{topological line} if there exists a homeomorphism $\Phi:\R\to\gamma$. When $\gamma\subset\R^2$ is a topological line, a subset $\pi\subset\gamma$ is called a \emph{portion} of $\gamma$ if $\pi$, in the subspace topology of $\gamma$, is open, connected, and relatively compact. Equivalently, $\pi\subset\gamma$ is a portion of the topological line $\gamma\subset\R^2$ if for some and hence any homeomorphism $\Phi:\R\to\gamma$, the set $\Phi^{-1}(\pi)$ is a bounded open interval of $\R$.
\end{definition}

Next, we define topological lines associated with the doors. Fix $\omega\in\Omega_0$. For every $d\in\D$ and $i\in\{1,2\}$, the function $\pi_{d,i}^+$ is continuous, injective, satisfies $\lim_{t\to\infty}\|\pi_{d, i}^+(t)\|=\infty$ and hence realizes an homeomorphism on its image. Therefore, by construction, for each $d\in\D$, the set
    \[\gamma_d\coloneqq\pi_{d,1}^+(\R_+)\cup[\pi_{d,1}^+(0), \pi_{d,2}^+(0)]\cup\pi_{d,2}^+(\R_+)\]
is a topological line. Furthermore, for all distinct $d,d'\in\D$, one has $\gamma_d\neq\gamma_{d'}$ and the difference
    \[\gamma_d\setminus\gamma_{d'}\]
is a portion of $\gamma_d$. Indeed, if $d,d'\in\D$ are distinct, then both $\pi_{d,1}^+$ and $\pi_{d,2}^+$ avoid $2\ell d'+[-k,k]^2$ using Lemma~\ref{lemma_backward_tree}, since by construction they correspond to continuous paths in one-ended components started outside $\backward(2\ell d'+[-k,k]^2)\subset 2\ell d'+(-\ell,\ell)^2$. Hence $\gamma_d\neq\gamma_{d'}$, and since $N_2=1$, then the continuous paths $\pi_{d,1}^+$ and $\pi_{d,2}^+$ must coalesce with $\pi_{d',1}^+$ and $\pi_{d',2}^+$, or $\pi_{d',2}^+$ and $\pi_{d',1}^+$, respectively. One can then verify this yields that $\gamma_d\setminus\gamma_{d'}$ is a portion of $\gamma_d$. Now, define the random set of topological lines 
    \[\Gamma\coloneqq\{\gamma_d:d\in\D\}.\]
By construction, $\Gamma$ is in natural bijection with $\D$ via $d\mapsto\gamma_d$, and satisfies the property that for all $\omega\in\Omega_0$ and $\gamma,\gamma'\in\Gamma(\omega)$, $\gamma\setminus\gamma'$ is a portion of $\gamma$. The following general deterministic theorem ensures that these properties are sufficient to exploit planarity and endow $\Gamma(\omega)$, and hence $\D(\omega)$, with a natural total order. Its proof is deferred to Section~\ref{section_planar_ordering}.

\begin{theoremI}[Planar ordering]
    \label{thm_planar_ordering}
    Let $\Gamma$ be a set of topological lines of the plane, such that for all distinct $\gamma,\gamma'\in\Gamma$, the difference $\gamma\setminus\gamma'$ is a portion of $\gamma$. Then, for all distinct $\gamma,\gamma'\in\Gamma$, denoting by $\triangle$ the symmetric difference operator, the set
        \[J(\gamma,\gamma')\coloneqq\overline{\gamma\triangle\gamma'}\]
    is a Jordan curve. Furthermore, setting
        \[(\gamma,\gamma')_\Gamma\coloneq\{\sigma\in\Gamma\setminus\{\gamma,\gamma'\}:J(\gamma,\sigma)\subset\overline{\mathrm{Int}} J(\gamma,\gamma')\}\]
    for all distinct $\gamma,\gamma'\in\Gamma$, the ternary relation $\cdot\in(\cdot,\cdot)_\Gamma$ induces a \emph{betweenness} structure on $\Gamma$, that is, there exists a strict total order $<_\Gamma$ on $\Gamma$, unique up to reversal, such that for all distinct $\gamma,\gamma',\sigma\in\Gamma$,
        \[\sigma\in(\gamma,\gamma')_\Gamma\iff[\gamma<_\Gamma\sigma<_\Gamma\gamma'\text{ or }\gamma'<_\Gamma\sigma<_\Gamma\gamma].\]
    Finally, for all distinct $\gamma,\gamma',\sigma\in\Gamma$, $\sigma\notin(\gamma,\gamma')_\Gamma\iff J(\gamma,\sigma)\subset\clExt J(\gamma,\gamma')$.
\end{theoremI}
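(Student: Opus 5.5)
Fix a homeomorphism $\Phi_\gamma:\R\to\gamma$ for each $\gamma\in\Gamma$. The plan is to show first that two distinct lines $\gamma,\gamma'$ coincide outside a compact set in a controlled way, then to reduce betweenness to a statement about finitely many lines, and finally to extract the order from a consistent family of local orders.

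\textbf{Step 1: $J(\gamma,\gamma')$ is a Jordan curve.} Write $\gamma\setminus\gamma'=\Phi_\gamma((a,b))$ and $\gamma'\setminus\gamma=\Phi_{\gamma'}((a',b'))$. Then $\gamma\cap\gamma'$ consists of the two closed rays $\Phi_\gamma((-\infty,a])$ and $\Phi_\gamma([b,\infty))$. The same set is also the union of the two rays of $\gamma'$ outside $(a',b')$. Since each ray is connected and unbounded, the rays must match up pairwise. The endpoints therefore coincide: $\{\Phi_\gamma(a),\Phi_\gamma(b)\}=\{\Phi_{\gamma'}(a'),\Phi_{\gamma'}(b')\}$. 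Hence $\overline{\gamma\triangle\gamma'}$ is the union of two arcs with common endpoints and disjoint interiors, which is a Jordan curve. The same analysis shows the following: for three lines $\gamma,\gamma',\sigma$, the curves $J(\gamma,\gamma')$, $J(\gamma,\sigma)$ and $J(\gamma',\sigma)$ are all contained in $K\coloneqq\gamma\cup\gamma'\cup\sigma$ minus a common pair of rays. That is, all three curves live in the finite graph-like set formed by a bounded part of $\gamma\cup\gamma'\cup\sigma$.

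\textbf{Step 2: the three-line trichotomy.} For distinct $\gamma,\gamma',\sigma$, I would prove that exactly one of the three lines is between the other two. I would also prove the dichotomy $J(\gamma,\sigma)\subset\clInt J(\gamma,\gamma')$ or $J(\gamma,\sigma)\subset\clExt J(\gamma,\gamma')$, which is the last claim of the theorem. For the dichotomy, $J(\gamma,\sigma)\setminus J(\gamma,\gamma')$ consists of pieces of $\sigma$ together with pieces of $\gamma$ lying on $J(\gamma,\gamma')$. The open arc $\sigma\setminus(\gamma\cup\gamma')$ is connected and disjoint from $J(\gamma,\gamma')$, because $\sigma$ agrees with $\gamma$ or $\gamma'$ near its ends. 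A $\theta$-graph argument then applies: the union of three arcs sharing endpoints bounds exactly three faces. The key tool is the theta-curve theorem, or Jordan–Schoenflies, applied to $\overline{\gamma\cup\gamma'\cup\sigma}$ minus its unbounded common rays. One compactifies by adding the point at infinity to get a finite planar graph on $\mathbb S^2$, and reads off the faces. Among the three Jordan curves $J(\cdot,\cdot)$ of a theta graph, exactly one has the other two in its closed interior. That curve's defining pair is the outer pair, and the remaining line is between them.

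Step 2 is the main obstacle. The lines may share arcs in complicated ways: $\sigma$ can split from $\gamma$, merge into $\gamma'$, and the divergence points may coincide. So the "theta graph" can degenerate into a graph with more vertices. I would handle this by reducing to a planar graph $H$ with at most six branch vertices. The branch vertices are the endpoints of the portions, plus infinity on $\mathbb S^2$. I would then use the Jordan curve theorem for the cycles of $H$, following the style of the Jordan-curve manipulations in Proposition~\ref{prop_N1_leq_2}, to check the claimed inclusions case by case.

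\textbf{Step 3: from betweenness to a total order.} Step 2 shows that the ternary relation satisfies the trichotomy for triples. To obtain a linear betweenness relation, I would also verify the four-point axioms: if $\sigma\in(\gamma,\gamma')$ and $\tau\in(\gamma,\sigma)$, then $\tau\in(\gamma,\gamma')$ and $\sigma\in(\tau,\gamma')$. The proof uses monotonicity of closed interiors, since $J(\gamma,\tau)\subset\clInt J(\gamma,\sigma)\subset\clInt J(\gamma,\gamma')$, together with the same finite-graph analysis for four lines. A classical result (Huntington) says that a ternary relation satisfying these axioms comes from a strict total order, unique up to reversal. Concretely, fix $\gamma_0$ and a second line $\gamma_1$. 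Define $\sigma<\tau$ by comparing positions relative to $\gamma_0$, using containment of the curves $J(\gamma_0,\cdot)$ on the side of $\gamma_1$ and reversing on the other side. Transitivity and totality then follow from the four-point axioms, and any other compatible order agrees with this one or with its reverse.
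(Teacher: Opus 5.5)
Your route is different from the paper's. You identify the three-line configuration as a theta graph, read everything off the theta-curve theorem, and then invoke Huntington's four-point axioms. The paper never describes the whole configuration: it proves directly, via adapted parametrizations, that $J(\gamma,\sigma')\setminus J(\gamma,\sigma)$ is a connected subset of $J(\sigma,\sigma')$, and that $\clInt J(\gamma,\sigma')\setminus\clInt J(\gamma,\sigma)\subset\clInt J(\sigma,\sigma')$. Your approach can be made to work, but its two load-bearing steps are asserted rather than proved.

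In Step 2, everything hinges on $\sigma\setminus(\gamma\cup\gamma')$ being a \emph{non-empty} arc. Connectedness is trivial, since it is $\Phi_\sigma$ of the intersection of the intervals $\Phi_\sigma^{-1}(\sigma\setminus\gamma)$ and $\Phi_\sigma^{-1}(\sigma\setminus\gamma')$. Non-emptiness of that intersection is the real content, and ``$\sigma$ agrees with $\gamma$ or $\gamma'$ near its ends'' does not give it. It needs the hypothesis for the remaining pair $(\gamma,\gamma')$: if the two intervals were disjoint, $\gamma\setminus\gamma'$ would be disconnected, and the three curves would not form a theta graph at all. This is the non-empty-portion claim of Proposition~\ref{prop_arc_diff}, proved in the paper via the second part of Lemma~\ref{lemma_adapted_homeo}.

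Once this overlap is known, each pair of lines separates and rejoins exactly once. That makes the union of the three curves a subdivided theta whose cycles are exactly the three $J$'s. So the degenerate graphs with up to six branch vertices that you plan to treat case by case do not occur, and excluding them is exactly where the hypothesis on all three pairs must be used. Note also that the set you need on one side of $J(\gamma,\gamma')$ is all of $J(\gamma,\sigma)\setminus J(\gamma,\gamma')$. Besides $\sigma\setminus(\gamma\cup\gamma')$, it contains arcs of $\gamma\cap\gamma'$ and of $(\sigma\cap\gamma')\setminus\gamma$. Its connectedness is the third claim of Proposition~\ref{prop_arc_diff}; in your approach it again comes from the theta structure.

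In Step 3, the half $\sigma\in(\tau,\gamma')_\Gamma$ of the four-point axiom is deferred to an unspecified four-line graph analysis. It does not follow from symmetry, trichotomy and the monotone half: on four points there are ternary relations satisfying all three that are not induced by any order. So genuine geometric input is needed, but no four-line case analysis is. By trichotomy for $\{\tau,\sigma,\gamma'\}$, you only have to exclude two options.
\begin{itemize}
\item If $\gamma'\in(\sigma,\tau)_\Gamma$, monotonicity gives $J(\sigma,\gamma')\subset\clInt J(\sigma,\tau)\subset\clInt J(\sigma,\gamma)$, i.e.\ $\gamma'\in(\gamma,\sigma)_\Gamma$. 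This contradicts trichotomy for $\{\gamma,\sigma,\gamma'\}$.
\item If $\tau\in(\sigma,\gamma')_\Gamma$, any point of $\tau\setminus(\gamma\cup\sigma\cup\gamma')$ lies in $\Int J(\sigma,\gamma)\cap\Int J(\sigma,\gamma')$. But $\gamma\notin(\sigma,\gamma')_\Gamma$ and $\gamma'\notin(\sigma,\gamma)_\Gamma$ put each of these two distinct Jordan curves in the closed exterior of the other, which forces their interiors to be disjoint.
\end{itemize}
Non-emptiness of $\tau\setminus(\gamma\cup\sigma\cup\gamma')$ follows from the pairwise overlap above and the Helly property of intervals. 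The paper packages the same input as the three-line inclusion of Proposition~\ref{prop_interior_diff}, together with non-emptiness of $\gamma\setminus\bigcup_{\sigma\in\Gamma_0}\sigma$ for finite $\Gamma_0$.
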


\begin{figure}[ht]
    \label{fig_doors}
    \centering
    \includegraphics[height=0.3\linewidth]{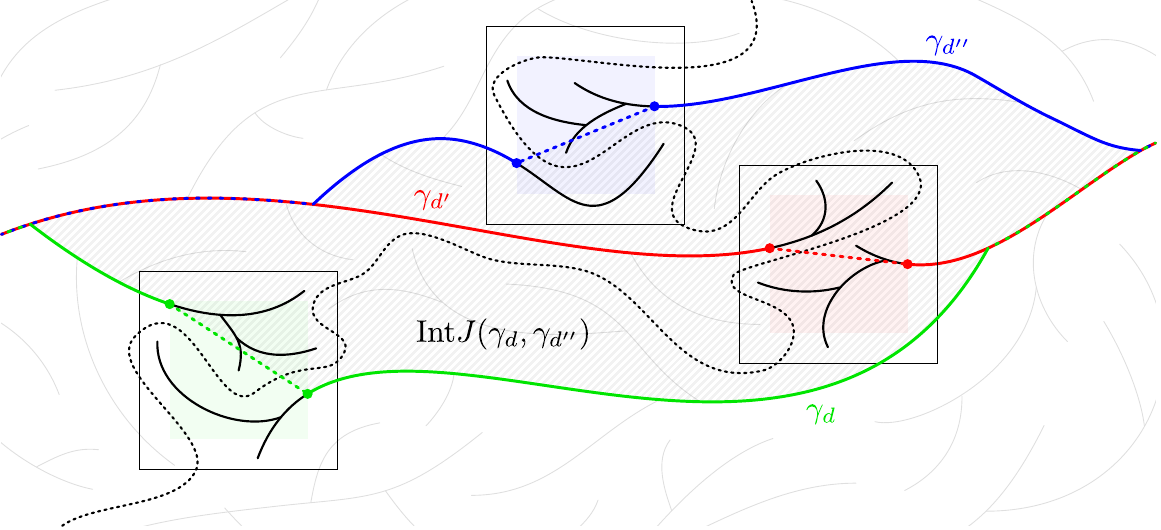}
    \caption{Illustration of the door construction for a fixed realization $\omega\in\Omega_0$ where the doors $d, d', d''\in\mathcal D$ satisfy $d'\in(d, d'')_\Gamma$. Bold black lines indicate $\backward(2\ell z+[-k, k]^2)$ for $z\in\{d, d', d''\}$. The dotted black line represents a two-ended component of $G$. Informally, the red, green, and blue lines delimit the \emph{corridor}, while the corresponding dotted segments serve as the \emph{doors}. That $d'$ lies between $d$ and $d''$ can be seen in the picture from the fact that the red dotted segment representing $d'$ lies in the interior of the corridor section delimited by $\gamma_d$ and $\gamma_{d''}$, hatched in gray.}
\end{figure}

From now on, we denote by $<_\D$ the order on $\D$ induced by $<_\Gamma$ via the bijection $d\mapsto\gamma_d$. Intuitively, for distinct $d,d'\in\D(\omega)$, the Jordan curve $J(\gamma_d,\gamma_{d'})$ may be viewed as the boundary of the corridor section between the corresponding doors. Then, 
    \[(d,d')_\D\coloneqq\{d''\in\D(\omega):\gamma_{d''}(\omega)\in(\gamma_d,\gamma_{d'})_\Gamma\}\]
represents precisely the set of doors lying strictly inside this corridor section. An illustration is provided in Figure~\ref{fig_doors}. One important remark is that for each $\omega\in\Omega_0$, the ordered set
    \[\text{$(\D,<_\D)$ is discrete},\]
in the sense that for any distinct $d,d'\in\D$, $(d,d')_\D$ is finite. Indeed, for all $\omega\in\Omega_0$, if $d,d'\in\D$ are distinct, one can observe that by construction, for each $d''\in\D\setminus\{d,d'\}$, $2\ell d''+[-k,k]^2$ intersects $\gamma_{d''}$ but avoids both $\gamma_d$ and $\gamma_{d'}$. Therefore, if $d''\in(d, d')_\D$, one must have $2\ell d''\in\Int J(\gamma_d, \gamma_{d'})$ which implies that $(d,d')_\D$ is finite since $\#\{k\in\Z^2:2\ell k\in\Int J(\gamma_d, \gamma_{d'})\}<\infty$ as $\Int J(\gamma_d, \gamma_{d'})$ is bounded. 

Before proceeding further, we record the following observation. Although the total order $<_\D$ on $\D$ is only defined up to reversal for every $\omega\in\Omega_0$, and hence arbitrarily chosen, the associated betweenness relation is uniquely determined and measurable, as stated precisely in the lemma below.

\begin{lemma}
    \label{lemma_measurable}
    For all distinct $d,d',d''\in\Z^2$, the map $\omega\in\Omega\mapsto\1_{d\in\D}\1_{d'\in\D}\1_{d''\in(d,d')_\D}$ is measurable.
\end{lemma}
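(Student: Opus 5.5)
The plan is to express the indicator through measurable operations on the random graph $G$. The only non-obvious ingredient is the Jordan-curve containment $J(\gamma_d,\gamma_{d''})\subset\clInt J(\gamma_d,\gamma_{d'})$. By the last assertion of Theorem~\ref{thm_planar_ordering}, on $\{d,d',d''\in\D\}\cap\Omega_0$ the three curves involved are Jordan curves. Moreover, exactly one of $J(\gamma_d,\gamma_{d''})\subset\clInt J(\gamma_d,\gamma_{d'})$ and $J(\gamma_d,\gamma_{d''})\subset\clExt J(\gamma_d,\gamma_{d'})$ holds. It therefore suffices to detect a single point of $J(\gamma_d,\gamma_{d''})\setminus J(\gamma_d,\gamma_{d'})$ and to decide whether it lies in $\Int$ or $\Ext$. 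For instance, one can take the midpoint of the door segment $[\pi^+_{d'',1}(0),\pi^+_{d'',2}(0)]$. This point lies in $J(\gamma_d,\gamma_{d''})$ and, by the discussion preceding the lemma, it avoids $\gamma_d\cup\gamma_{d'}$. The indicator then equals $\1_{d,d',d''\in\D}\1_{p\in\Int J(\gamma_d,\gamma_{d'})}$ with $p$ that midpoint, up to a null set (absorbed by completing $\mathcal F$ if necessary, or by working on $\Omega_0$ assumed measurable).

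The first step is to check that $\{d\in\D\}=\theta_{2\ell d}^{-1}\ldots$ is measurable. This reduces to measurability of $D_{k,\ell}$, which holds because $N_1([-k,k]^2)$ and the event $\{\backward([-k,k]^2)\subset(-\ell,\ell)^2\}$ are determined by countably many counting functions: whether a vertex lies in a one-ended component, and which vertices belong to $\backward(v)$, are expressible via finite path conditions in the finite-degree, locally finite graph. Together with measurability of $\theta$, this gives the first step. The second step is to show that the points $u_1,u_2$ (westmost-southmost rules) and the polygonal paths $\pi^+_{u_i}$ are measurable functions on $D_{k,\ell}$. Each is determined by the successive vertices of the unique ray, and the $n$-th vertex of that ray is a measurable random point, by a standard discretization of the configuration space $\mathcal G$ (countable dyadic partitions). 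Consequently each $\pi^+_{d,i}$ is a measurable map into $C(\R_+,\R^2)$.

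The third step, which I expect to be the main technical obstacle, is to show that $\1_{p\in\Int J}$ is measurable. The curve $J(\gamma_d,\gamma_{d'})$ is a finite polygon, since $\gamma_d\triangle\gamma_{d'}$ has compact closure and consists of finitely many edges of $G$ plus two door segments. I would use the characterization that $p\in\Int J$ iff the winding number (or ray-crossing parity) of $J$ around $p$ is odd. The polygon is determined by finitely many edges of the two rays up to their coalescence times. These coalescence indices are measurable, being first indices where the vertex sequences of two rays agree thereafter, which is a countable condition. For each fixed value of the combinatorial data, the parity is a measurable, indeed piecewise continuous, function of the finitely many measurable vertices, so summing over the countably many cases gives measurability. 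The key point to verify carefully is that the midpoint $p$ truly avoids $J(\gamma_d,\gamma_{d'})$ on $\Omega_0$; this follows from Lemma~\ref{lemma_backward_tree} as in the text. It also matters that the characterization is independent of the arbitrary orientation of $<_\D$, since it depends only on the curves themselves.
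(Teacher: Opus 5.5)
Your proof is correct, and it takes a different route from the paper's.

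\textbf{The paper's route.} The paper gives a countable characterization of membership in $\Int J$ that holds for an arbitrary Jordan curve: escape to a far-away region along polygonal paths with rational intermediate vertices. It then simply asserts that the containment $J(\gamma_d,\gamma_{d''})\subset\clInt J(\gamma_d,\gamma_{d'})$ admits a countable description. How the quantifier over the uncountably many points of $J(\gamma_d,\gamma_{d''})$ becomes countable is left implicit; one could use density together with closedness of $\clInt J$.

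\textbf{Your route.} You remove that quantifier directly. The dichotomy of Corollary~\ref{coro_containment_equiv} (the last assertion of Theorem~\ref{thm_planar_ordering}) lets you test a single point. The Jordan property itself comes from the theorem's first assertion, not its last.
\begin{itemize}
\item Your test point is $p$, the midpoint of the door segment of $d''$. It lies in $\gamma_{d''}\setminus(\gamma_d\cup\gamma_{d'})\subset J(\gamma_d,\gamma_{d''})\setminus J(\gamma_d,\gamma_{d'})$, because the segment sits in $2\ell d''+[-k,k]^2$, which $\gamma_d$ and $\gamma_{d'}$ avoid.
\item You then decide whether $p\in\Int J(\gamma_d,\gamma_{d'})$ from the parity of the winding number of an explicit finite closed polygon. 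That winding number is locally constant in the finitely many measurable vertices.
\end{itemize}

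\textbf{What each buys.} Your version makes every measurability step checkable. It is also the same observation the paper uses right afterwards for discreteness, namely $2\ell d''\in\Int J(\gamma_d,\gamma_{d'})$. The paper's version is shorter and purely topological. It needs neither the polygonal structure of $J$ nor the specific door geometry, so it fits the curved-edge setting of Remark~\ref{remark_extension_edge_curves} with no change.

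\textbf{Minor corrections.}
\begin{itemize}
\item The polygon also includes partial initial edges, since $\pi^+_{d,i}(0)$ need not be a vertex.
\item The correct identity is $\{d\in\D\}=\theta_{-2\ell d}^{-1}(D_{k,\ell})$.
\end{itemize}
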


\begin{proof}
    Fix distinct $d,d',d''\in\Z^2$. Measurability of $\omega\mapsto\1_{d\in\D}$ and $\omega\mapsto\1_{d'\in\D}$ is immediate. It therefore suffices to show that, fixing $\omega\in\Omega$ and assuming $d,d'\in\D$, the condition $d''\in(d,d')_\D$, that is
        \[J(\gamma_d,\gamma_{d''})\subset\clInt J(\gamma_d,\gamma_{d'}),\]
    can be expressed in a measurable way. This follows from the fact that membership in the interior $\Int J$ of a Jordan curve $J\subset\R^2$ admits a countable characterization. Indeed, open sets in $\R^2$ are polygonally path connected with intermediate points in $\Q^2$. Hence, for any $z\in\R^2$, one has $z\in\Int J$ if and only if every polygonal path starting at $z$ and ending outside the closed ball of radius $\sup_{y\in J}\|y\|$, with vertices in $\Q^2$ except for $z$, intersects $J$. One can then deduce a countable description of the event of interest, and hence its measurability.
\end{proof}

In what follows, after justifying that all statements can equivalently be formulated in terms of the betweenness relation alone, we will use $<_\D$, which is more intuitive and convenient to handle. We now introduce the key notion of \emph{extreme points}.

\begin{definition}[Extreme points]
    For any subset $X\subset\D$, denote by $\partial_{\D}(X)$ its set of extreme points, that is, elements $x\in X$ which are either a maximum or a minimum in $X$ with respect to $<_\D$. Since this definition is invariant under reversal of the order $<_\D$, it can equivalently be expressed using only the associated betweenness relation.
\end{definition}

Next, we establish in the following lemma that $\D$ admits almost surely no extreme points.

\begin{lemma}
    \label{lemma_no_extremes}
    The set of doors $\D$ has almost surely no extreme points, i.e., $\P[\partial_\D(\D)\neq\emptyset]=0$.
\end{lemma}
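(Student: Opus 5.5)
We argue by a mass-transport / stationarity argument in the spirit of Lemma~\ref{lemma_N2_directed_dichotomy}. The extreme points of $\D$ are determined by the betweenness relation alone, so Lemma~\ref{lemma_measurable} makes the event $\{d\in\partial_\D(\D)\}$ measurable for each $d\in\Z^2$. Indeed, $d\in\partial_\D(\D)$ if and only if $d\in\D$ and there are no $d',d''\in\D$ with $d\in(d',d'')_\D$, which is a countable condition. Since $(\D,<_\D)$ is a total order, $\partial_\D(\D)$ contains at most two elements (one minimum and one maximum) on $\Omega_0$. Hence
\[
\#\partial_\D(\D)\leq 2 \quad\text{almost surely.}
\]

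We then use covariance under translations. By construction, $\D\circ\theta_{-2\ell z}=\D-z$ for $z\in\Z^2$, and the lines satisfy $\gamma_{d}\circ\theta_{-2\ell z}=\gamma_{d+z}-2\ell z$. Translating the plane preserves Jordan interiors, so the betweenness relation is also covariant:
\[
d''\in(d,d')_\D\circ\theta_{-2\ell z}\iff d''+z\in(d+z,d'+z)_\D .
\]
Consequently $\1_{z\in\partial_\D(\D)}=\1_{0\in\partial_\D(\D)}\circ\theta_{-2\ell z}$. Stationarity with respect to the shifts $\theta_{-2\ell z}$ then gives
\[
2\geq\E\bigl[\#\partial_\D(\D)\bigr]=\sum_{z\in\Z^2}\P\bigl(z\in\partial_\D(\D)\bigr)=\sum_{z\in\Z^2}\P\bigl(0\in\partial_\D(\D)\bigr).
\]
The series has infinitely many identical terms and is bounded, so $\P(0\in\partial_\D(\D))=0$. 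Taking a countable union over $z\in\Z^2$ yields $\P[\partial_\D(\D)\neq\emptyset]=0$.

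The main point to check carefully is the covariance of the door construction. The vertices $u_1,u_2$ are defined via westmost-southmost choices inside $[-k,k]^2$, and these choices commute with translation. The lines $\gamma_d$ are defined from $\pi^+_{u_i}$ shifted by $2\ell d$. So the whole structure $(\D,(\cdot,\cdot)_\D)$ at $\theta_{-2\ell z}\omega$ is exactly the $z$-shift of that at $\omega$, and I expect this verification to be routine. I also expect measurability to follow from Lemma~\ref{lemma_measurable} via the countable expression above. An alternative that avoids counting would assign to each extreme door its neighbour, but the bound $\#\partial_\D(\D)\leq 2$ already suffices.
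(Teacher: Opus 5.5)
Your proof is correct and is essentially the paper's argument: both use covariance of $\D$ under the lattice shifts $\theta_{-2\ell z}$ together with $\#\partial_\D(\D)\leq 2$ to show that a stationary configuration of at most two distinguished doors must be empty almost surely. The paper selects a single point (the barycenter $X\in\frac12\Z^2$ of $\partial_\D(\D)$) and argues $\sum_{i}\P(X=i)\in\{0,\infty\}$, whereas you bound $\E[\#\partial_\D(\D)]$ by $2$; this is equivalent, and it avoids the paper's slightly loose claim that $\P(X=i)=\P(X=0)$ for every $i\in\frac12\Z^2$, since stationarity only gives this within cosets of $\Z^2$.
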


\begin{proof}
    First, observe that $\partial_\D(\D)$ is a measurable set using Lemma~\ref{lemma_measurable}. Then, consider the random variable
        \[X\coloneqq\begin{cases}
            \frac1{\#\partial_\D(\D)}\sum_{j\in\partial_\D(\D)}j &\text{if $\partial_\D(\D)\neq\emptyset$,}\\
            \infty &\text{otherwise}.
        \end{cases}\]
    Since $\#\partial_\D[\D(\omega)]\leq 2$ by definition, one has $\partial_\D(\D)\neq\emptyset\iff X\in\frac12\Z^2$. Therefore, observing that $\P(X=0)=\P(X=i)$ for all $i\in\frac12\Z^2$ by \emph{stationarity}, one gets
        \[\P[\partial_\D(\D)\neq\emptyset]=\P(X\in\tfrac12\Z^2)=\sum_{i\in\frac12\Z^2}\P(X=i)=\sum_{i\in\frac12\Z^2}\P(X=0)\in\{0,\infty\},\]
    which gives $\P[\partial_\D(\D)\neq\emptyset]=0$.
\end{proof}

It is worth mentioning that thanks to the previous lemma, for each $\omega\in\Omega_0$ such that $\D\neq\emptyset$, the ordered set $(\D,<_\D)$ is isomorphic to $(\Z,<)$. We show in the following lemma another key feature of $(\D,<_\D)$, namely that any compact region of the plane can be surrounded by the corridor section associated with sufficiently far-apart doors.

\begin{lemma}
    \label{lemma_compact}
    Fix $\omega\in\Omega_0$ such that $\D\neq\emptyset$. For any compact $K\subset\R^2$, there exist distinct $d,d'\in\D$ such that $K\subset\Int J(\gamma_d,\gamma_{d'})$.
\end{lemma}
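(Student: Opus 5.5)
The plan is to take a large box containing $K$, show that the corridor boundaries $J(\gamma_d,\gamma_{d'})$ eventually avoid it, and then show that the box lies on the interior side. Fix $\omega\in\Omega_0$ with $\D\neq\emptyset$. By Lemma~\ref{lemma_no_extremes}, we may assume $\Omega_0$ is chosen so that $\D$ has no extreme points. Since $(\D,<_\D)$ is discrete, it is then order-isomorphic to $(\Z,<)$. Enumerate the doors increasingly as $(d_n)_{n\in\Z}$ and set
\[J_n\coloneqq J(\gamma_{d_{-n}},\gamma_{d_n})\quad\text{for } n\geq1.\]
Fix a reference door $d_0$. Replacing $K$ by a closed square containing both $K$ and $2\ell d_0+[-k,k]^2$, we may assume that $K$ is connected and contains this box. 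It then suffices to show that, for $n$ large, $J_n\cap K=\emptyset$ and $K$ meets $\Int J_n$. Since $K\setminus J_n=K$ is connected, this forces $K\subset\Int J_n$, and we take $d=d_{-n}$, $d'=d_n$.

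\textbf{Step 1: $J_n$ avoids $K$ for large $n$.} By definition, $J_n\subset\gamma_{d_{-n}}\cup\gamma_{d_n}$. Each $\gamma_{d_m}$ consists of the door segment inside $2\ell d_m+[-k,k]^2$ and of two rays $\pi^+_{d_m,1},\pi^+_{d_m,2}$. These rays are portions of the infinite simple paths $\pi_{u}$ starting at the vertices $u=2\ell d_m+u_i\circ\theta_{-2\ell d_m}$, which lie in one-ended components. Only finitely many edges meet $K$, by finite edge intensity. If such an edge $[a,b]$, traversed from $a$ to $b$, lies on $\pi_u$, then the infinite simple path from $u$ passes through $b$. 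Since $u\in V_1$, every infinite simple path from $u$ shares its tail with $\pi_u$, so it also passes through $b$, hence $u\in\backward(b)$. By Lemma~\ref{lemma_backward_tree} this set is finite. Distinct doors $d_m$ give distinct starting vertices, because the boxes $2\ell d_m+(-\ell,\ell)^2$ are pairwise disjoint. Hence only finitely many $m$ have a ray meeting $K$. Moreover, $\{m: 2\ell d_m+[-k,k]^2 \text{ meets } K\}$ is finite. Therefore $\gamma_{d_m}\cap K=\emptyset$ for $|m|$ large, and so $J_n\cap K=\emptyset$ for $n$ large.

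\textbf{Step 2: $K$ meets $\Int J_n$.} For $n\geq1$, $d_0$ lies strictly between $d_{-n}$ and $d_n$. By Theorem~\ref{thm_planar_ordering}, $J(\gamma_{d_{-n}},\gamma_{d_0})\subset\clInt J_n$. This curve contains the door segment $[\pi^+_{d_0,1}(0),\pi^+_{d_0,2}(0)]$, because that segment lies in $\gamma_{d_0}$ but not in $\gamma_{d_{-n}}$: the rays of $\gamma_{d_{-n}}$ avoid $2\ell d_0+[-k,k]^2$, as already observed before Theorem~\ref{thm_planar_ordering}. The same observation applied to $\gamma_{d_n}$ shows that this segment avoids $J_n$. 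Hence it lies in $\Int J_n$, and it is contained in $K$. Combined with Step 1, $K\subset\Int J_n$ for $n$ large.

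\textbf{Main obstacle.} I expect the delicate point to be Step 1, namely that the rays issued from far-away doors cannot keep returning near $K$. The argument above handles this through finiteness of pendant trees (Lemma~\ref{lemma_backward_tree}). This requires checking carefully that an edge on $\pi_u$ really forces $u\in\backward(b)$ for the appropriate endpoint $b$ of that edge, and that starting vertices of distinct doors are distinct.
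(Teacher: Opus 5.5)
Your proof is correct, and it takes a different route from the paper's.

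\textbf{The paper's route.} It builds an auxiliary topological line $\gamma^*$ from two vertices of distinct one-ended components in a box $[-R,R]^2\supset K$. It then removes from $\Gamma$ the finitely many doors $\D_0$ whose boxes meet $(-L,L)^2\supset\backward([-R,R]^2)$. Next it applies Theorem~\ref{thm_planar_ordering} to the modified family $\Gamma'=(\Gamma\setminus\{\gamma_d:d\in\D_0\})\cup\{\gamma^*\}$. Since $\Gamma'$ is discrete and differs from $\Gamma$ by finitely many elements, it has no extreme points. So $\gamma^*$ is sandwiched between two far doors $d,d'$. The box around $K$ meets $\gamma^*$ but avoids $\gamma_d\cup\gamma_{d'}$, so it lies in $\Int J(\gamma_d,\gamma_{d'})$.

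\textbf{Your route.} You anchor the sandwich at an existing door $d_0$ instead. You use the $(\Z,<)$-enumeration of $\D$ and enlarge $K$ to a square containing $2\ell d_0+[-k,k]^2$. You show that $\gamma_{d_{\pm n}}$ avoids $K$ for large $n$, using finiteness of the sets $\backward(b)$ over the endpoints $b$ of the finitely many edges meeting $K$, together with disjointness of the boxes $2\ell d+(-\ell,\ell)^2$. Betweenness then places the door segment of $d_0$ in $\Int J_n$. The point you flagged as delicate is immediate: in a one-ended tree, $\pi_u$ is the unique infinite simple path from $u$, so $u\in\backward(b)$ for every vertex $b$ on it.

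\textbf{What each buys.} Your argument never leaves the family $\Gamma$. You need not re-verify the hypotheses of Theorem~\ref{thm_planar_ordering} for $\Gamma'$. Nor must you justify that $<_\Gamma$ and $<_{\Gamma'}$ are compatible and that the no-extreme-point property transfers, which the paper asserts rather briefly. You also avoid choosing auxiliary vertices via $N_1([-R,R]^2)=2$. The paper's construction instead places a ``virtual door'' directly at $K$, which is closer to the corridor intuition and needs no enlargement of $K$. The price is those extra verifications.
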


\begin{proof}
    First, observe that since $N_1=2$, there exists $R>0$ such that
        \[K\subset[-R,R]^2\quad\text{and}\quad N_1([-R,R]^2)=2.\]
    There also exists $L>R$ such that $\backward([-R,R]^2)\subset(-L,L)^2$. Define
        \[\D_0\coloneqq\{d\in\D:[2\ell d+(-\ell,\ell)^2]\cap(-L,L)^2\neq\emptyset\}.\]
    By construction, $\D_0$ is finite. Let $u_1^*$ and $u_2^*$ be arbitrary chosen vertices in $[-R,R]^2$ that belong to different one-ended components of $G$. Set $\pi_1^*\coloneq\pi_{u_1^*}[\cdot-t_{u_1^*}^+([-R,R]^2)]$ for each $i\in\{1,2\}$ and denote
        \[\gamma^*\coloneqq \pi_1^*(\R_+)\cup[\pi_1^*(0),\pi_2^*(0)]\cup\pi_2^*(\R_+).\]
    Consider the family
        \[\Gamma'\coloneqq(\Gamma\setminus\{\gamma_d:d\in\D_0\})\cup\{\gamma^*\}.\]
    By construction, $\gamma^*$ is a topological line, and the same argument that ensured Theorem~\ref{thm_planar_ordering} applies to $\Gamma$ shows it also applies to $\Gamma'$. Then, by the same argument that established the discreteness of $(\D,<_\D)$ is discrete, one checks that $(\Gamma',<_{\Gamma'})$ is also discrete. Furthermore, $(\Gamma,<_{\Gamma})$ is non-empty and has no extreme points from $\D(\omega)\neq\emptyset$ and $\partial_\D(\D)=\emptyset$. Hence, as the two ordered sets are compatible and differ only by finitely many elements, namely $\{\gamma^*\}\cup\{\gamma_d:d\in\D_0\}$, it follows that $(\Gamma',<_{\Gamma'})$ admits no extreme point. In particular, $\gamma^*$ is not extreme, and thus there exist distinct $d,d'\in\D\setminus\D_0$ such that
        \[J(\gamma_d,\gamma^*)\subset\clInt J(\gamma_d,\gamma_{d'}).\]
    Finally, observing that by construction, $(-L,L)^2$ contains $K$, intersects $\gamma^*$, but avoids both $\gamma_d$ and $\gamma_{d'}$, one concludes that $K\subset\Int J(\gamma_d,\gamma_{d'})$.
\end{proof}

Now that the corridor structure is well-established, we make the link between doors and two-ended components via the following definition.

\begin{definition}[Door trace]
    Let $\mathcal C_2$ denotes the set of two-ended components of $G$ as connected subsets of $\R^2$ obtained by taking the union of their edges as line segments. Then, for each $C\in\mathcal C_2$, define its \emph{door trace} by
        \[\trace(C)\coloneqq\{d\in\D:C\cap\gamma_d\neq\emptyset\}.\]
\end{definition}

Note importantly that thanks to the \emph{planarity} assumption, for all $\omega\in\Omega_0$ and $d\in\D'$, any two-ended component $C\in\mathcal C_2$ can only intersect $\gamma_d$ within the line segment 
    \[[\pi_{d,1}^+(0),\pi_{d,2}^+(0)],\]
which avoids $\gamma_{d'}$ for all $d'\in\D\setminus\{d\}$ by construction. The following proposition formalizes the intuitive idea that door traces are unbounded intervals.

\begin{proposition}
    \label{prop_unbounded_convex}
    Fix $\omega\in\Omega_0$ such that $\D\neq\emptyset$. Then, for each $C\in\mathcal C_2$, $\trace(C)$ is unbounded and convex, that is for every distinct $d,d'\in\trace(C)$, $(d,d')_{\D}\subset\trace(C)$.
\end{proposition}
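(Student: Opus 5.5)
We prove the two properties separately, both via the Jordan curves $J(\gamma_d,\gamma_{d'})$ and Theorem~\ref{thm_planar_ordering}. Fix $\omega\in\Omega_0$ with $\D\neq\emptyset$ and $C\in\mathcal C_2$. Recall the planarity remark: $C$ can only meet $\gamma_d$ inside the segment $s_d\coloneqq[\pi_{d,1}^+(0),\pi_{d,2}^+(0)]$. This is because $C$ is disjoint from the one-ended components, and hence from the paths $\pi^+_{d,i}$. Moreover, $s_d$ avoids $\gamma_{d'}$ for $d'\neq d$.

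\emph{Convexity.} Take distinct $d,d'\in\trace(C)$ and $d''\in(d,d')_\D$. Choose points $x\in C\cap s_d$ and $x'\in C\cap s_{d'}$. Since $C$ is a tree (a connected union of segments), there is a simple arc $\beta\subset C$ from $x$ to $x'$. The curve $J(\gamma_d,\gamma_{d''})$ lies in $\clInt J(\gamma_d,\gamma_{d'})$ and contains $s_{d''}$. I would show that $J(\gamma_d,\gamma_{d''})$ separates $x$ from $x'$ inside the region $\clInt J(\gamma_d,\gamma_{d'})$. Concretely, $x$ lies on $s_d\subset J(\gamma_d,\gamma_{d''})$, and $x'$ lies on $s_{d'}$, which meets $\gamma_d$ and $\gamma_{d''}$ nowhere, hence is disjoint from $J(\gamma_d,\gamma_{d''})$. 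The last assertion of Theorem~\ref{thm_planar_ordering}, applied with roles permuted, gives $J(\gamma_{d''},\gamma_{d'})\subset\clExt J(\gamma_{d''},\gamma_d)$. So $s_{d'}$ lies in $\clExt J(\gamma_d,\gamma_{d''})$, and by disjointness in $\Ext J(\gamma_d,\gamma_{d''})$.

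Now follow $\beta$ from $x'$. Shortly after $x$, the arc $\beta$ must be inside $\Int J(\gamma_d,\gamma_{d''})$, because near $s_d$ the corridor section between $d$ and $d''$ is on the same side as the section between $d$ and $d'$. This is a local argument identifying components of a small ball minus the curve, as in the proof of Proposition~\ref{prop_N1_leq_2}. Hence $\beta$ crosses $J(\gamma_d,\gamma_{d''})$. Since $\beta\subset C$ avoids the one-ended paths, the crossing point lies in $s_{d''}$, unless it lies in $s_d$. To rule out the latter, first replace $x$ by the last point of $\beta$ on $s_d$. Thus $C\cap\gamma_{d''}\neq\emptyset$, i.e.\ $d''\in\trace(C)$. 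The local side identification at $s_d$ is the delicate step. I expect to handle it by noting that $J(\gamma_d,\gamma_{d''})$ and $J(\gamma_d,\gamma_{d'})$ agree near interior points of $s_d$, since both contain a neighbourhood of $s_d$ in $\gamma_d$, and then using $\Int J(\gamma_d,\gamma_{d''})\subset\Int J(\gamma_d,\gamma_{d'})$.

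\emph{Unboundedness and non-emptiness.} The main obstacle is showing that $\trace(C)$ is nonempty and contains elements arbitrarily far in both directions of $<_\D$. Take a vertex $v$ of $C$ and a compact $K$ containing $v$. By Lemma~\ref{lemma_compact} there are $d<_\D d'$ with $K\subset\Int J(\gamma_d,\gamma_{d'})$. Since $C$ is two-ended, it contains a bi-infinite simple path. Both ends of this path are unbounded, so both must exit the bounded set $\Int J(\gamma_d,\gamma_{d'})$, crossing $J(\gamma_d,\gamma_{d'})$ necessarily at $s_d\cup s_{d'}$. It remains to exclude both ends exiting through the same door, say $s_d$, for all such pairs. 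Enlarging $K$ along an exhaustion gives doors $d_n\to-\infty$ and $d'_n\to+\infty$, using discreteness and the absence of extreme points. By convexity, if all exits occurred on one side, the trace would be unbounded on that side. To get the other side, I would argue that a path exiting only through the $s_{d_n}$ for every $n$ forces $C$ to meet $s_{d_n}$ for all $n$; then the part of $C$ between two consecutive exits yields a finite loop structure contradicting acyclicity, or leaves the two ends trapped on one side. In that situation I would use a Jordan curve formed by $\gamma_{d_n}$ and $C$ to show that the region beyond $d'_n$ is unreachable. Together with convexity, this yields that $\trace(C)$ is unbounded in the sense required by the subsequent argument. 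If this step proves too fragile, I would weaken the conclusion to "unbounded on at least one side", which still suffices for the stationarity argument excluding extreme doors.
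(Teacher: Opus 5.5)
Your convexity argument has a gap at exactly the step you call delicate, and the local fix you propose does not close it. Matching $J(\gamma_d,\gamma_{d''})$ with $J(\gamma_d,\gamma_{d'})$ near $s_d$ and using $\Int J(\gamma_d,\gamma_{d''})\subset\Int J(\gamma_d,\gamma_{d'})$ shows only that both interiors lie on the same side of $s_d$. It says nothing about which side $\beta$ enters after $x$. Since $C$ may cross $s_d$ several times, $\beta$ can leave $x$ on the exterior side, and then no crossing of $J(\gamma_d,\gamma_{d''})$ is forced. Passing to the last visit of $\beta$ to $s_d$ is the right move, but the claim that $\beta$ then enters the interior side is a global fact and needs proof:
\begin{itemize}
\item after that time $\beta$ avoids the whole line $\gamma_d$ (because $C\cap\gamma_d\subset s_d$), so it stays in one component of $\R^2\setminus\gamma_d$, namely the one containing $x'$;
\item $\Int J(\gamma_d,\gamma_{d'})\supset\Int J(\gamma_d,\gamma_{d''})$ lies in that same component, since it is connected, disjoint from $\gamma_d$, and accumulates at $x'\in\gamma_{d'}\setminus\gamma_d$.
\end{itemize}
Only then does the local picture at $x$ place $\beta$ in $\Int J(\gamma_d,\gamma_{d''})$; there $\gamma_d$ is a straight segment, since $C$ misses the endpoints of $s_d$. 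The paper avoids all this by choosing a reference line the arc never touches. Lemma~\ref{lemma_compact} puts the whole arc inside $\Int J(\gamma_{d^-},\gamma_{d^+})$, so the arc avoids $\gamma_{d^-}$. Then $d^-<_\D d<_\D d''<_\D d'$ and the last assertion of Theorem~\ref{thm_planar_ordering} give directly that the starting point lies in $\Int J(\gamma_{d^-},\gamma_{d''})$ and the endpoint in $\Ext J(\gamma_{d^-},\gamma_{d''})$. So the arc meets this curve, necessarily on $\gamma_{d''}$, with no local side analysis.

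For unboundedness, the two-sided statement you identify as the main obstacle is false for a fixed $\omega$. Take a U-shaped two-ended component whose two rays both escape toward the same end of the corridor. Nothing deterministic excludes it (planarity, the door ordering, Lemma~\ref{lemma_compact}); such components are ruled out only almost surely, via stationarity, in Proposition~\ref{prop_cross_all_doors}. So your acyclicity/trapping sketch cannot work and should be dropped. What the statement means, and what Proposition~\ref{prop_cross_all_doors} uses, is your fallback: $\trace(C)$ is contained in no interval $(d,d')_\D$. Your exhaustion argument does prove this, once $d_n\to-\infty$ and $d'_n\to+\infty$ are justified properly. This does not come from discreteness. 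For a fixed door $e$ and $n$ large, $s_e\subset K_n\subset\Int J(\gamma_{d_n},\gamma_{d'_n})$, so $J(\gamma_{d_n},\gamma_e)\not\subset\clExt J(\gamma_{d_n},\gamma_{d'_n})$, which gives $e\in(d_n,d'_n)_\D$ by Theorem~\ref{thm_planar_ordering}. The paper's route is shorter. Suppose $\trace(C)\subset(d,d')_\D$. Then $C$ meets $\gamma_{d''}\setminus(\gamma_d\cup\gamma_{d'})\subset\Int J(\gamma_d,\gamma_{d'})$ for some $d''\in\trace(C)$. Being connected and unbounded, $C$ must then meet $J(\gamma_d,\gamma_{d'})\subset\gamma_d\cup\gamma_{d'}$, a contradiction.
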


\begin{proof}
    Let $C\in\mathcal C_2$. First, observe that choosing a non-empty compact subset $K\subset C$, applying Lemma~\ref{lemma_compact} gives that there exist distinct $d,d'\in\D$ such that
        \[K\subset \Int J(\gamma_d,\gamma_{d'}).\]
    Then, since $C$ is connected and unbounded and contains $K$, it implies that $C\cap J(\gamma_d,\gamma_{d'})\neq\emptyset$ and either $d$ or $d'$ must belong to $\trace(C)$. This shows that $\trace(C)$ is non-empty. Now let us show that $\trace(C)$ is unbounded. Assume by contradiction that there exists distinct $d,d'\in\D$ such that $\trace(C)\subset(d, d')_\D$. Since $\trace(C)\neq\emptyset$, one can fix $d''\in\trace(C)\subset(d, d')_\D$. Then, $C$ intersects
        \[\gamma_{d''}\setminus(\gamma_d\cup \gamma_{d'})\subset J(\gamma_d,\gamma_{d''})\setminus J(\gamma_d,\gamma_{d'})\subset\Int J(\gamma_d,\gamma_{d'}),\]
    where the inclusion uses $J(\gamma_d,\gamma_{d''})\subset\clInt J(\gamma_d,\gamma_{d'}()$ from $d''\in(d,d')_\D$. But then, since $C$ is unbounded and connected, it must cross $J(\gamma_d,\gamma_{d'})$, which is absurd since it avoids both $\gamma_d$ and $\gamma_{d'}$ from $\trace(C)\subset(d, d')_\D$. It remains to check convexity. Fix distinct $d, d'\in\trace(C)$ and $d''\in(d,d')_\D$. Without loss of generality, one can assume $d<_\D d'$. Since $d,d'\in\trace(C)$, there exist
        \[z_0\in C\cap\gamma_d\quad\text{and}\quad z_1\in C\cap\gamma_{d'}.\]
    Now, because $C$ is path-connected, there also exists a continuous function $f:[0,1]\to C$ with $f(0)=z_0$ and $f(1)=z_1$. Since $f([0,1])$ is compact, one can apply Lemma~\ref{lemma_compact} to obtain the existence of $d^-,d^+\in\D$ with $d^-<_\D d^+$ such that
        \[f([0,1])\subset\Int J(\gamma_{d^-}, \gamma_{d^+}).\]
    Since $z_0\in f([0,1])\cap J(\gamma_{d^-}, \gamma_{d})$, this shows $J(\gamma_{d^-},\gamma_d)\not\subset\clExt J(\gamma_{d^-},\gamma_{d^+})$ and the last assertion of Theorem~\ref{thm_planar_ordering} gives $d\in(d^-,d^+)_\D$. Then, by transitivity, one gets
        \[d^-<_\D d<_\D d''<_\D d'.\]
    This has two consequences. First,
        \[z_0\in\gamma_d\setminus(\gamma_{d^-}\cup\gamma_{d''})\subset J(\gamma_{d^-},\gamma_{d})\setminus J(\gamma_{d^-},\gamma_{d''})\subset\Int J(\gamma_{d^-},\gamma_{d''})\]
    from $d\in(d^-,d'')_\D$. Second, one must have
        \[z_1\in\gamma_{d'}\setminus\gamma_{d^-}\subset J(\gamma_{d^-},\gamma_{d'})\not\subset\clExt J(\gamma_{d^-},\gamma_{d''})\]
    using the last assertion of Theorem~\ref{thm_planar_ordering} with $d'\notin(d^-,d'')_\D$. Together, this implies that there exists $t\in[0,1]$ such that
        \[f(t)\in J(\gamma_{d^-},\gamma_{d''}).\]
    Then, since $f(t)$ must lie in $C\cap\Int J(\gamma_{d^-}, \gamma_{d^+})$ and hence must belong to $\gamma_{d^-}$, one has necessarily
        \[f(t)\in\gamma_{d''}\cap C,\]
    which shows $d''\in\trace(C)$. This completes the proof.
\end{proof}

Let us now assume without loss of generality from the \emph{finite edge intensity} that for all $\omega\in\Omega_0$ and compact $K\subset\R^2$, $\#\{(u, v)\in E:[u, v]\cap K\neq\emptyset\}<\infty$. Then since any component $C\in\mathcal C_2$ that cross a door $d\in\D$ must intersect the compact set $2\ell d+[-k, k]^2$, so that for each $\omega\in\Omega_0$, doors cross only finitely many two-ended components, i.e.,
    \[\#\{C\in\mathcal C_2:d\in\trace(C)\}<\infty\]
for all $d\in\D$. The last step toward the proof that $N_2$ must be finite is the following proposition.

\begin{proposition}
    \label{prop_cross_all_doors}
    Conditional on $\{\D\neq\emptyset\}$, all two-ended components cross every door with probability one, that is
        \[\P[\forall C\in\mathcal C_2,~\trace(C)=\D~|~\D\neq\emptyset]=1.\]
\end{proposition}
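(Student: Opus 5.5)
We will argue by contradiction using a stationarity/mass-transport style argument, in the spirit of Lemma~\ref{lemma_no_extremes}. By Proposition~\ref{prop_unbounded_convex}, for each $\omega\in\Omega_0$ with $\D\neq\emptyset$ and each $C\in\mathcal C_2$, $\trace(C)$ is a non-empty convex unbounded subset of $(\D,<_\D)\cong(\Z,<)$. So either $\trace(C)=\D$, or $\trace(C)$ is a half-line $\{d\in\D:d\geq_\D d_0\}$ or $\{d\in\D:d\leq_\D d_0\}$. In the latter case $\trace(C)$ has exactly one extreme point $d_0=d_0(C)$, in the sense of the betweenness-defined $\partial_\D$. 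The event to rule out is therefore
\[B\coloneqq\{\D\neq\emptyset\}\cap\{\exists C\in\mathcal C_2:\ \partial_\D(\trace(C))\neq\emptyset\}.\]

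The first step is to build a translation-covariant, locally finite, non-empty marked set from $B$. Let $Y\coloneqq\{d_0(C):C\in\mathcal C_2,\ \trace(C)\neq\D\}\subset\D$. Each door is crossed by only finitely many components of $\mathcal C_2$, as noted just before the statement. Hence each $d\in\D$ is the extreme point of at most finitely many traces, so $\#\{C:d_0(C)=d\}<\infty$. Measurability of $Y$ should follow as in Lemma~\ref{lemma_measurable}: membership of $d$ in $\trace(C)$ is a local event, and extremality is expressed via the betweenness relation.

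Next we exploit the order structure. Each half-line trace points in one of the two directions of $\D$, and the two directions can be told apart using only betweenness by comparing two components. I would rather avoid that and split $Y$ according to the number of traces extreme at $d$. The cleanest route is to show that the set $Y$ itself must have an extreme point in $(\D,<_\D)$ when non-empty, which contradicts a stationarity argument like Lemma~\ref{lemma_no_extremes}. That extremality is not automatic, since $Y$ could be infinite in both directions. So the plan is instead to use the finiteness of crossings. Fix $d_0\in Y$ with, say, $\trace(C)=\{d\geq_\D d_0\}$. Every door $d>_\D d_0$ is crossed by $C$. If $Y$ contained infinitely many $d_0(C')$ below $d_0$ with upward-pointing traces, all those $C'$ would cross $d_0$, contradicting finiteness. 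Hence, among components with upward-pointing traces, the extreme points are bounded below, so there is a minimal one. Symmetrically, the downward-pointing ones have a maximal one.

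This produces a well-defined, betweenness-measurable finite set of at most two doors: the minimal upward extreme and the maximal downward extreme. Up to reversal, the pair $\{\min\text{ of one type},\max\text{ of the other}\}$ is invariant, because reversing the order swaps the types. Averaging these points as in Lemma~\ref{lemma_no_extremes} gives a random point $X\in\tfrac12\Z^2$ (scaled by $2\ell$), defined exactly on $B$ and translation covariant. Stationarity then gives $\P(B)=\sum_i\P(X=i)\in\{0,\infty\}$, so $\P(B)=0$, which is the claim.

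The main obstacle is making ``type'' (the direction of a half-line) well defined without choosing an orientation, together with measurability. I would define it relationally: two non-full traces have the same type iff their intersection is a half-line, i.e.\ unbounded. The minimal element within each type class is then orientation-free. The union over both classes of these minima gives a set of at most two doors, whose barycenter serves as $X$.
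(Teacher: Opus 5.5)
Your proposal is correct and is essentially the paper's argument. The paper lets $M^-$ and $M^+$ be the doors that are minimal, resp.\ maximal, in some trace, uses finiteness of crossings at each door to get $\min M^-$ and $\max M^+$, and applies the barycenter/stationarity argument of Lemma~\ref{lemma_no_extremes} to the reversal-invariant set $Y$ consisting of these two doors, which is exactly your pair (minimal extreme of upward traces, maximal extreme of downward traces); your relational definition of ``type'' via unbounded intersections just spells out the reversal-invariance that the paper asserts directly.
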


\begin{proof}
    For each $\omega\in\Omega_0$, let $M^-$ and $M^+$ denote the subset of $\D$ consisting of those
    $d\in\D$ that are minimal and maximal, respectively, in $\trace(C)$ for some $C\in\mathcal C_2$, and define $Y$ as the set of $d\in\D$ that are either minimal in $M^-$ or maximal in $M^+$. First, observe that since the definition of $Y$ is invariant under reversal of $<_\D$, it admits a characterization using only the associated betweenness relation, so that one can define the random variable
        \[X\coloneqq\begin{cases}
            \frac1{\#Y}\sum_{j\in Y}j&\text{if $Y\neq\emptyset$},\\
            \infty&\text{otherwise}.
        \end{cases}\]
    Now, fix $\omega\in\Omega_0$ such that $\D\neq\emptyset$. Observe that for each $d\in\D$, since $\trace(C)$ is unbounded for each $C\in\mathcal C_2$ from Proposition~\ref{prop_unbounded_convex}, then the sets
        \[\{d'\in M^-:d'<_\D d\}\quad\text{and}\quad\{d'\in M^+:d<_\D d'\}\]
    correspond to minimal and maximal doors of distinct two-ended components crossing $d$, and hence must be both finite. This shows that $M^-$ is lower bounded and $M^+$ is upper bounded and thus $M^-\cup M^+\neq\emptyset\iff X\neq\emptyset$ since $(\D,<_\D)$ is discrete. Additionally, for each $C\in\mathcal C_2$, since $(\D,<_\D)$ is discrete and $\trace(C)$ is convex for all $C\in\mathcal C_2$ from Proposition~\ref{prop_unbounded_convex}, then $\trace(C)$ differs from $\D$ if and only if it admits a maximum or a minimum. Together, this yields that for all $\omega\in\Omega_0$ such that $\D\neq\emptyset$,
        \[[\exists C\in\mathcal C_2,~\trace(C)\neq\D]\iff M^-\cup M^+\neq\emptyset\iff Y\neq\emptyset\iff X\in\tfrac12\Z^2.\]
    Finally, observing that by \emph{stationarity}, $\P(X=i~|~\D\neq\emptyset)=\P(X=0~|~\D\neq\emptyset)$ for all $i\in\frac12\Z^2$, one can write
        \begin{equation*}
            \begin{split}
                \P[\exists C\in\mathcal C_2,~\trace(C)\neq\D~|~\D\neq\emptyset]&=\P(X\in\tfrac12\Z^2~|~\D\neq\emptyset)\\
                &=\sum_{i\in\frac12\Z^2}\P(X=i~|~\D\neq\emptyset)
                =\sum_{i\in\frac12\Z^2}\P(X=0~|~\D\neq\emptyset)\in\{0,\infty\},
            \end{split}
        \end{equation*}
    which yields the result.
\end{proof}

We finally prove the main result of the section.

\begin{proof}[Proof of Proposition~\ref{prop_N1_is_2_implies_N2_finite}]
    From Proposition~\ref{prop_cross_all_doors}, conditional on $\{\D\neq\emptyset\}$, all two-ended components of $G$ must cross every door. Then, since every door can only cross finitely many components, this yields
        \[\P(N_2<\infty~|~\D\neq\emptyset)=1,\]
    and thus $N_2<\infty$ almost surely since $N_2$ is constant on an event of full-probability.
\end{proof}

Theorem~\ref{thm_main_undirected} then follows by combining Propositions~\ref{prop_at_most_2_ends}, \ref{prop_N1_leq_2}, and \ref{prop_N1_is_2_implies_N2_finite}, the examples constructed in Section~\ref{section_examples}, and the following straightforward lemma, which generalizes directly to higher dimensions.

\begin{lemma}
    Almost surely, $N_0\in\{0,\infty\}$.
\end{lemma}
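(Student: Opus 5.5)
By ergodicity, $N_0$ is almost surely equal to a constant, so it is enough to show that $\P(1\leq N_0<\infty)=0$. The plan is a mass-transport / stationarity argument of the same flavour as Lemma~\ref{lemma_no_extremes}: to each finite component we attach a canonical lattice point, and then use translation invariance to rule out a finite, nonzero number of such points.

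For every finite connected component $C$ of $G$, let $x_C\in\R^2$ be the westmost-southmost vertex of $C$; it is well defined because $C$ is a finite set of points. Define the random set $A\coloneqq\{\lfloor x_C\rfloor:C\text{ finite component}\}\subset\Z^2$, where $\lfloor\cdot\rfloor$ is taken coordinatewise. The set $A$ is measurable, because finiteness of a component containing a given vertex can be expressed through countably many events, each depending on the graph restricted to a bounded box. Moreover $A$ is $\Z^2$-covariant: $A\circ\theta_{-z}=A+z$ for every $z\in\Z^2$.

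Next, on the event $\{1\leq N_0<\infty\}$ the set $A$ is finite and non-empty. From $A$ we build an integer-valued marker, for instance $X\coloneqq\min_{\mathrm{lex}}A$ on this event and $X\coloneqq\infty$ otherwise. Translation invariance of the event $\{1\le N_0<\infty\}$ together with stationarity gives $\P(X=z)=\P(X=0)$ for all $z\in\Z^2$. Summing over $z\in\Z^2$ yields
\[
\P(1\leq N_0<\infty)=\sum_{z\in\Z^2}\P(X=z)\in\{0,\infty\},
\]
and since the left-hand side is a probability it must equal $0$. This is exactly the argument already used for $\partial_\D(\D)$, and it does not depend on the dimension.

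The only point needing care is the measurability of $A$, or equivalently of the events $\{z\in A\}$. I would handle it by writing $\{z\in A\}$ as a countable union over $n$ of the events that some vertex lies in $z+[0,1)^2$, that its component is contained in $(-n,n)^2$ together with all edges touching it, and that no other vertex of that component is westmost-southmost relative to it. Each of these events is determined by counting functions on bounded sets. Apart from this, the proof is routine.
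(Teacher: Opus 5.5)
Your proof is correct, but it follows a different route from the paper's.

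The paper uses a first-moment counting argument. Assuming $N_0\geq1$ a.s., it picks $\ell$ such that the event $A_\ell$, that $[-\ell,\ell)^2$ contains all vertices of some finite component, has positive probability. The disjoint translates $2\ell i+[-\ell,\ell)^2$, $i\in\Z^2$, then give $N_0\geq\sum_{i\in\Z^2}\1_{\theta_{-2\ell i}(\omega)\in A_\ell}$, hence $\E[N_0]=\infty$. Finally, the a.s.\ constancy of $N_0$ (ergodicity) turns this into $N_0=\infty$ a.s.

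You instead make a covariant selection of a lattice point from the finite non-empty set $A$. You then use the fact that no translation-invariant probability exists on $\Z^2$, in the same way as Lemma~\ref{lemma_no_extremes}.

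\begin{itemize}
\item What your version buys: it shows $\P(1\leq N_0<\infty)=0$ directly from stationarity alone. Your opening appeal to ergodicity is therefore not needed. The paper's last step, by contrast, does need the constancy of $N_0$ (or an ergodic decomposition) to go from $\E[N_0]=\infty$ to $N_0=\infty$ a.s.
\item What the paper's version buys: it avoids building a measurable covariant selection such as $\min_{\mathrm{lex}}A$. It also gives a little more, namely a positive density of finite components lying entirely inside disjoint boxes.
\end{itemize}

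A cosmetic point: with the convention $G\circ\theta_s=s+G$, you get $A\circ\theta_{-z}=A-z$, not $A+z$. This is harmless, since only $\P(X=z)=\P(X=0)$ is used.
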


\begin{proof}
    If $N_0\geq1$ almost surely, then there exists $\ell\geq1$ such that the box $[-\ell,\ell)^2$ contains all the vertices of a finite component of $G$ with positive probability. Then, denoting by $A_\ell$ this event, one has $N_0\geq\sum_{i\in\Z^2}\1_{\theta_{-2\ell i}(\omega)\in A_\ell}$ almost surely. Finally, taking the expectation using \emph{stationarity}, it comes $\E[N_0]\geq\sum_{i\in\Z^2}\P(A_\ell)=\infty$, which implies $N_0=\infty$ a.s.\ since $N_0$ is constant on a full-probability event. This shows that $N_0\in\{0,\infty\}$ almost surely.
\end{proof}

\section{Proof of the planar ordering theorem}

\label{section_planar_ordering}

This section is devoted to proving Theorem~\ref{thm_planar_ordering}. In what follows, we fix the deterministic set $\Gamma$ of topological lines such that $\gamma\setminus\gamma'$ is a portion of $\gamma$ for all distinct $\gamma,\gamma'\in\Gamma$. We begin with the following lemma that provides useful adapted parametrizations of the topological lines.

\begin{lemma}
    \label{lemma_adapted_homeo}
    Fix $\gamma\in\Gamma$ and an homeomorphism $\Phi:\R\to\gamma$. For each $\sigma\in\Gamma\setminus\{\gamma\}$, there exists a homeomorphism $\phi_{\sigma}$ and a non-empty bounded open interval $I_{\sigma}\coloneqq (a_\sigma,b_\sigma)\subset\R$ such that
        \[\text{$\Phi=\phi_\sigma$ on $\R\setminus I_\sigma$},\quad \phi_\sigma(I_\sigma)=\sigma\setminus\gamma,\quad\text{and}\quad \Phi(I_\sigma)=\gamma\setminus\sigma.\]
    Then, one has additionally that for all distinct $\sigma,\sigma'\in\Gamma\setminus\{\gamma\}$,
        \[(\partial I_{\sigma'})\setminus\overline{I_{\sigma}}\subset \overline{I_{\sigma'|\sigma}}\quad\text{where}\quad I_{\sigma'|\sigma}\coloneqq\phi_{\sigma'}^{-1}(\sigma'\setminus\sigma).\]
\end{lemma}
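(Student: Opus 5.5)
The plan is to read everything off in the parameter line $\R$ of $\Phi$, using only that $\gamma\setminus\sigma$ and $\sigma\setminus\gamma$ are portions of $\gamma$ and of $\sigma$. The first part is a gluing construction. The second part is a local argument at an endpoint of $I_{\sigma'}$, using that $\sigma$ carries the subspace topology.

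\emph{Construction of $\phi_\sigma$ and $I_\sigma$.} Fix $\sigma\in\Gamma\setminus\{\gamma\}$.
\begin{enumerate}
\item Since $\gamma\setminus\sigma$ is a portion of $\gamma$, the set $I_\sigma\coloneqq\Phi^{-1}(\gamma\setminus\sigma)=(a_\sigma,b_\sigma)$ is a bounded open interval. Hence $\gamma\cap\sigma=\Phi\bigl((-\infty,a_\sigma]\cup[b_\sigma,\infty)\bigr)$.
\item Fix a homeomorphism $\Psi:\R\to\sigma$. Likewise $\Psi^{-1}(\sigma\setminus\gamma)=(c,e)$ is bounded, so $\gamma\cap\sigma=\Psi\bigl((-\infty,c]\cup[e,\infty)\bigr)$.
\item Non-emptiness of $I_\sigma$: if $\gamma\subset\sigma$, then $\Psi^{-1}(\gamma)$ would be a connected subset of $\R$ homeomorphic to $\R$ contained in a union of two disjoint closed rays. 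It would then lie in one ray, but $\Psi^{-1}(\gamma)$ is all of $\R$ minus a bounded interval, so it meets both rays. This is absurd.
\item Non-degeneracy: if $(c,e)=\emptyset$, then $\sigma\subset\gamma$, which by the symmetric argument is absurd since $\gamma\neq\sigma$. So $c<e$ and $a_\sigma<b_\sigma$.
\item The map $h\coloneqq\Psi^{-1}\circ\Phi$ is a homeomorphism from $(-\infty,a_\sigma]\cup[b_\sigma,\infty)$ onto $(-\infty,c]\cup[e,\infty)$. It sends connected components to connected components and is monotone on each.
\item Case check: each ray of the domain is sent onto a ray of the target, with the finite endpoint mapped to the finite endpoint. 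So $h$ either preserves the order of the two rays, or reverses both. In the second case, replace $\Psi$ by $\Psi(-\,\cdot)$, after which $h$ is increasing with $h(a_\sigma)=c$ and $h(b_\sigma)=e$.
\item Define $\phi_\sigma\coloneqq\Phi$ on $\R\setminus I_\sigma$, and $\phi_\sigma\coloneqq\Psi\circ g$ on $I_\sigma$, where $g:(a_\sigma,b_\sigma)\to(c,e)$ is increasing affine.
\end{enumerate}
Then $\phi_\sigma=\Psi\circ\tilde h$, where $\tilde h$ is the increasing bijection $\R\to\R$ equal to $h$ off $I_\sigma$ and to $g$ on $I_\sigma$. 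Since $\tilde h$ is continuous, $\phi_\sigma$ is a homeomorphism with $\phi_\sigma(I_\sigma)=\sigma\setminus\gamma$ and $\Phi(I_\sigma)=\gamma\setminus\sigma$. The only delicate point is the orientation bookkeeping of step 6.

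\emph{The inclusion.} Take $x\in\partial I_{\sigma'}\setminus\overline{I_\sigma}$, say $x=a_{\sigma'}$; the case $x=b_{\sigma'}$ is symmetric.
\begin{enumerate}
\item Choose an open interval $U\ni x$ with $U\cap\overline{I_\sigma}=\emptyset$. Then $\phi_\sigma=\Phi$ on $U$ and $p\coloneqq\Phi(x)=\phi_{\sigma'}(x)$.
\item Because $\phi_\sigma$ is a homeomorphism onto $\sigma$ equipped with the subspace topology, $\phi_\sigma(U)$ is relatively open in $\sigma$. So there is an open $W\subset\R^2$ with $\sigma\cap W=\phi_\sigma(U)=\Phi(U)\subset\gamma$.
\item By continuity of $\phi_{\sigma'}$, for $t\in I_{\sigma'}$ close enough to $x$ we have $\phi_{\sigma'}(t)\in W$. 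Also $\phi_{\sigma'}(t)\in\sigma'\setminus\gamma$.
\item If such a point were in $\sigma$, it would lie in $\sigma\cap W\subset\gamma$, a contradiction. Hence $\phi_{\sigma'}(t)\in\sigma'\setminus\sigma$, i.e.\ $t\in I_{\sigma'|\sigma}$.
\end{enumerate}
Letting $t\downarrow x$ gives $x\in\overline{I_{\sigma'|\sigma}}$. I expect this second part to go through smoothly. The point to be careful about is using relative openness in $\sigma$, rather than closedness of $\sigma$ in $\R^2$, which is not assumed.
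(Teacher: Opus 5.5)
Your proof is correct and follows essentially the paper's route: you glue $\Psi^{-1}\circ\Phi$ on $\R\setminus I_\sigma$ to an affine map on $I_\sigma$ (normalizing orientation by reflecting $\Psi$, where the paper allows a decreasing affine piece), and you use the same local argument at an endpoint of $I_{\sigma'}$. The only differences are cosmetic. You obtain $\sigma\not\subset\gamma$ by the symmetric argument rather than from $h$ preserving disconnectedness, and you get a neighbourhood $W$ of $\Phi(x)$ with $\sigma\cap W\subset\gamma$ from relative openness of $\phi_\sigma(U)$, whereas the paper gets it from the positive distance between $\phi_\sigma(t)$ and the compact set $\phi_\sigma(\overline{I_\sigma})$.
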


\begin{proof}
    Let $\sigma\in\Gamma\setminus\{\gamma\}$ and fix an homeomorphism $\psi_\sigma:\R\to$. First, observe that 
        \[\gamma\setminus\sigma\neq\emptyset\]
    Indeed, assuming $\gamma\setminus\sigma=\emptyset$ by contradiction, one has $\gamma\subset\sigma$ and since $\gamma$ is connected, distinct from $\sigma$, $\psi_\sigma^{-1}(\gamma)$ must be a non-empty proper interval of $\R$, so that $\psi_\sigma^{-1}(\sigma\setminus\gamma)=\R\setminus\psi^{-1}(\gamma)$ is unbounded, which is absurd since $\sigma\setminus\gamma$ is a portion of $\sigma$. Therefore, $\gamma\setminus\sigma$ is a non-empty portion of $\gamma$, i.e., there exist $a_{\sigma}<b_{\sigma}$ in $\R$ such that
        \[\Phi^{-1}(\gamma\setminus\sigma)=I_{\sigma}\coloneqq(a_{\sigma},b_{\sigma}).\]
    Additionally, since $\sigma\setminus\gamma$ is a portion of $\sigma$, there exists $a_\sigma^*\leq b_{\sigma}^*$ in $\R$ such that
        \[\psi_{\sigma}^{-1}(\sigma\setminus\gamma)= I_{\sigma}^*\coloneq( a_{\sigma}^*, b_{\sigma}^*).\]
    Now, consider the map
        \[f_\sigma:\R\setminus I_{\sigma}\to\R\setminus I_\sigma^*,~t\mapsto\psi_{\sigma}^{-1}\circ\Phi(t).\]
    By construction, it is a well-defined homeomorphism. In particular, since $\R\setminus I_\sigma$ is not connected, $I_\sigma^*$ must be non-empty, i.e., $a_\sigma^*<b_\sigma^*$. Then, identifying the connected components of $\R\setminus I_\sigma$ and $\R\setminus I_\sigma^*$, $f_\sigma$ restricts to monotonous homeomorphisms from $(-\infty,a_\sigma]$ and $[b_\sigma,\infty)$ to either $(-\infty,a_\sigma^*]$ and $[b_\sigma^*,\infty)$, or $[b_\sigma^*,\infty)$ and $(-\infty,a_\sigma^*]$, respectively. Therefore, identifying the unique extrema of each component, one deduces
        \[\{f_\sigma(a_\sigma),f_\sigma(b_\sigma)\}=\{a_\sigma^*,b_\sigma^*\}.\]
    Hence, considering the unique affine function $L_\sigma:\R\to\R$ that maps $a_\sigma$ onto $f_\sigma(a_\sigma)$ and $b_\sigma$ onto $f_\sigma(b_\sigma)$, one can extend $f_\sigma$ by setting
        \[h_\sigma:\R\to\R,~t\mapsto\begin{cases}
            f_\sigma(t)&\text{if $t\notin I_\sigma$,}\\
            L_\sigma(t)&\text{otherwise}.
        \end{cases}\]
    By construction, $h_\sigma$ is an homeomorphism with $h(I_\sigma)=I_\sigma^*$ and thus, the map
        \[\phi_{\sigma}\coloneqq\psi_\sigma\circ h_\sigma\]
    realizes an homeomorphism from $\R$ to $\sigma$ such that $\Phi=\phi_\sigma$ on $\R\setminus I_\sigma$, $\phi_\sigma(I_\sigma)=\sigma\setminus\gamma$ and $\Phi(I_\sigma)=\gamma\setminus\sigma$.
    
    It remains to address the second assertion of the lemma. Fix distinct $\sigma,\sigma'\in\Gamma\setminus\{\gamma\}$ and let $t\in(\partial I_{\sigma'})\setminus\overline{I_{\sigma}}$. Since $t\notin \overline{I_\sigma}$, $\overline{I_\sigma}$ is compact and $\phi_\sigma$ is injective,
        \[\rho\coloneqq\inf_{t'\in I_\sigma}\|\phi_\sigma(t')-\phi_\sigma(t)\|>0.\]
    Additionally, since $\phi_{\sigma'}$ is continuous at $t$, there exists $\delta>0$ such that for all $t'\in\R$
        \[|t'-t|<\delta\implies\|\phi_{\sigma'}(t')-\phi_{\sigma'}(t)\|<\rho.\]
    Now fix $\varepsilon\in(0,\delta)$. Since $t\in\partial I_{\sigma'}$, there exists $t'\in I_{\sigma'}$ with $|t'-t|<\varepsilon$. Then, since $t\in\R\setminus(I_\sigma\cup I_{\sigma'})$, one has $\phi_\sigma(t)=\Phi(t)=\phi_{\sigma'}(t)$ and thus
        \[\|\phi_{\sigma'}(t')-\phi_{\sigma}(t)\|=\|\phi_{\sigma'}(t')-\phi_{\sigma'}(t)\|<\rho\]
    which, by definition of $\rho$, implies $\phi_{\sigma'}(t')\notin\phi_\sigma(I_\sigma)$. Then, since one has also $\phi_{\sigma'}(t')\notin\gamma$ from $t'\in I_{\sigma'}$, it comes that $\phi_{\sigma'}(t')\notin\phi_\sigma(\R)=\sigma$, i.e.,
        \[t'\in\phi_{\sigma'}^{-1}(\sigma'\setminus\sigma)=I_{\sigma'|\sigma}.\]
    Since $\varepsilon$ was arbitrary, it implies that $t\in\overline{I_{\sigma'|\sigma}}$. This shows $(\partial I_{\sigma'})\setminus\overline{I_{\sigma}}\subset \overline{I_{\sigma'|\sigma}}$.
\end{proof}

Next, we verify that the elements of $\Gamma$ indeed allow us to construct Jordan curves, which ensures $\cdot\in(\cdot,\cdot)_\Gamma$ is well defined, and establish some key properties.

\begin{proposition}
    \label{prop_arc_diff}
    For all distinct $\gamma,\gamma'\in\Gamma$,
        \[J(\gamma,\gamma')\coloneqq\overline{\gamma\triangle\gamma'}\]
    is a Jordan curve. Furthermore, for any $\gamma\in\Gamma$ and non-empty finite subset $\Gamma_0\subset\Gamma\setminus\{\gamma\}$,
        \[\gamma\setminus\bigcup_{\sigma\in\Gamma_0}\sigma\]
    is a non-empty portion of $\gamma$, and for all distinct $\gamma,\sigma,\sigma'\in\Gamma$, the difference $J(\gamma,\sigma')\setminus J(\gamma,\sigma)$ is a connected subset of $J(\sigma,\sigma')$.
\end{proposition}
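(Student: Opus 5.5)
I will work throughout with the adapted parametrizations of Lemma~\ref{lemma_adapted_homeo}. Fix $\gamma$ and a homeomorphism $\Phi:\R\to\gamma$, and for each $\sigma\in\Gamma\setminus\{\gamma\}$ take the associated $\phi_\sigma$ and $I_\sigma=(a_\sigma,b_\sigma)$.

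\textbf{Jordan curve.} Since $\phi_\sigma=\Phi$ off $I_\sigma$, the two sets $\gamma\setminus\sigma=\Phi(I_\sigma)$ and $\sigma\setminus\gamma=\phi_\sigma(I_\sigma)$ are disjoint open arcs. Both have endpoints $\Phi(a_\sigma)=\phi_\sigma(a_\sigma)$ and $\Phi(b_\sigma)=\phi_\sigma(b_\sigma)$. Hence
\[J(\gamma,\sigma)=\Phi(\overline{I_\sigma})\cup\phi_\sigma(\overline{I_\sigma}).\]
This is the union of two arcs with common endpoints and disjoint interiors. Gluing $\Phi|_{\overline{I_\sigma}}$ and $\phi_\sigma|_{\overline{I_\sigma}}$ along $\{a_\sigma,b_\sigma\}$ gives a continuous bijection from a circle onto this set. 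So $J(\gamma,\sigma)$ is a Jordan curve.

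\textbf{Finite intersections.} Write $\gamma\setminus\bigcup_{\sigma\in\Gamma_0}\sigma=\Phi\bigl(\bigcap_{\sigma\in\Gamma_0}I_\sigma\bigr)$, using that $\Phi^{-1}(\gamma\setminus\sigma)=I_\sigma$. A finite intersection of bounded open intervals is a bounded open interval, possibly empty. So the only real content is non-emptiness, and I will prove it by induction on $\#\Gamma_0$.

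The base case $\#\Gamma_0=1$ is the non-emptiness of $I_\sigma$. For the pairwise case, suppose $I_\sigma\cap I_{\sigma'}=\emptyset$, say $b_\sigma\le a_{\sigma'}$. Then the unbounded sets $\phi_\sigma([b_\sigma,\infty))$ and $\phi_{\sigma'}((-\infty,a_{\sigma'}])$ both contain $\Phi([b_\sigma,a_{\sigma'}])$, and $\sigma$ and $\sigma'$ agree with $\gamma$ there. I will then use the second assertion of Lemma~\ref{lemma_adapted_homeo}, $(\partial I_{\sigma'})\setminus\overline{I_\sigma}\subset\overline{I_{\sigma'|\sigma}}$, and compare ends. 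Concretely:
\begin{itemize}
\item $\sigma\cap\sigma'$ contains $\phi_\sigma(\R\setminus(I_\sigma\cup I_{\sigma'}))$, which is a union of unbounded pieces of both lines;
\item so $\sigma'\setminus\sigma$ and $\sigma\setminus\sigma'$ must each be a single bounded interval in the respective parametrizations;
\item this should force $\sigma\setminus\sigma'\supset\phi_\sigma(I_\sigma)$ together with $\sigma'\setminus\sigma\supset\phi_{\sigma'}(I_{\sigma'})$, and then $\sigma\triangle\sigma'$ would have two disjoint components.
\end{itemize}
That contradicts the portion property applied to the pair $(\sigma,\sigma')$.

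The general step should be a Helly-type argument on intervals: a finite family of pairwise intersecting intervals in $\R$ has a common point. This reduces everything to the pairwise case. I expect this pairwise disjointness contradiction to be the \textbf{main obstacle}. It needs a careful bookkeeping of which parameter regions of $\sigma$ and $\sigma'$ coincide, and it is exactly where the lemma's inclusion $(\partial I_{\sigma'})\setminus\overline{I_\sigma}\subset\overline{I_{\sigma'|\sigma}}$ enters.

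\textbf{Last assertion.} Fix distinct $\gamma,\sigma,\sigma'$ and decompose
\[J(\gamma,\sigma')=\Phi(\overline{I_{\sigma'}})\cup\phi_{\sigma'}(\overline{I_{\sigma'}}).\]
Points of $\Phi(\overline{I_{\sigma'}})$ lying outside $J(\gamma,\sigma)$ are the points $\Phi(t)$ with $t\in\overline{I_{\sigma'}}\setminus\overline{I_\sigma}$. These lie in $\gamma\cap\sigma$ minus the arc, and they are not in $\sigma'$ except at the endpoints. Points of $\phi_{\sigma'}(I_{\sigma'})$ lie in $\sigma'\setminus\gamma$; those outside $J(\gamma,\sigma)$ are in $\sigma'\setminus\sigma$. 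So every point of $J(\gamma,\sigma')\setminus J(\gamma,\sigma)$ lies in $\sigma\triangle\sigma'$ or in its closure, and hence in $J(\sigma,\sigma')$.

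For connectedness I will describe the difference in parameters. Because $I_\sigma\cap I_{\sigma'}\neq\emptyset$ by the previous step, the set $\overline{I_{\sigma'}}\setminus\overline{I_\sigma}$ has at most two pieces, one at each end of $I_{\sigma'}$. I will then argue that only one end can actually stick out, so this set is a single interval.

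Next I want to show the $\phi_{\sigma'}$ side attaches to that piece at its endpoint. The inclusion from Lemma~\ref{lemma_adapted_homeo} shows that near the protruding boundary point $t\in\partial I_{\sigma'}$, the arc $\phi_{\sigma'}(I_{\sigma'})$ leaves $\sigma$. So the $\phi_{\sigma'}$-part of the difference is a subarc adjacent to $\phi_{\sigma'}(t)=\Phi(t)$. Gluing the two subarcs at this common point gives a connected set.

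If both ends protruded, or the $\phi_{\sigma'}$-part split, then $\sigma'\setminus\sigma$ or $\sigma\setminus\sigma'$ would fail to be a single bounded interval. That contradicts the portion hypothesis, by the same bookkeeping as in the intersection step.
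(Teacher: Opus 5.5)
Your Jordan-curve argument, your reduction of the finite-intersection claim to the pairwise case via the one-dimensional Helly property, and your proof of the inclusion $J(\gamma,\sigma')\setminus J(\gamma,\sigma)\subset J(\sigma,\sigma')$ all match the paper. The gap is the pairwise step. You rightly call it the crux, but you do not carry it out, and the sketch points the wrong way. Suppose $I_\sigma\cap I_{\sigma'}=\emptyset$. Then $\phi_{\sigma'}=\Phi$ on $\overline{I_\sigma}$, so $I_\sigma\subset I_{\sigma'|\sigma}$, while $\phi_{\sigma'}(a_\sigma),\phi_{\sigma'}(b_\sigma)\in\sigma$. Since $I_{\sigma'|\sigma}$ is an interval, this forces $I_{\sigma'|\sigma}=I_\sigma$. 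In other words, the detour $\phi_{\sigma'}(I_{\sigma'})$ lies \emph{inside} $\sigma$, and symmetrically $\phi_\sigma(I_\sigma)\subset\sigma'$. So the containments in your third bullet are never derived, and they are the negation of what the portion hypothesis directly gives. Moreover, the contradiction you state is not one. $\sigma\triangle\sigma'$ is the Jordan curve $J(\sigma,\sigma')$ minus two points, so it always has two components; the portion hypothesis constrains $\sigma\setminus\sigma'$ and $\sigma'\setminus\sigma$ only individually. The missing step is to feed $I_{\sigma'|\sigma}=I_\sigma$ into the inclusion of Lemma~\ref{lemma_adapted_homeo}. This gives $(\partial I_{\sigma'})\setminus\overline{I_\sigma}\subset\overline{I_{\sigma'|\sigma}}=\overline{I_\sigma}$, hence $\partial I_{\sigma'}\subset\overline{I_\sigma}$, which is impossible for disjoint intervals. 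This is the paper's argument.

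The connectedness part also rests on a false claim. You cannot show that only one end of $I_{\sigma'}$ protrudes from $\overline{I_\sigma}$, and two protruding ends do not violate the portion hypothesis. Take $\gamma=\R\times\{0\}$ with $\Phi(t)=(t,0)$. Let $\sigma$ replace $[-1,1]\times\{0\}$ by the upper half of the unit circle, and let $\sigma'$ replace $[-2,2]\times\{0\}$ by the lower half of the circle of radius $2$ centred at the origin. Every pairwise difference is a portion. Yet $I_\sigma=(-1,1)$, $I_{\sigma'}=(-2,2)$, and $\overline{I_{\sigma'}}\setminus\overline{I_\sigma}=[-2,-1)\cup(1,2]$ has two pieces. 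The repair, which is what the paper does, is to attach each piece separately rather than exclude one. Every component of $\overline{I_{\sigma'}}\setminus\overline{I_\sigma}$ contains a point of $(\partial I_{\sigma'})\setminus\overline{I_\sigma}$. By Lemma~\ref{lemma_adapted_homeo}, whose proof produces the approximating points inside $I_{\sigma'}$, that point lies in $\overline{I_{\sigma'}\cap I_{\sigma'|\sigma}}$. The $\phi_{\sigma'}$-part of the difference is exactly $\phi_{\sigma'}(I_{\sigma'}\cap I_{\sigma'|\sigma})$, which is connected as the image of an intersection of two intervals. Your gluing argument at a protruding endpoint then works verbatim for each end.
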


\begin{proof}
    Fix $\gamma\in\Gamma$ and an homeomorphism $\Phi:\R\to\gamma$. Let 
        \[(\phi_\sigma)_{\sigma\in\Gamma\setminus\{\gamma\}}\quad\text{and}\quad(I_\sigma)_{\sigma\in\Gamma\setminus\{\gamma\}}=[(a_\sigma,b_\sigma)]_{\sigma\in\Gamma\setminus\{\gamma\}}\]
    be the homeomorphisms and non-empty bounded open intervals given by Lemma~\ref{lemma_adapted_homeo}. Let $\gamma'\in\Gamma\setminus\{\gamma\}$. Since by construction $\Phi(I_{\gamma'})=\gamma\setminus\gamma'$ and $\phi_{\gamma'}(I_{\gamma'})=\gamma'\setminus\gamma$ are disjoint, and $\Phi=\phi_{\gamma'}$ on the two distinct points of $\partial I_{\gamma'}$, then
        \[J(\gamma,\gamma')=\Phi(\overline{I_{\gamma'}})\cup\phi_{\gamma'}(\overline{I_{\gamma'}})\]
    is a Jordan curve.

    Now, let us fix a non-empty finite subset $\Gamma_0\subset\Gamma\setminus\{\gamma\}$ and show that $\gamma\setminus\bigcup_{\sigma\in\Gamma_0}\sigma$ is a non-empty portion of $\gamma$. By construction,
        \[\gamma\setminus\bigcup_{\sigma\in\Gamma_0}\sigma=\Phi(I_{\Gamma_0})\quad\text{where}\quad I_{\Gamma_0}\coloneqq\bigcap_{\sigma\in\Gamma_0}I_\sigma.\]
    Let $\sigma,\sigma'\in\Gamma_0$ be such that $b_{\sigma'}=\max_{\nu\in\Gamma_0} b_\nu$ and $a_{\sigma}=\min_{\nu\in\Gamma_0} a_\nu$. Then, $a_{\sigma}\leq b_{\sigma'}$ and
        \[I_{\Gamma_0}=(a_{\sigma}, b_{\sigma'}).\]
    It suffices therefore to show that $a_{\sigma}<b_{\sigma'}$. Assume by contradiction that $a_{\sigma}=b_{\sigma'}$. Then $I_{\sigma}$ and $I_{\sigma'}$ must be disjoint. Fix $t\in I_{\sigma}$. By definition, $\Phi(t)\notin\sigma$, and $\phi_{\sigma'}(t)=\Phi(t)$ since $t\notin I_{\sigma'}$. This gives $\phi_{\sigma'}(t)\in\sigma'\setminus\sigma$ and hence
        \[t\in I_{\sigma'|\sigma}.\]
    Additionally, both $a_{\sigma}$ and $b_{\sigma}$ belong to $\R\setminus(I_{\sigma}\cup I_{\sigma'})$, so that $\phi_{\sigma'}(a_{\sigma})=\Phi(a_{\sigma})=\phi_{\sigma}(a_{\sigma})\in\sigma$ and $\phi_{\sigma'}(b_{\sigma})=\Phi(b_{\sigma})=\phi_{\sigma}(b_{\sigma})\in\sigma$, which implies that
        \[a_{\sigma}\notin I_{\sigma'|\sigma}\quad\text{and}\quad b_{\sigma}\notin I_{\sigma'|\sigma}.\]
    Together, it shows that $I_{\sigma'|\sigma}$ must contain $t\in(a_\sigma,b_\sigma)$ but not $a_\sigma$ nor $b_\sigma$. Since $\sigma'\setminus\sigma$ is a portion of $\sigma'$, $I_{\sigma'|\sigma}=\phi_{\sigma'}^{-1}(\sigma'\setminus\sigma)$ must be an open interval, thus one must have
        \[I_{\sigma'|\sigma}\subset I_\sigma.\]
    But then, applying Lemma~\ref{lemma_adapted_homeo}, it comes $(\partial I_{\sigma'})\setminus\overline{I_\sigma}\subset\overline{I_{\sigma'|\sigma}}\subset\overline{I_\sigma}$, i.e., $\partial I_{\sigma'}\subset\overline{I_\sigma}$, which is absurd since $b_{\sigma'}\in\partial I_{\sigma'}$ but $b_{\sigma'}\notin\partial I_\sigma$ as $b_{\sigma'}>a_{\sigma'}\geq b_\sigma$.

    It now remains to prove the last assertion. Fix distinct $\sigma,\sigma'\in\Gamma\setminus\{\gamma\}$. Let us prove that $J(\gamma,\sigma')\setminus J(\gamma,\sigma)$ is a connected subset of $J(\sigma,\sigma')$. First, observe that by construction, one can write
        \[J(\gamma,\sigma')=\Phi(\overline{I_{\sigma'}})\cup \phi_{\sigma'}(I_{\sigma'})\quad\text{and}\quad J(\gamma,\sigma)=\Phi(\overline{I_{\sigma}})\cup \phi_{\sigma}(I_{\sigma}).\]
    Then, taking the difference, it comes
        \begin{equation}
            \label{eq_Jordan_diff}
            \begin{split}
                J(\gamma,\sigma')\setminus J(\gamma,\sigma)&=\Phi(\overline{I_{\sigma'}})\setminus[\Phi(\overline{I_{\sigma}})\cup \phi_{\sigma}(I_{\sigma})]\cup\phi_{\sigma'}(I_{\sigma'})\setminus[\Phi(\overline{I_\sigma})\cup\phi_{\sigma}(I_{\sigma})]\\
                &=\Phi(\overline{{I_{\sigma'}}}\setminus\overline{I_\sigma})\cup\phi_{\sigma'}(I_{\sigma'})\setminus\phi_{\sigma}(I_{\sigma})\\
                &=\Phi(\overline{{I_{\sigma'}}}\setminus\overline{I_\sigma})\cup\phi_{\sigma'}(I_{\sigma'}\cap I_{\sigma'|\sigma})
            \end{split}
        \end{equation}
    where using the definition of $I_\sigma$ and $I_{\sigma'}$, the first equality follows from $\phi_\sigma(I_\sigma)\cap\Phi(\overline{I_{\sigma'}})\subset\phi_\sigma(I_\sigma)\cap\gamma=\emptyset$ as well as $\phi_{\sigma'}(I_{\sigma'})\cap\Phi(\overline{I_\sigma})\subset\phi_{\sigma'}(I_{\sigma'})\cap\gamma=\emptyset$, and $\phi_\sigma(\R\setminus I_{\sigma})\cap\phi_{\sigma'}(I_{\sigma'})\subset\gamma\cap\phi_{\sigma'}(I_{\sigma'})=\emptyset$ yields the last line. Now, by construction, $\phi_{\sigma'}(I_{\sigma'}\cap I_{\sigma'|\sigma})\subset\phi_{\sigma'}(I_{\sigma'|\sigma})=\sigma'\setminus\sigma$, and
        \[\Phi(\overline{{I_{\sigma'}}}\setminus\overline{I_\sigma})\subset\overline{\Phi(I_{\sigma'})\setminus\Phi(I_\sigma)}\subset\overline{(\gamma\setminus\sigma')\setminus(\gamma\setminus\sigma)}\subset\overline{\sigma\setminus\sigma'}.\]
    Injected in~\eqref{eq_Jordan_diff}, it shows
        \[J(\gamma,\sigma')\setminus J(\gamma,\sigma)\subset\overline{\sigma\setminus\sigma'}\cup\sigma\setminus\sigma'\subset \overline{\sigma\triangle\sigma'}=J(\sigma,\sigma').\]
    It remains to check connectedness. To that end, observe that since $I_{\sigma'}\cap I_{\sigma'|\sigma}$ is an open interval,
        \[\phi_{\sigma'}(I_{\sigma'}\cap I_{\sigma'|\sigma})\]
    is connected. Then, observe that although $\overline{{I_{\sigma'}}}\setminus\overline{I_\sigma}$ may be disconnected, each of its non-empty components must contain a point of $(\partial I_{\sigma'})\setminus\overline{I_\sigma}$. Finally, since
        \[(\partial I_{\sigma'})\setminus\overline{I_\sigma}\subset\overline{I_{\sigma'|\sigma}}\cap \overline{I_{\sigma'}}=\overline{I_{\sigma'}\cap I_{\sigma'|\sigma}}\]
    from Lemma~\ref{lemma_adapted_homeo}, each component of $\Phi(\overline{{I_{\sigma'}}}\setminus\overline{I_\sigma})$ connects to $\phi_{\sigma'}(I_{\sigma'}\cap I_{\sigma'|\sigma})$, so that injecting in~\eqref{eq_Jordan_diff}, it comes $J(\gamma,\sigma')\setminus J(\gamma,\sigma)$ is connected.
\end{proof}

It remains to check that $\cdot\in(\cdot,\cdot)_\Gamma$ satisfies the axioms of a betweenness relation given by \cite{betweenness}, namely symmetry (Corollary~\ref{coro_containment_equiv}) as well as transitivity and the trichotomy property (Proposition~\ref{prop_transitive_and_trichotomy}). The following is a consequence of the previous proposition and already establishes symmetry.

\begin{corollary}
    \label{coro_containment_equiv}
    For all distinct $\gamma,\gamma',\sigma\in\Gamma$, one has either
        \[J(\gamma,\sigma)\subset\clInt J(\gamma, \gamma')\quad\text{or}\quad J(\gamma,\sigma)\subset\clExt J(\gamma,\gamma').\]
    Furthermore, $(\cdot,\cdot)_{\Gamma}$ is \emph{symmetric}, i.e., for all distinct $\gamma,\gamma'\in\Gamma$, $(\gamma,\gamma')_\Gamma=(\gamma',\gamma)_\Gamma$.
\end{corollary}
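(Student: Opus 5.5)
The first assertion follows from the last part of Proposition~\ref{prop_arc_diff}. Fix distinct $\gamma,\gamma',\sigma\in\Gamma$ and apply that proposition with $\sigma'$ replaced by $\sigma$ and $\sigma$ replaced by $\gamma'$. This shows that
\[A\coloneqq J(\gamma,\sigma)\setminus J(\gamma,\gamma')\]
is a connected subset of $\R^2\setminus J(\gamma,\gamma')$. Hence $A$ lies entirely in $\Int J(\gamma,\gamma')$ or entirely in $\Ext J(\gamma,\gamma')$; this includes the case $A=\emptyset$, where either alternative holds trivially. Since $J(\gamma,\sigma)=A\cup\bigl(J(\gamma,\sigma)\cap J(\gamma,\gamma')\bigr)$ and $J(\gamma,\gamma')$ is contained in both $\clInt J(\gamma,\gamma')$ and $\clExt J(\gamma,\gamma')$, we obtain $J(\gamma,\sigma)\subset\clInt J(\gamma,\gamma')$ or $J(\gamma,\sigma)\subset\clExt J(\gamma,\gamma')$.

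For symmetry, note that $J(\gamma,\gamma')=J(\gamma',\gamma)$, so it suffices to prove that
\[J(\gamma,\sigma)\subset\clInt J\quad\Longleftrightarrow\quad J(\gamma',\sigma)\subset\clInt J,\qquad J\coloneqq J(\gamma,\gamma').\]
The plan is to show that $A$ and $A'\coloneqq J(\gamma',\sigma)\setminus J$ always lie on the same side of $J$, and to handle separately the cases where one of them is empty. The key observation is that both sets contain the portion $\sigma\setminus(\gamma\cup\gamma')$. Indeed, from the decomposition \eqref{eq_Jordan_diff} (with $\sigma'\to\sigma$, $\sigma\to\gamma'$), $A\supset\phi_\sigma(I_\sigma\cap I_{\sigma|\gamma'})=\sigma\setminus(\gamma\cup\gamma')$, and symmetrically $A'\supset\sigma\setminus(\gamma'\cup\gamma)$. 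By Proposition~\ref{prop_arc_diff} applied with base line $\sigma$ and $\Gamma_0=\{\gamma,\gamma'\}$, this set is a \emph{non-empty} portion of $\sigma$. So $A\cap A'\neq\emptyset$, and each of $A,A'$ is nonempty and connected in $\R^2\setminus J$. Therefore they lie in the same component of $\R^2\setminus J$.

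It remains to turn this into the equivalence of the containments. Suppose $J(\gamma,\sigma)\subset\clInt J$. Then $A\subset\clInt J\setminus J=\Int J$, so $A'\subset\Int J$ as well, and hence $J(\gamma',\sigma)=A'\cup(J(\gamma',\sigma)\cap J)\subset\clInt J$. The converse is identical with the roles of $\gamma$ and $\gamma'$ exchanged, so $(\gamma,\gamma')_\Gamma=(\gamma',\gamma)_\Gamma$.

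The main obstacle is the identification $\sigma\setminus(\gamma\cup\gamma')\subset A$, which requires tracking the parametrizations of Lemma~\ref{lemma_adapted_homeo} carefully in \eqref{eq_Jordan_diff}. If that bookkeeping is awkward, a direct set-theoretic check works instead: a point of $\sigma\setminus(\gamma\cup\gamma')$ lies in $\sigma\setminus\gamma\subset J(\gamma,\sigma)$, and it does not lie in $J(\gamma,\gamma')\subset\gamma\cup\gamma'$. The same argument gives the inclusion in $A'$.
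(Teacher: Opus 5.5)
Your proposal is correct and follows essentially the paper's argument. The first part is identical, and for symmetry the paper uses the same key fact, that the nonempty set $\sigma\setminus(\gamma\cup\gamma')$ lies in both $J(\gamma,\sigma)\setminus J(\gamma,\gamma')$ and $J(\gamma',\sigma)\setminus J(\gamma,\gamma')$; it phrases this as a chain of equivalences through $\sigma\setminus(\gamma\cup\gamma')\subset\Int J(\gamma,\gamma')$ rather than your observation that two intersecting connected sets lie in the same component.
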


\begin{proof}
    Let $\gamma,\gamma',\sigma\in\Gamma$ be distinct. Since $J(\gamma,\sigma)\setminus J(\gamma,\gamma')$ avoids $J(\gamma,\gamma')$, and is connected by Proposition~\ref{prop_arc_diff}, one has either
        \[J(\gamma,\sigma)\setminus J(\gamma,\gamma')\subset \Int J(\gamma,\gamma')\quad\text{or}\quad J(\gamma,\sigma)\setminus J(\gamma,\gamma')\subset\Ext J(\gamma,\gamma').\]
    Then, taking the union with $J(\gamma,\sigma)\cap J(\gamma,\gamma')\subset J(\gamma,\gamma')=\clInt J(\gamma,\gamma')\cap \clExt J(\gamma,\gamma')$, the first assertion follows. Then, one can observe that since
        \[\emptyset\neq\sigma\setminus(\gamma\cup\gamma')\subset J(\gamma,\sigma)\setminus J(\gamma,\gamma')\]
    from Proposition~\ref{prop_arc_diff} and $J(\gamma,\gamma')=J(\gamma',\gamma)$, one can write
        \begin{equation*}
            \begin{split}
                \sigma\in(\gamma,\gamma')_\Gamma&\iff J(\gamma,\sigma)\subset\clInt J(\gamma, \gamma')\\
                &\iff \sigma\setminus (\gamma\cup\gamma')\subset\Int J(\gamma,\gamma')\\
                &\iff \sigma\setminus (\gamma'\cup\gamma)\subset\Int J(\gamma',\gamma)\\
                &\iff J(\gamma',\sigma)\subset\clInt J(\gamma', \gamma)
                \iff\sigma\in(\gamma',\gamma)_\Gamma,
            \end{split}
        \end{equation*}
    which proves symmetry.
\end{proof}

Now, in order to tackle the geometry of interacting Jordan curves and to obtain the remaining sought properties, we will need the two following technical lemmas.

\begin{lemma}
    \label{lemma_inclusions_deduction}
    For any Jordan curves $J,J'\subset\R^2$, one has $J\subset\clInt J'\implies\Int J\subset\Int J'$, and
        \[[J\subset\clExt J'\text{ and }J'\not\subset\clInt J]\implies\Int J\subset\Ext J'.\]
\end{lemma}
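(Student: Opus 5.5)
We prove the two implications separately, using only the Jordan curve theorem in the form recalled above: for a Jordan curve $J$, the sets $\Int J$ and $\Ext J$ are connected and open, $\Int J$ is bounded, $\Ext J$ is unbounded, and both have boundary $J$.

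For the first implication, assume $J\subset\clInt J'$. The plan is to show $\Ext J'\subset\Ext J$ and then take complements. The set $\Ext J'$ is connected. It avoids $J$, because $J\subset\clInt J'=\R^2\setminus\Ext J'$. It is unbounded. Hence it lies in a single component of $\R^2\setminus J$, and since it is unbounded that component must be $\Ext J$. Taking complements gives $\clInt J\subset\clInt J'$. Since $\Int J$ is open, we get $\Int J\subset\Int(\clInt J')$. It remains to check that $\Int(\clInt J')=\Int J'$, and this holds because every point of $J'$ is a boundary point of $\Ext J'$. Hence $\Int J\subset\Int J'$.

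For the second implication, assume $J\subset\clExt J'$ and $J'\not\subset\clInt J$. The second hypothesis gives a point $z\in J'\cap\Ext J$. Since $\Ext J$ is open, a small ball $B$ around $z$ lies in $\Ext J$. Since $z\in J'=\partial\Int J'$, this ball meets $\Int J'$, so $\Int J'\cap\Ext J\neq\emptyset$.

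Now $\Int J'$ is connected and avoids $J$, because $J\subset\clExt J'=\R^2\setminus\Int J'$. So $\Int J'$ lies in one component of $\R^2\setminus J$. Since it meets $\Ext J$, it must lie in $\Ext J$, that is, $\Int J'\subset\Ext J$. Taking complements gives $\clInt J\subset\clExt J'$, and therefore $\Int J\subset\Int(\clExt J')$.

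We finish exactly as in the first part: every point of $J'$ is a boundary point of $\Int J'$, so $\Int(\clExt J')=\Ext J'$. This gives $\Int J\subset\Ext J'$.

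The only mildly delicate step is the identity $\Int(\clInt J')=\Int J'$, together with its analogue for the exterior. Both follow from the fact that each complementary component of a Jordan curve has the whole curve as its boundary. The rest is connectedness bookkeeping.
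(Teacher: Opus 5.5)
Your proof is correct and is essentially the paper's: both parts rest on the fact that a connected set avoiding $J$ stays in one complementary component of $J$, followed by upgrading the closed inclusion to an open one using that every point of $J'$ is a boundary point of both components of $\R^2\setminus J'$; you apply the connectedness fact directly to $\Ext J'$ and to $\Int J'$, whereas the paper phrases it through unbounded paths issued from points of $\clInt J$ and, for the second part, through a path-crossing argument in contrapositive form. Your direct version is, if anything, tidier, since it makes explicit that the topological interior of $\clExt J'$ is $\Ext J'$, a step the paper uses only implicitly.
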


\begin{proof}
    Fix $z\in\clInt J$ and consider an unbounded continuous path $f:\R_+\to\R^2$ with $f(0)=z$. Since $\clInt J$ is bounded, there exists $t_0\in\R_+$ such that $f(t_0)\in\partial\clInt J=J$. Then, as $J\subset\clInt J'$ and $\clInt J'$ is unbounded, there must also exist $t_1\geq t_0$ such that $f(t_1)\in\partial\clInt J'=J'$. This shows that all unbounded continuous path started at $z$ eventually cross $J'$, i.e., $z\in\clInt J'$. Then, one has $\clInt J\subset\clInt J'$ and finally, removing the boundaries, it comes $\Int J\subset\Int J'$.
    
    Now remains to check that $[J\subset\clExt J'\text{ and }J'\not\subset\clInt J]\implies\Int J\subset\Ext J'$. To do so, observe that it is sufficient by contrapositive to show $[\Int J\not\subset\Ext J'\text{ and }J'\not\subset\clInt J]\implies J\not\subset\clExt J'$. Assume that $\Int J\not\subset\Ext J$ and $J'\not\subset\clInt J$. From $\Int J\not\subset\Ext J'$, one has $\Int J\not\subset\clExt J'$ and there must exist
        \[z_0\in\Int J\cap\Int J'.\]
    Additionally, one has $\Int J'\not\subset\Int J$ from $\partial\Int J'=J'\not\subset\clInt J$, and there must exist
        \[z_1\in\Int J\setminus\Int J'.\]
    Then, as $z_0$ and $z_1$ both belong to $\Int J$ which is path-connected, there exists a continuous path $g:[0,1]\to\Int J$ with $g(0)=z_0$ and $g(1)=z_1$. Since $g(0)\in\Int J'$ but $g(1)\notin\Int J'$, and $g$ is continuous, there exists $t\in[0,1]$ such that $g(t)\in\partial\Int J'=J'$. Finally, as $g(t)\in\Int J'$ by construction of $g$, it shows $g(t)\in J'\cap\Int J$ and thus, $J'\not\subset\clExt J$. The proof is complete.
\end{proof}

\begin{lemma}
    \label{lemma_local_identification}
    For any distinct $\gamma,\sigma,\sigma'\in\Gamma$ and $z\in J(\gamma,\sigma')\setminus J(\gamma,\sigma)$, there exists an open set $U\subset\R^2$ containing $z$ such that
        \[\{U\cap\Int J(\gamma,\sigma'), U\cap\Ext J(\gamma,\sigma')\}=\{U\cap\Int J(\sigma,\sigma'), U\cap\Ext J(\sigma,\sigma')\}\]
\end{lemma}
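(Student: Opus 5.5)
The idea is to show that near $z$ the two Jordan curves $J(\gamma,\sigma')$ and $J(\sigma,\sigma')$ coincide. Once this holds, a small disc around $z$ is cut by the common arc into the same two local sides for both curves, and these sides are exactly the local interior and exterior pieces.

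\emph{Step 1: locate $z$.} By Proposition~\ref{prop_arc_diff}, $z\in J(\gamma,\sigma')\setminus J(\gamma,\sigma)\subset J(\sigma,\sigma')$, so $z$ lies on both curves. Moreover $J(\gamma,\sigma)$ is compact and avoids $z$, so there is $r>0$ with $B(z,r)\cap J(\gamma,\sigma)=\emptyset$.

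\emph{Step 2: local coincidence.} I claim that, after shrinking $r$, we have $B(z,r)\cap J(\gamma,\sigma')=B(z,r)\cap J(\sigma,\sigma')$. Since $B(z,r)$ avoids $J(\gamma,\sigma)\supset\overline{\gamma\setminus\sigma}\cup\overline{\sigma\setminus\gamma}$, the three sets $\gamma$, $\sigma$ and $\gamma\cap\sigma$ have the same trace on $B(z,r)$. Hence $\gamma'\setminus\gamma$ and $\gamma'\setminus\sigma$ (with $\gamma'=\sigma'$) agree inside the ball, and so do $\gamma\setminus\sigma'$ and $\sigma\setminus\sigma'$. The closures of symmetric differences are not quite local, so I would handle them as follows. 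A point of $B(z,r)$ lies in $\overline{\gamma\triangle\sigma'}$ if and only if it is a limit of points of $\gamma\triangle\sigma'$. Such points can be taken inside the ball, and inside the ball $\gamma\triangle\sigma'=\sigma\triangle\sigma'$. This gives equality of the closures on the open ball.

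\emph{Step 3: local structure.} By Step 2, $A:=B(z,r)\cap J(\gamma,\sigma')$ is the trace of a single Jordan curve $J:=J(\gamma,\sigma')$ near $z$. I will use the standard fact (Schoenflies, or a local form of the Jordan curve theorem) that there is a connected open neighbourhood $U\subset B(z,r)$ of $z$ such that $U\setminus J$ has exactly two components, one contained in $\Int J$ and the other in $\Ext J$, both nonempty because $z\in\partial\Int J\cap\partial\Ext J$. Concretely, take a homeomorphism of $\R^2$ sending $J$ to the unit circle, pull back a small disc, and intersect with the ball. Since $U\setminus J(\gamma,\sigma')=U\setminus J(\sigma,\sigma')$, this is a single set $W$ with exactly two components $W_1$ and $W_2$. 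For each of the two curves, $U\cap\Int$ and $U\cap\Ext$ are disjoint open sets whose union is $W$. Each must therefore be a union of components of $W$, and each is nonempty because $z$ lies on the boundary of both the interior and the exterior of each curve. Consequently $\{U\cap\Int,\,U\cap\Ext\}=\{W_1,W_2\}$ for both curves, which is the claim.

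The main obstacle is the existence of $U$ for which $U\setminus J$ has exactly two components. The Jordan curve theorem alone does not provide this, and the cleanest route is to invoke the Schoenflies theorem. If that is unavailable, I would instead use $U=\Phi$-image of a thin tubular neighbourhood, built from the explicit local parametrization $\Phi$ or $\phi_{\sigma'}$ of Lemma~\ref{lemma_adapted_homeo}.
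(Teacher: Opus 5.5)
Your proof is correct and follows essentially the same route as the paper: Jordan--Sch\"onflies gives a neighbourhood $U$ of $z$ avoiding $J(\gamma,\sigma)$ in which $U\setminus J(\gamma,\sigma')$ has exactly two components. On such a $U$ the two curves coincide, so the components are identified. The paper gets this local coincidence from $J(\gamma,\sigma')\triangle J(\sigma,\sigma')\subset J(\gamma,\sigma)$ via Proposition~\ref{prop_arc_diff}, whereas you derive it directly from $\gamma\cap B(z,r)=\sigma\cap B(z,r)$. You also spell out the nonemptiness step that the paper leaves implicit.

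One small fix: do not pull back a disc and then intersect it with the ball, since that can destroy the two-component property. Instead, choose the disc inside $\Psi(B(z,r))$, so that its preimage already lies in the ball.
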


\begin{proof}
    Let $\gamma,\sigma,\sigma'\in\Gamma$. Fix $z\in J(\gamma,\sigma')\setminus J(\gamma,\sigma)$. Applying the Jordan-Schönflies theorem (see~\cite{topology_book}), there exists a homeomorphism
        \[\Psi:\R^2\to\R^2\]
    such that $\Psi[J(\gamma,\sigma')]=\mathbb S^1$. Since $z\in\R^2\setminus J(\gamma,\sigma)$ and $\Psi[\R^2\setminus J(\gamma,\sigma')]$ is open, there exist $\varepsilon>0$ small enough so that the open disk
        \[D\coloneqq\{y\in\R^2:\|y-\Psi(x)\|<\varepsilon\}\]
    is included in $\Psi[\R^2\setminus J(\gamma,\sigma)]$. Clearly,
        \[D\setminus\mathbb S^1\]
    has exactly two connected components. Now set $U\coloneqq\Psi^{-1}(D)$. Then,
        \[U\setminus J(\gamma,\sigma')=\Psi^{-1}(D\setminus\mathbb S^1)\]
    must consist of two components, which are necessarily $U\cap\Int J(\gamma,\sigma')$ and $U\cap\Ext J(\gamma,\sigma')$. Now, observe that
        \begin{equation*}
            \begin{split}
                [U\setminus J(\gamma,\sigma')]\triangle[U\setminus J(\sigma,\sigma')]&=U\cap[J(\gamma,\sigma')\triangle J(\sigma,\sigma')]\\
                &\subset U\cap J(\gamma,\sigma)\\
                &\subset[\R^2\setminus J(\gamma,\sigma)]\cap J(\gamma,\sigma)\\
                &=\emptyset,
            \end{split}
        \end{equation*}
    where the first inclusion uses that both $J(\gamma,\sigma')\setminus J(\sigma,\sigma')$ and $J(\gamma,\sigma')\setminus J(\sigma,\sigma')$ are subsets of $J(\gamma,\sigma)$ from Proposition~\ref{prop_arc_diff}. This shows $U\setminus J(\sigma,\sigma')=U\setminus J(\gamma,\sigma')$, so that the two components of those sets can be alternatively expressed as $U\cap\Int J(\sigma,\sigma')$ and $U\cap\Ext J(\sigma,\sigma')$, and the result follows by identifying.
\end{proof}

The last key ingredient for establishing the transitive property of $\cdot\in(\cdot,\cdot)_\Gamma$ is provided by the following proposition.

\begin{proposition}
    \label{prop_interior_diff}
    For all distinct $\gamma,\sigma,\sigma'\in\Gamma$,
        \[\clInt J(\gamma,\sigma')\setminus\clInt J(\gamma,\sigma)\subset\clInt J(\sigma,\sigma').\]
\end{proposition}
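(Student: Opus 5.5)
We argue pointwise. Write $J_1\coloneqq J(\gamma,\sigma)$, $J_2\coloneqq J(\gamma,\sigma')$ and $J_3\coloneqq J(\sigma,\sigma')$, and fix $z\in\clInt J_2\setminus\clInt J_1$; we want $z\in\clInt J_3$. Since $z\notin\clInt J_1$, we have $z\in\Ext J_1$, and in particular $z\notin J_1$. If $z\in J_3$ there is nothing to prove, so assume $z\notin J_3$. The plan is to join $z$ to a point of $J_3\setminus J_1$ by a path avoiding $J_1\cup J_3$. We then use Lemma~\ref{lemma_local_identification} to transfer the inside/outside information from $J_2$ to $J_3$ near that point.

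\textbf{Step 1 (reduction to a region).} The set $\Ext J_1$ is open and connected. By Proposition~\ref{prop_arc_diff}, the pieces $J_2\setminus J_1$ and $J_3\setminus J_1$ are subsets of the other curves, and $J_2\cup J_1 = J_3\cup J_1$ follows from the symmetric-difference description: both equal $\overline{\gamma\triangle\sigma}\cup\overline{\gamma\triangle\sigma'}$ up to the common parts. I would verify this identity first using the parametrizations of Lemma~\ref{lemma_adapted_homeo}. Concretely, $J_2\setminus J_1\subset J_3$ and, symmetrically (apply Proposition~\ref{prop_arc_diff} with roles $(\sigma,\gamma,\sigma')$), $J_3\setminus J_1 \subset J_2$. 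Hence $J_2\cap\Ext J_1=J_3\cap\Ext J_1$.

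\textbf{Step 2 (component argument).} Let $W$ be the connected component of $\Ext J_1\setminus J_2$ containing $z$. By Step 1, $W$ is also a component of $\Ext J_1\setminus J_3$. Since $z\in \Int J_2$ (as $z\notin J_2$ would otherwise need handling: if $z\in J_2$ then $z\in J_2\setminus J_1\subset J_3$, done), $W\subset\Int J_2$ is bounded. The boundary of $W$ lies in $J_1\cup J_2$. I claim $\partial W$ meets $J_2\setminus J_1$. Suppose instead that $\partial W\subset J_1$. Then $W$ is open and closed in $\Ext J_1$, so $W=\Ext J_1$, which is unbounded, a contradiction. So pick $y\in\partial W\cap (J_2\setminus J_1)$.

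\textbf{Step 3 (local identification).} Apply Lemma~\ref{lemma_local_identification} at $y$. This gives an open $U\ni y$ on which the pair $\{\Int J_2,\Ext J_2\}$ coincides with $\{\Int J_3,\Ext J_3\}$. We need the matching to send $\Int$ to $\Int$. Shrinking $U$ inside $\Ext J_1$, the set $U\setminus J_2=U\setminus J_3$ has two sides. Points of $W$ near $y$ lie in $U\cap \Int J_2$. The key point, and the main obstacle, is to rule out the swapped matching $U\cap\Int J_2=U\cap\Ext J_3$. To do this, I would note that $W$ avoids $J_3$, so $W$ lies entirely in $\Int J_3$ or entirely in $\Ext J_3$. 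If $W\subset\Ext J_3$, then near every boundary point $y'\in\partial W\cap(J_2\setminus J_1)$ the matching is swapped. I would then derive a contradiction by orientation: the sides of $J_2$ and $J_3$ along the common arc $J_2\setminus J_1$ agree or disagree globally along that connected arc (Proposition~\ref{prop_arc_diff} gives its connectedness). At its endpoints, which lie on $J_1$, the exteriors of both curves contain the unbounded region $\Ext J_1\cap\Ext J_2$, reached from far away through $\Ext J_1$. Hence they must agree. Failing a clean orientation argument, I would instead use Lemma~\ref{lemma_inclusions_deduction} with the dichotomy of Corollary~\ref{coro_containment_equiv} to compare the interiors directly. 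Once the matching is $\Int\leftrightarrow\Int$, we get $W\cap U\subset\Int J_3$, so $W\subset \Int J_3$ and $z\in\clInt J_3$.
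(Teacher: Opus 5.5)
Your Steps 1 and 2 are sound: Proposition~\ref{prop_arc_diff} gives $J_2\setminus J_1=J_3\setminus J_1$, and the clopen argument yields $y\in\partial W\cap A$, where $A\coloneqq J_2\setminus J_1$. The gap is Step 3, and that is where the whole content of the statement lies. How the sides of $J_2$ and $J_3$ match along $A$ depends on the configuration. If $J_2\subset\clInt J_1$ (i.e.\ $\sigma'\in(\gamma,\sigma)_\Gamma$), then $\Int J_2$ and $\Int J_3$ lie on opposite sides of $A$. So any valid argument must use $A\subset\Ext J_1$ in an essential way. Your orientation sketch does not do this. Lemma~\ref{lemma_local_identification} gives no information at the endpoints of $A$, which lie on $J_1$. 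You also never show that the $\Ext J_2$-side of $A$ is connected to infinity inside $\Ext J_1\setminus J_2$. For two arbitrary Jordan curves, $\Ext J_1\cap\Ext J_2$ may have bounded components, and such a component could a priori lie in $\Int J_3$.

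The fallback fails too. By Corollary~\ref{coro_containment_equiv}, there are two cases.
\begin{itemize}
\item If $J_1\subset\clInt J_2$, symmetry gives $J_3\subset\clInt J_2$, hence $\Ext J_2\subset\Ext J_3$ by Lemma~\ref{lemma_inclusions_deduction}, and the matching is forced. This is the paper's Case~1.
\item If $J_1\subset\clExt J_2$, these tools only give $\Int J_1\cap\Int J_2=\emptyset$ and do not locate $\Int J_2$ relative to $J_3$. Concluding $\gamma\in(\sigma,\sigma')_\Gamma$ there would need the trichotomy of Proposition~\ref{prop_transitive_and_trichotomy}, whose proof uses the present proposition, so the argument would be circular.
\end{itemize}
The paper handles this second case separately. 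An unbounded path from $z$ leaves the compact set $\clInt J_1\cup\clInt J_2$ at a point of $J_1\cup J_2$. If that point avoided $J_3$, it would lie in $J_1\cap J_2\setminus J_3$. There, local identification together with $\Int J_1\cap\Int J_2=\emptyset$ puts a whole neighbourhood inside $\clInt J_1\cup\clInt J_2$, which is absurd.

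Your approach can be completed without the case split by reusing your Step 2 trick. Let $W_\infty$ be the component of $\Ext J_1\setminus J_2$ containing the complement of a large disc. By Step 1 it avoids $J_3$, so $W_\infty\subset\Ext J_2\cap\Ext J_3$. It is disjoint from $W$, and $\partial W_\infty\subset J_1$ would force $W_\infty=\Ext J_1\ni z$. Hence some $y'\in\partial W_\infty\cap A$ exists. At $y'$, Lemma~\ref{lemma_local_identification} must match $\Ext J_2$ with $\Ext J_3$, hence $\Int J_2$ with $\Int J_3$.

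Now consider the points of $A$ admitting a neighbourhood with the matching $\Int J_2\leftrightarrow\Int J_3$, and those admitting one with $\Int J_2\leftrightarrow\Ext J_3$. These two sets are open in $A$ and cover $A$ by the lemma. They are disjoint, since every neighbourhood of a point of $J_2$ meets $\Int J_2$. As $A$ is connected (Proposition~\ref{prop_arc_diff}), the matching at $y'$ propagates to $y$, giving $W\subset\Int J_3$.

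With this step written out, you would have a genuinely different, case-free proof. As submitted, the decisive step is missing.
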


\begin{proof}
    Fix distinct $\gamma,\sigma,\sigma'\in\Gamma$. If $J(\gamma,\sigma')\subset\clInt J(\gamma,\sigma)$, then $\clInt J(\gamma,\sigma')\setminus\clInt J(\gamma,\sigma)=\emptyset$ using Lemma~\ref{lemma_inclusions_deduction} and the result is clear. Let us then assume that
        \[J(\gamma,\sigma')\not\subset\clInt J(\gamma,\sigma)\]
    Let $z\in\clInt J(\gamma,\sigma')\setminus\clInt J(\gamma,\sigma)$. If $z\in J(\gamma,\sigma')$, since $J(\gamma,\sigma)\subset\clInt J(\gamma,\sigma)$, one gets from Proposition~\ref{prop_arc_diff} that
        \[z\in J(\gamma,\sigma')\setminus J(\gamma,\sigma)\subset J(\sigma,\sigma')\subset\clInt J(\sigma,\sigma').\]
    Now, assume $z\notin J(\gamma,\sigma')$, i.e.,
        \[z\in\Int J(\gamma,\sigma')\cap\Ext J(\gamma,\sigma).\]
    Let us distinguish the two cases $J(\gamma,\sigma)\subset\clInt J(\gamma,\sigma')$ and $J(\gamma,\sigma)\not\subset\clInt J(\gamma,\sigma')$.

    \emph{Case 1.} Assume $J(\gamma,\sigma)\subset \clInt J(\gamma,\sigma')$. Then, Corollary~\ref{coro_containment_equiv} gives $J(\sigma,\sigma')\subset\clInt J(\gamma,\sigma')$ and Lemma~\ref{lemma_inclusions_deduction} yields
        \begin{equation}
            \label{eq_inclusion_match}
            \Int J(\sigma,\sigma')\subset\Int J(\gamma,\sigma')\quad\text{and}\quad \Ext(\gamma,\sigma')\subset\Ext J(\sigma,\sigma').
        \end{equation}
    Since $z\in\Ext J(\gamma,\sigma)$, there exists an unbounded continuous path $f:\R_+\to\Ext J(\gamma,\sigma)$ with $f(0)=z$. As $z\in\Int J(\gamma,\sigma')$ and $\Int J(\gamma,\sigma')$ is unbounded, there exists $t_0>0$ such that
        \[f(t_0)\in\partial J(\gamma,\sigma')=J(\gamma,\sigma')\quad\text{and}\quad\forall t\in[0,t_0),~f(t)\in\Int J(\gamma,\sigma').\]
    Observing that $f(t_0)\in J(\gamma,\sigma')\setminus J(\gamma,\sigma)$ from $f(t_0)\in\Ext J(\gamma,\sigma)$, one can apply Lemma~\ref{lemma_local_identification} together with \eqref{eq_inclusion_match} to get an open neighborhood $U$ of $f(t_0)$ such that
        \[U\cap\Int J(\gamma,\sigma')=U\cap\Int J(\sigma,\sigma').\]
    Then, since $f(t)\in \Int J(\gamma,\sigma')$ for $t\in[0,t_0)$, one deduces by continuity of $f$ that there exists $t_0^-\in[0,t_0)$ such that
        \[f(t_0^-)\in\Int J(\sigma,\sigma').\]
    Finally, observing that $f([0,t_0])\subset\R^2\setminus J(\gamma,\sigma')$ avoids $J(\gamma,\sigma)$ and $J(\sigma,\sigma')\setminus J(\gamma,\sigma')\subset J(\gamma,\sigma)$ from Proposition~\ref{prop_arc_diff}, one gets that $f([0,t_0])$ avoids $J(\sigma,\sigma')$, hence $z=f(0)$ and $f(t_0)\in\Int J(\sigma,\sigma')$ must lie in the same component of $\R^2\setminus J(\sigma,\sigma')$, implying $z\in\Int J(\sigma,\sigma')\subset\clInt J(\sigma,\sigma')$.

    \emph{Case 2.} Now, assume $J(\gamma,\sigma)\not\subset \clInt J(\gamma,\sigma')$. Since one has also $J(\gamma,\sigma')\not\subset\clInt J(\gamma,\sigma)$, Lemma~\ref{lemma_inclusions_deduction} together with Corollary~\ref{coro_containment_equiv} gives
        \begin{equation}
            \label{eq_inclusion_match_crossed}
            \Int J(\sigma,\sigma')\subset\Ext J(\gamma,\sigma')\quad\text{and}\quad \Int(\gamma,\sigma')\subset\Ext J(\sigma,\sigma').
        \end{equation}
    Let $f:\R_+\to\R^2$ be an unbounded continuous path with $f(0)=z$. Since $\clInt J(\gamma,\sigma)\cup\clInt J(\gamma,\sigma')$ is bounded, there exists $t_0>0$ such that
        \[f(t_0)\in\partial[\clInt J(\gamma,\sigma)\cup\clInt J(\gamma,\sigma')]\subset\partial\clInt J(\gamma,\sigma)\cup\partial\clInt J(\gamma,\sigma')=J(\gamma,\sigma)\cup J(\gamma,\sigma').\]
    Assume by contradiction that $f(t_0)\notin J(\sigma,\sigma')$. Then, applying Lemma~\ref{lemma_local_identification} with \eqref{eq_inclusion_match_crossed}, there exists an open neighborhood $U$ of $f(t_0)$ such that either
        \[U\setminus J(\gamma,\sigma')=U\cap[\Int J(\gamma,\sigma)\cup\Int J(\gamma,\sigma')]\quad\text{or}\quad U\setminus J(\gamma,\sigma)=U\cap[\Int J(\gamma,\sigma)\cup\Int J(\gamma,\sigma')].\]
    Both cases yield $U\subset\clInt J(\gamma,\sigma)\cup\clInt J(\gamma,\sigma')$ and thus $f(t_0)\notin\partial[\clInt J(\gamma,\sigma)\cup\clInt J(\gamma,\sigma')]$, which is absurd. Therefore, one must have
        \[f(t_0)\in J(\sigma,\sigma').\]
    This shows that any unbounded continuous path started at $z$ must eventually cross $J(\sigma,\sigma')$, i.e., $z\in\Int J(\sigma,\sigma')\subset\clInt J(\sigma,\sigma')$, and thus $\clInt J(\gamma,\sigma')\setminus\clInt J(\gamma,\sigma)\subset\clInt J(\sigma,\sigma')$.
\end{proof}

The following proposition is the last step before proving Theorem~\ref{thm_planar_ordering}. It establishes the sought transitivity and trichotomy properties on $\cdot\in(\cdot,\cdot)_\Gamma$.

\begin{proposition}
    \label{prop_transitive_and_trichotomy}
    The relation $\cdot\in(\cdot,\cdot)_\Gamma$ is \emph{transitive}, that is, for all distinct $\gamma,\sigma',\sigma',\sigma''\in\Gamma$,
        \[\gamma\in(\sigma,\sigma')_\Gamma\setminus(\sigma,\sigma'')_\Gamma\implies\gamma\in (\sigma',\sigma'')_\Gamma.\]
    Furthermore, for all distinct $\sigma,\sigma',\sigma''\in\Gamma$, exactly one of the assertions $\sigma\in(\sigma',\sigma'')_\Gamma$, $\sigma'\in(\sigma,\sigma'')_\Gamma$ and $\sigma''\in(\sigma,\sigma')_\Gamma$ holds.
\end{proposition}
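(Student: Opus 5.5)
The plan is to work throughout with the pointwise reformulation of the betweenness relation obtained in the proof of Corollary~\ref{coro_containment_equiv}:
\[\sigma\in(\gamma,\gamma')_\Gamma\iff J(\gamma,\sigma)\subset\clInt J(\gamma,\gamma')\iff\sigma\setminus(\gamma\cup\gamma')\subset\Int J(\gamma,\gamma').\]
Here $\sigma\setminus(\gamma\cup\gamma')$ is a non-empty connected set avoiding $J(\gamma,\gamma')$. By Corollary~\ref{coro_containment_equiv}, a single point of $\sigma\setminus(\gamma\cup\gamma')$ lying in $\Int J(\gamma,\gamma')$ (resp.\ $\Ext J(\gamma,\gamma')$) already decides whether $\sigma\in(\gamma,\gamma')_\Gamma$. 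The main tools beyond this will be Proposition~\ref{prop_interior_diff} and Lemma~\ref{lemma_inclusions_deduction}. Proposition~\ref{prop_arc_diff} is used only to pick witness points, since it guarantees that $\gamma\setminus\bigcup_{\nu\in\Gamma_0}\nu\neq\emptyset$ for finite $\Gamma_0$.

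\emph{Transitivity.} Assume $\gamma\in(\sigma,\sigma')_\Gamma\setminus(\sigma,\sigma'')_\Gamma$ and pick $z\in\gamma\setminus(\sigma\cup\sigma'\cup\sigma'')$.
\begin{itemize}
\item Since $\gamma\in(\sigma,\sigma')_\Gamma$, we get $z\in\Int J(\sigma,\sigma')$.
\item Since $\gamma\notin(\sigma,\sigma'')_\Gamma$, the dichotomy above gives $z\in\Ext J(\sigma,\sigma'')$, so $z\notin\clInt J(\sigma,\sigma'')$.
\item Proposition~\ref{prop_interior_diff}, applied with base line $\sigma$ (in the roles $\gamma\to\sigma$, $\sigma\to\sigma''$, $\sigma'\to\sigma'$), gives
\[z\in\clInt J(\sigma,\sigma')\setminus\clInt J(\sigma,\sigma'')\subset\clInt J(\sigma'',\sigma').\]
\item As $z\notin\sigma'\cup\sigma''\supset J(\sigma',\sigma'')$, this means $z\in\Int J(\sigma',\sigma'')$.
\end{itemize}
Since $z\in\gamma\setminus(\sigma'\cup\sigma'')$, Corollary~\ref{coro_containment_equiv} then forces $\gamma\in(\sigma',\sigma'')_\Gamma$.

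\emph{At most one.} Suppose $\sigma\in(\sigma',\sigma'')_\Gamma$ and $\sigma'\in(\sigma,\sigma'')_\Gamma$; the other pairs are handled the same way after relabelling and using symmetry. From the first, $J(\sigma'',\sigma)\subset\clInt J(\sigma'',\sigma')$, so Lemma~\ref{lemma_inclusions_deduction} gives $\Int J(\sigma,\sigma'')\subset\Int J(\sigma',\sigma'')$. Now take $z\in\sigma'\setminus(\sigma\cup\sigma'')$. The second assumption puts $z$ in $\Int J(\sigma,\sigma'')$, hence in $\Int J(\sigma',\sigma'')$. But $z\in\sigma'\setminus\sigma''\subset J(\sigma',\sigma'')$, which is disjoint from that interior, a contradiction.

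\emph{At least one.} This is the step I expect to be the most delicate. I will argue by contradiction and assume that none of the three memberships holds.
\begin{itemize}
\item From $\sigma''\notin(\sigma,\sigma')_\Gamma$ and Corollary~\ref{coro_containment_equiv}, we get $J(\sigma,\sigma'')\subset\clExt J(\sigma,\sigma')$.
\item From $\sigma'\notin(\sigma,\sigma'')_\Gamma$, we get $J(\sigma,\sigma')\not\subset\clInt J(\sigma,\sigma'')$.
\item The second part of Lemma~\ref{lemma_inclusions_deduction} then yields $\Int J(\sigma,\sigma'')\subset\Ext J(\sigma,\sigma')$, so $\Int J(\sigma,\sigma'')\cap\clInt J(\sigma,\sigma')=\emptyset$.
\item Proposition~\ref{prop_interior_diff} with base $\sigma$ now gives
\[\Int J(\sigma,\sigma'')\subset\clInt J(\sigma,\sigma'')\setminus\clInt J(\sigma,\sigma')\subset\clInt J(\sigma',\sigma'').\]
\item Taking closures, $J(\sigma,\sigma'')\subset\clInt J(\sigma',\sigma'')$.
\end{itemize}
Any point of $\sigma\setminus(\sigma'\cup\sigma'')$ lies on $J(\sigma,\sigma'')$ but not on $J(\sigma',\sigma'')$, so it lies in $\Int J(\sigma',\sigma'')$. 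That gives $\sigma\in(\sigma',\sigma'')_\Gamma$, contradicting the assumption. The main care needed in this step is matching the argument orders of Proposition~\ref{prop_interior_diff} and Lemma~\ref{lemma_inclusions_deduction} correctly, using $J(\cdot,\cdot)$ symmetric in its two arguments.
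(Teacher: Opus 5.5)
Your proof is correct and follows essentially the same route as the paper. Transitivity uses a witness point in $\gamma\setminus(\sigma\cup\sigma'\cup\sigma'')$ together with Proposition~\ref{prop_interior_diff}, and the trichotomy comes from Lemma~\ref{lemma_inclusions_deduction} and Proposition~\ref{prop_interior_diff}; the only differences are cosmetic, namely a different choice of base line and, for the ``at most one'' part, a witness point where the paper compares the boundaries of nested closed interiors.
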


\begin{proof}
    Let $\gamma,\sigma',\sigma',\sigma''\in\Gamma$ be distinct. Assume that
        \[\gamma\in(\sigma,\sigma')_\Gamma\setminus(\sigma,\sigma'')_\Gamma.\]
    Then, using Corollary~\ref{coro_containment_equiv}, one must have $\gamma\in\clInt J(\sigma,\sigma')\cap\clExt J(\sigma,\sigma'')$, so that using Propositions~\ref{prop_arc_diff} and~\ref{prop_interior_diff},
        \[\emptyset\neq\gamma\setminus(\sigma\cup\sigma'\cup\sigma'')\subset\clInt J(\sigma,\sigma')\setminus\clInt J(\sigma,\sigma'')\subset\clInt J(\sigma',\sigma'').\]
    Now, since $\gamma\setminus(\sigma\cup\sigma'\cup\sigma'')$ is contained in $J(\sigma',\gamma)$ and avoids $J(\sigma',\sigma'')$, it follows from Corollary~\ref{coro_containment_equiv} that $J(\sigma',\gamma)\subset\clInt J(\sigma',\sigma'')$, i.e., $\gamma\in (\sigma',\sigma'')_\Gamma$. 
    
    It remains to check the second assertion. Let $\sigma',\sigma',\sigma''\in\Gamma$ be distinct. To show that exactly one of the assertions $\sigma\in(\sigma',\sigma'')_\Gamma$, $\sigma'\in(\sigma,\sigma'')_\Gamma$, and $\sigma''\in(\sigma,\sigma')_\Gamma$ holds, let us first check that at most one of them can hold. Assume $\sigma\in(\sigma',\sigma'')_\Gamma$. Then $J(\sigma,\sigma')\subset\clInt J(\sigma,\sigma'')$, so that $\clInt J(\sigma,\sigma')\subset\clInt J(\sigma,\sigma'')$ using Lemma~\ref{lemma_inclusions_deduction}. Now, as
        \[\partial \clInt J(\sigma,\sigma')=J(\sigma,\sigma')\neq J(\sigma,\sigma')=\partial\clInt J(\sigma,\sigma'')\]
    since $\emptyset\neq\sigma'\setminus(\sigma\cup\sigma'')\subset J(\sigma,\sigma')\setminus J(\sigma,\sigma'')$ from Proposition~\ref{prop_arc_diff}, it implies that
        \[\clInt J(\sigma,\sigma'')\not\subset\clInt J(\sigma,\sigma').\]
    Therefore, using Lemma~\ref{lemma_inclusions_deduction}, it comes $J(\sigma,\sigma'')\not\subset\clInt J(\sigma,\sigma')$, i.e., $\sigma''\notin(\sigma,\sigma')_\Gamma$. This shows
        \[\sigma'\in(\sigma,\sigma'')_\Gamma\implies \sigma''\notin(\sigma,\sigma')_\Gamma.\]
    Then, permuting the roles of $\sigma,\sigma',\sigma''$ and using the symmetry of $(\cdot,\cdot)_\Gamma$ from Corollary~\ref{coro_containment_equiv}, it follows that at most one of the assertions $\sigma\in(\sigma',\sigma'')_\Gamma$, $\sigma'\in(\sigma,\sigma'')_\Gamma$ and $\sigma''\in(\sigma,\sigma')_\Gamma$ can hold. It remains finally to check that at least one of them holds. If $\sigma\in(\sigma',\sigma'')_\Gamma$ or $\sigma'\in(\sigma,\sigma'')_\Gamma$, there is nothing to prove. One can then assume that
        \[\sigma\notin(\sigma',\sigma'')_\Gamma\quad\text{and}\quad \sigma'\notin(\sigma,\sigma'')_\Gamma.\]
    Then since using the symmetry of Corollary~\ref{coro_containment_equiv}, one gets
        \[J(\sigma,\sigma'')\not\subset\clInt J(\sigma',\sigma'')\quad\text{and}\quad J(\sigma,\sigma'')\not\subset\clInt J(\sigma',\sigma'').\]
    Therefore, Lemma~\ref{lemma_inclusions_deduction} gives that $\Int J(\sigma,\sigma'')\subset\Ext J(\sigma',\sigma'')$, and one can write
        \[\Int J(\sigma,\sigma'')\subset \clInt J(\sigma,\sigma'')\setminus\clInt J(\sigma',\sigma'')\subset\clInt J(\sigma,\sigma')\]
    using Proposition~\ref{prop_interior_diff}, so that taking the boundary, $J(\sigma,\sigma'')=\partial\Int J(\sigma,\sigma'')\subset\clInt J(\sigma,\sigma')$. This shows $\sigma''\in(\sigma,\sigma')_\Gamma$ and concludes the proof.
\end{proof}

Putting everything together, we finally prove Theorem~\ref{thm_planar_ordering}.

\begin{proof}[Proof of Theorem~\ref{thm_planar_ordering}]
    Proposition~\ref{prop_arc_diff} gives that for each distinct $\gamma,\gamma'\in\Gamma$, $J(\gamma,\gamma')$ is a Jordan curve. Then, Proposition~\ref{prop_transitive_and_trichotomy} and Corollary~\ref{coro_containment_equiv} ensure that the ternary relation induced on $\Gamma$ by $\cdot\in(\cdot,\cdot)_\Gamma$ setting $(\gamma,\gamma)_\Gamma\coloneq\emptyset$ for all $\gamma\in\Gamma$ satisfies the axioms of betweenness (see \cite{betweenness}). The existence and unicity, up to reversal, of an induced total order follows by construction of betweenness relations. Finally, $\sigma\notin(\gamma,\gamma')_\Gamma\iff J(\gamma,\sigma)\subset\clExt J(\gamma,\gamma')$ for all distinct $\sigma,\gamma,\gamma'\in\Gamma$ is given by Corollary~\ref{coro_containment_equiv}.
\end{proof}

\noindent\textbf{Acknowledgments.} The author warmly thanks his PhD advisors, David Coupier and Viet Chi Tran, for their continuous support throughout this work, as well as Maruf Alam Tarafdar for numerous stimulating discussions that greatly contributed to the development and refinement of the ideas. This research was partly supported by the ANR project \emph{GrHyDy} (ANR-20-CE40-0002), the CEFIPRA project \emph{Directed random networks and their scaling limits} (No.\ 6901), the CNRS RT \emph{MAIAGES} (Action 2179), and by a doctoral fellowship from ENS Paris.

\bibliographystyle{amsalpha}
\bibliography{bibliography}

\end{document}